\DeclareMathOperator{\hdim}{\dim_H}
\DeclareMathOperator{\realizable}{re}
\theoremstyle{plain}
\newtheorem{theorem}{Theorem}[section]
\newtheorem{lemma}[theorem]{Lemma}
\newtheorem{proposition}[theorem]{Proposition}
\theoremstyle{definition}
\newtheorem{example}[theorem]{Example}
\theoremstyle{remark}
\newtheorem{remarks}[theorem]{Remarks}
\begin{document}

\title[M{\"o}bius inversion for error-sum functions of continued fractions]{M{\"o}bius inversion and coprime summation for error-sum functions of continued fractions}

\author{Min Woong Ahn}
\address{Department of Mathematics Education, Silla University, 140, Baegyang-daero 700beon-gil, Sasang-gu, Busan, 46958, Republic of Korea}
\email{minwoong@silla.ac.kr}

\date{\today}

\subjclass[2020]{Primary 11A55, 11K50; Secondary 26A18, 28A80, 37E05, 33E20}
\keywords{Continued fraction, M\"obius inversion, error-sum function, Hausdorff dimension}

\begin{abstract}
We study the unweighted error-sum function $\mathcal{E}(x) \coloneqq \sum_{n \geq 0} ( x- p_n(x)/q_n(x) )$, where $p_n(x)/q_n(x)$ is the $n$th convergent of the continued fraction expansion of $x \in \mathbb{R}$. We prove that the Hausdorff dimension of the graph of $\mathcal{E}$ is exactly equal to $1$. Our proof is number-theoretic in nature and involves M\"obius inversion, summation over coprime convergent denominators, and precise upper bounds derived via continued fraction recurrence relations. As a supplementary result, we rederive the known upper bound of $3/2$ for the Hausdorff dimension of the graph of the relative error-sum function $P(x) \coloneqq \sum_{n \geq 0} (q_n(x)x-p_n(x))$.
\end{abstract}

\maketitle

\tableofcontents

\section{Introduction}

Given $x \in \mathbb{R}$, we denote the fractional part by $\{ x \} \coloneqq x - \lfloor x \rfloor \in [0,1)$, and set
\begin{align} \label{CF algorithm 1}
d_1 (x) \coloneqq \begin{cases} \lfloor 1/\{x\} \rfloor, &\text{if } x \neq 0; \\ \infty, &\text{if } x = 0, \end{cases}
\quad \text{and} \quad
T (x) \coloneqq \begin{cases} 1/\{x\} - d_1(x), &\text{if } x \neq 0; \\ 0, &\text{if } x = 0. \end{cases}
\end{align}
The map $T \colon [0,1) \to [0,1)$ is known as the {\em continued fraction (CF) transformation}. For each $n \in \mathbb{N}$, we define
\begin{align} \label{CF algorithm 2}
d_n (x) \coloneqq d_1 (T^{n-1}(x)),
\end{align}
where $d_n(x)$ is called the $n$th {\em digit} of the {\em CF expansion} of $x$. Setting $d_0(x) \coloneqq \lfloor x \rfloor$, the algorithms \eqref{CF algorithm 1} and \eqref{CF algorithm 2} yield the unique expansion $x = [d_0(x); d_1(x), d_2(x), \dotsc]$, where for any sequence $(\sigma_0, \sigma_1, \sigma_2, \dotsc) \in (\mathbb{R} \cup \{ \infty \})^{\{ 0 \} \cup \mathbb{N}}$, we define
\begin{align*}
[\sigma_0; \sigma_1, \sigma_2, \sigma_3, \dotsc] \coloneqq \sigma_0 + \cfrac{1}{\sigma_1 + \cfrac{1}{\sigma_2 + \cfrac{1}{\sigma_3 + \ddots}}},
\end{align*}
under the conventions $1/\infty = 0$, $\infty + \infty = \infty$, and $\infty + c = \infty$ for any $c \in \mathbb{R}$. If $x = [d_0(x); d_1(x), \dotsc, d_n(x), \infty, \infty, \dotsc]$ with $d_n(x) < \infty$, we say that $x$ has a {\em finite} CF expansion of {\em length $n$} and write $x = [d_0(x); d_1(x), \dotsc, d_n(x)]$. In this case, it must be that $d_n(x) > 1$ (see Proposition \ref{last digit} below).

We remark that the CF transformation ensures the uniqueness of the CF expansion. Without it, every rational number in $(0,1)$ would admit two distinct CF expansions. Indeed, if $x$ has a finite CF expansion of length $n$, then its last digit satisfies $d_n(x) = (d_n(x)-1) + \frac{1}{1}$, with $d_n(x)-1 \in \mathbb{N}$, which produces an alternative longer expansion. Therefore, throughout this paper, we use the term ``CF expansion'' to refer specifically to the expansion generated by the algorithms \eqref{CF algorithm 1} and \eqref{CF algorithm 2}.

In 2000, Ridley and Petruska \cite{RP00} studied the {\em error-sum function} of the CF expansions $P \colon \mathbb{R} \to \mathbb{R}$, defined by
\[
P( x ) \coloneqq \sum_{n \geq 0} (q_n(x)x-p_n(x)),
\]
where $( p_n(x) / q_n(x) )_{n \geq 0}$ denotes the sequence of {\em convergents} to $x \in \mathbb{R}$, given by
\[
\frac{p_n(x)}{q_n(x)} \coloneqq [d_0(x); d_1(x), d_2(x), \dotsc, d_n(x), \infty, \infty, \dotsc],
\]
with initial values $p_0(x) \coloneqq d_0(x)$, $q_0(x) \coloneqq 1$, and $\gcd (p_n(x), q_n(x)) = 1$ for all $n \geq 1$. They referred to $P$ as an error-sum function because each term $q_n(x)x-p_n(x)$ measures the approximation error of the $n$th convergent---namely, $x-p_n(x)/q_n(x)$---scaled relative to the spacing between adjacent rationals with denominator $q_n(x)$. Among other properties, they investigated the continuity structure of $P$ and the Hausdorff dimension of its graph. They also showed that the graph of $P$ exhibits a fractal-like appearance, although it lacks strict self-similarity. Despite the passage of more than two decades, the Hausdorff dimension of the graph has not been determined or improved beyond their original upper bound. This leaves a natural and compelling problem in the intersection of number theory and fractal geometry. For further developments on error-sum functions in CF expansions, see, e.g., \cite{Els11, Els14} by Elsner and \cite{ES11, ES12} by Elsner and Stein.

Inspired by Shen and Wu's work \cite{SW07} on the unweighted error-sum function of the L{\"u}roth series and our previous study \cite{Ahn23} on the Pierce expansion, we consider the analogous unweighted error-sum function associated with the CF expansion. This function $\mathcal{E} \colon \mathbb{R} \to \mathbb{R}$ is defined by
\[
\mathcal{E} (x) \coloneqq \sum_{n \geq 0} \left( x - \frac{p_n(x)}{q_n(x)} \right)
\]
for each $x \in \mathbb{R}$. To distinguish the two functions studied in this paper, we henceforth refer to $P$ as the {\em relative} error-sum function and $\mathcal{E}$ as the {\em unweighted} error-sum function.

In this paper, we determine the Hausdorff dimension of the graph of the unweighted error-sum function $\mathcal{E}$, and we provide a new derivation of a known upper bound for the graph of the relative error-sum function $P$. The main results are as follows.

\begin{theorem} \label{Hausdorff dimension of graph of E}
The graph of the unweighted error-sum function $\mathcal{E} \colon \mathbb{R} \to \mathbb{R}$ has Hausdorff dimension $1$.
\end{theorem}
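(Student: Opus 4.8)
The plan is to prove the two inequalities separately, with the lower bound essentially immediate and all of the work concentrated in the upper bound. Since $x\mapsto x+1$ preserves every continued fraction digit beyond the integer part, one has $p_n(x+1)=p_n(x)+q_n(x)$ and $q_n(x+1)=q_n(x)$, whence $\mathcal{E}(x+1)=\mathcal{E}(x)$; the graph over $\mathbb{R}$ is thus a countable union of horizontal translates of the graph over $[0,1)$, and it suffices to work on the unit interval. For the lower bound I would simply note that the coordinate projection $\pi(x,y)=x$ is $1$-Lipschitz and maps the graph onto $[0,1)$, and since Lipschitz maps do not increase Hausdorff dimension, $\hdim(\mathrm{graph})\ge\hdim([0,1))=1$. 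The entire difficulty therefore lies in showing $\hdim(\mathrm{graph})\le 1$, i.e.\ that $\mathcal{H}^{s}(\mathrm{graph})=0$ for every $s>1$.

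For the upper bound I would cover the graph by boxes indexed by continued fraction cylinders. Fix a cylinder $I=I(a_1,\dots,a_n)$ of rank $n$. On $I$ the convergents $p_k/q_k$ with $k\le n$ are constant, so $\sum_{k=0}^{n}(x-p_k/q_k)$ is affine in $x$ with slope $n+1$, while the tail $\sum_{k>n}(x-p_k/q_k)$ has oscillation $O(|I|)$: using $|x-p_k/q_k|<1/(q_kq_{k+1})$ together with the recurrence $q_k=a_kq_{k-1}+q_{k-2}$ and the resulting (at least Fibonacci) growth of the continuants along $I$, the tail is bounded uniformly on $I$ by a constant multiple of $q_n^{-2}\asymp|I|$. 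The same recurrence controls the jump discontinuities of $\mathcal{E}$ inside $I$: these occur at the rationals of rank $\ge n+1$, a continuant identity gives each such jump magnitude $1/(Q(Q-q_n))$ with $Q$ the relevant denominator, and summing over the sub-cylinder boundaries contributes another $O(|I|)$. Hence $\mathrm{osc}_I(\mathcal{E})=O\big((n+1)|I|\big)$, so the portion of the graph above $I$ is contained in $O(n+1)$ squares of side length $|I|$.

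It then remains to sum $(n+1)\,|I|^{s}$ over a genuine cover. For each threshold $N$ the cylinders satisfying $q_{n-1}<N\le q_n$ partition $[0,1)$ into intervals of length $|I|\asymp q_n^{-2}\le N^{-2}$, a legitimate $\delta$-cover with $\delta\asymp N^{-2}$. Each such cylinder is carried by its endpoint convergent $p_n/q_n$, a reduced fraction with $\gcd(p_n,q_n)=1$ and denominator $q_n\ge N$, and the Fibonacci growth yields $n+1=O(\log q_n)$. Organizing the cover sum by the value of the denominator $q=q_n$ and using that there are exactly $\phi(q)$ reduced fractions in $(0,1)$ with denominator $q$, I would obtain
\[
\sum_{I}(n+1)\,|I|^{s}\;\lesssim\;\sum_{q\ge N}\phi(q)\,(\log q)\,q^{-2s}.
\]
By M\"obius inversion $\phi=\mu*\mathrm{id}$, so $\sum_{q}\phi(q)q^{-2s}=\zeta(2s-1)/\zeta(2s)$ converges precisely when $s>1$; the logarithmic factor is harmless, and the tail over $q\ge N$ tends to $0$ as $N\to\infty$. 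This gives $\mathcal{H}^{s}(\mathrm{graph})=0$ for every $s>1$, hence $\hdim(\mathrm{graph})\le 1$.

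The main obstacle, and the step I expect to demand the most care, is the uniform oscillation estimate $\mathrm{osc}_I(\mathcal{E})=O\big((n+1)|I|\big)$: one must show that the slope-$(n+1)$ growth of the resolved part dominates, which forces cylinder-uniform control of both the analytic tail $\sum_{k>n}(x-p_k/q_k)$ and the accumulated jump heights, entirely through the continued fraction recurrences. A secondary subtlety is converting the denominator-organized estimate into a bona fide cover: one must verify that the stopping-time family $\{q_{n-1}<N\le q_n\}$ genuinely partitions $[0,1)$ and that each reduced fraction is counted boundedly often, so that the passage to the Euler totient and M\"obius summation is valid.
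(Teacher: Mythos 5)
Your proposal is correct, and while it shares the paper's skeleton (periodicity to reduce to $[0,1)$, the projection argument for the lower bound, and the key oscillation estimate $\operatorname{osc}_{I_\sigma}(\mathcal{E}) \asymp (n+1)\mathcal{L}(I_\sigma)$ leading to a cover by $O(n+1)$ squares of side $\mathcal{L}(I_\sigma)$ per cylinder), the final summation is organized genuinely differently. The paper fixes the rank $n$, covers the graph by \emph{all} rank-$n$ cylinders, sets $a_n \coloneqq \sum_{\sigma \in \Sigma_n} \mathcal{L}(I_\sigma)^{1+\varepsilon}$, and proves $\sum_n a_n < \infty$ by converting the double sum over $(q_n^*(\sigma), q_{n-1}^*(\sigma))$ into a sum over coprime pairs (each counted exactly twice) and applying M\"obius inversion; convergence then forces $\liminf_n (n+1)a_n = 0$, and the cover is taken along a subsequence of ranks. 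You instead use a stopping-time cover $\{q_{n-1} < N \le q_n\}$ of variable rank, pay a factor $n+1 = O(\log q_n)$ via the Fibonacci lower bound, and organize by the denominator $q_n$ using that each reduced fraction arises from at most two cylinders, arriving at the tail of $\sum_q \phi(q)(\log q)q^{-2s}$, convergent for $s>1$ by $\phi = \mu * \mathrm{id}$. The number-theoretic input is essentially the same ($\sum_k 1$ over $k$ coprime to $j$ versus $\phi(j)$ directly), but your route replaces the paper's slightly indirect $\liminf$ trick with a direct tail estimate on a single convergent series, at the cost of the harmless logarithmic factor; the paper's fixed-rank bookkeeping is what lets it reuse the identical machinery for the relative error-sum function $P$, where the analogous series only converges for $\varepsilon > 1/2$. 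Your two flagged ``subtleties'' are real but resolve exactly as you anticipate: the tail $\sum_{k>n}(x - p_k/q_k)$ is bounded pointwise by $O(\mathcal{L}(I_\sigma))$ since $q_kq_{k+1}$ at least doubles at each step (so the jump bookkeeping is in fact subsumed), and the stopping family is a genuine partition of the irrationals because $q_{m-1} \ge q_n \ge N$ for any deeper cylinder would violate its own stopping condition.
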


\begin{theorem}[{\cite[Theorem~3.2]{RP00}}] \label{Hausdorff dimension of graph of P}
The graph of the relative error-sum function $P \colon \mathbb{R} \to \mathbb{R}$ has Hausdorff dimension at most $3/2$.
\end{theorem}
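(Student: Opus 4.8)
The plan is to reduce to the unit interval and then cover the graph by continued fraction cylinders, estimating the oscillation of $P$ on each one. First I would observe that $P$ is $1$-periodic: adding $1$ to $x$ replaces $p_0$ by $p_0+1$ and, for $n\ge 1$, replaces $p_n$ by $p_n+q_n$ while leaving $q_n$ unchanged, so each summand $q_n(x)x-p_n(x)$ is unchanged and hence so is $P$. Since the graph over $\mathbb{R}$ is a countable union of integer translates of the graph over $[0,1)$, it suffices to bound $\hdim$ of the latter. For $n\ge 1$ and $(a_1,\dots,a_n)\in\mathbb{N}^n$, write $I_n=\{x\in[0,1):d_i(x)=a_i,\ 1\le i\le n\}$ for the rank-$n$ cylinder; these intervals partition $[0,1)$ up to the finitely many rationals of CF-length at most $n$, and each has length $\ell_n=\frac{1}{q_n(q_n+q_{n-1})}\asymp q_n^{-2}$, where $q_n=q_n(a_1,\dots,a_n)$.

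Next I would estimate the oscillation $\operatorname{osc}_{I_n}P=\sup_{I_n}P-\inf_{I_n}P$. Splitting $P=S_n+R_n$ with $S_n(x)=\sum_{k=0}^{n}(q_kx-p_k)$ and $R_n(x)=\sum_{k>n}(q_k(x)x-p_k(x))$, the head $S_n$ is affine on $I_n$ with slope $\sum_{k=0}^{n}q_k$, which is $\asymp q_n$ because the denominators grow at least geometrically ($q_k\ge q_{k-1}+q_{k-2}$ gives $q_{k-2}\le\tfrac12 q_k$); hence $\operatorname{osc}_{I_n}S_n\le Cq_n\cdot\ell_n\lesssim q_n^{-1}$. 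For the tail I would use the identity $q_kx-p_k=(-1)^k/(q_k\omega_k+q_{k-1})$ with $\omega_k=1/T^k(x)\ge 1$, so that $|q_kx-p_k|\le 1/q_k$ for $k>n$; the geometric growth of $(q_k)$ then yields $\sup_{I_n}|R_n|\lesssim 1/q_{n+1}\le 1/q_n$, whence $\operatorname{osc}_{I_n}R_n\lesssim q_n^{-1}$ as well. Combining the two estimates gives $\operatorname{osc}_{I_n}P\lesssim q_n^{-1}$, so the portion of the graph above $I_n$ is covered by $\lesssim \operatorname{osc}_{I_n}P/\ell_n\lesssim q_n$ squares of side $\ell_n$ and diameter $\asymp q_n^{-2}$.

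Summing the $s$-dimensional contribution over all rank-$n$ cylinders then produces $\sum_{(a_1,\dots,a_n)}q_n\cdot(q_n^{-2})^s\asymp\sum_{(a_1,\dots,a_n)}q_n^{1-2s}$. I would control this using two elementary facts: that $\sum_{(a_1,\dots,a_n)}q_n^{-2}\le 2$ (since $\ell_n\ge\tfrac12 q_n^{-2}$ and the $I_n$ partition $[0,1)$), and that $q_n\ge F_{n+1}$, the Fibonacci numbers. Writing $q_n^{1-2s}=q_n^{-2}\,q_n^{-(2s-3)}$ and pulling out the factor $q_n^{-(2s-3)}\le F_{n+1}^{-(2s-3)}$, valid once $s>3/2$, I obtain $\sum_{(a_1,\dots,a_n)}q_n^{1-2s}\le 2F_{n+1}^{-(2s-3)}\to 0$ as $n\to\infty$. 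Since all the covering squares have diameter at most $\sqrt2\,F_{n+1}^{-2}\to 0$, these are admissible covers whose $s$-sums tend to $0$; thus $\mathcal{H}^s$ of the graph vanishes for every $s>3/2$, and letting $s\downarrow 3/2$ gives $\hdim$ at most $3/2$.

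I expect the main obstacle to be the uniform control of the oscillation on a cylinder, and in particular bounding the tail $R_n$ independently of the unbounded tail digits $a_{n+1},a_{n+2},\dots$, which is where the recurrence $q_{k+1}=a_{k+1}q_k+q_{k-1}$ and the explicit formula for $q_kx-p_k$ are needed. The other delicate point is the bookkeeping that passes from the per-cylinder square count to the threshold exponent: the crux is recognizing that the decisive quantity $\sum_{(a_1,\dots,a_n)}q_n^{-\sigma}$ decays precisely when $\sigma>2$, which forces $2s-1>2$ and hence pins the bound at $3/2$.
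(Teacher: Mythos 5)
Your proposal is correct and reaches the same bound, but by a route that differs from the paper's in two substantive ways. First, your oscillation estimate on a cylinder is sharper: you bound the slope of the affine head by $\sum_{0\le k\le n}q_k\lesssim q_n$ using the geometric growth $q_{k-2}\le\tfrac12 q_k$, so the graph over $I_\sigma$ needs only $O(q_n)$ squares; the paper uses the cruder bound $\sum_{0\le k\le n}q_k^*\le(n+1)q_n^*$ (and in fact its closing remark singles out your sharper inequality as a ``promising direction''---though, as your analysis confirms, it does not move the exponent, since the bottleneck is the convergence of $\sum_{\sigma\in\Sigma_n}q_n^{1-2s}$ rather than the extra factor of $n$). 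Second, your counting step is more elementary: you control $\sum_{\sigma}q_n^{1-2s}=\sum_\sigma q_n^{-2}q_n^{3-2s}\le 2F_{n+1}^{3-2s}$ by combining the total-length bound $\sum_\sigma q_n^{-2}\le 2$ with the Fibonacci lower bound, showing directly that the level-$n$ cover sums tend to $0$ for $s>3/2$. The paper instead converts the sum over $\sigma$ into a sum over coprime pairs $(j,k)$, applies M\"obius inversion to express it via $\zeta(2\varepsilon)/\zeta(1+2\varepsilon)$, proves $\sum_n b_n<\infty$, and deduces $\liminf_n (n+1)b_n=0$; your argument is closer in spirit to the original Ridley--Petruska proof sketched in the paper's final remark. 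What the paper's M\"obius machinery buys is a uniform framework that also yields the exact dimension $1$ for the graph of $\mathcal{E}$; what your version buys is brevity and transparency about where the threshold $3/2$ comes from. Two cosmetic slips worth fixing: the points of $[0,1)$ not covered by rank-$n$ cylinders (rationals of CF-length $<n$, together with $0$) form a countably infinite set, not a finite one---this is harmless by countable stability of Hausdorff dimension, but should be stated that way; and in the identity $q_kx-p_k=(-1)^k/(q_k\omega+q_{k-1})$ the quantity $\omega$ is $1/T^k(x)=[d_{k+1};d_{k+2},\dots]$, i.e.\ your $\omega_{k+1}$ rather than $\omega_k$, which does not affect the bound $|q_kx-p_k|\le 1/q_{k+1}$.
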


Although Theorem \ref{Hausdorff dimension of graph of P} was originally established by Ridley and Petruska \cite{RP00}, we rederive it at the end of this paper using a distinct number-theoretic approach based on summation over convergent denominators and an application of M\"obius inversion. This method not only recovers the known upper bound for the graph of $P$, but also successfully yields the exact Hausdorff dimension of the graph of $\mathcal{E}$ (Theorem \ref{Hausdorff dimension of graph of E}). Our approach thus offers a new perspective and suggests two plausible directions for future research.
\begin{enumerate}[label = \upshape(\roman*), ref = \roman*, leftmargin=*, widest=ii]
\item
One possible direction is to further sharpen the present method in order to lower the current upper bound $3/2$.
\item
Another is to take the present confirmation of the upper bound as evidence that the actual Hausdorff dimension is $3/2$, and to pursue a proof that establishes this as an exact value.
\end{enumerate}

Moreover, although our main result concerns the function $\mathcal{E}(x)$, we view its greater significance in the context of the relative error-sum function $P(x)$. The Hausdorff dimension of the graph of $P(x)$ remains unknown and has attracted continued interest since the work of Ridley and Petruska \cite{RP00}. The analytic techniques developed in this paper yield a fresh derivation of the known upper bound, and we believe they offer a promising route toward resolving the open problem.

We denote by $\mathbb{N}$ the set of positive integers, by $\mathbb{N}_0$ the set of non-negative integers, and by $\mathbb{N}_\infty \coloneqq \mathbb{N} \cup \{ \infty \}$ the set of extended positive integers. Let $\mathbb{I} \coloneqq [0,1) \setminus \mathbb{Q}$ denote the set of irrational numbers in the interval $[0,1)$. The Fibonacci sequence $(F_n)_{n \in \mathbb{N}_0}$ is defined recursively by $F_0 \coloneqq 0$, $F_1 \coloneqq 1$, and $F_{n+2} \coloneqq F_{n+1} + F_n$ for all $n \in \mathbb{N}_0$. Binet’s formula provides the following closed-form expression.
\[
F_n = \frac{g^n - (-g)^{-n}}{\sqrt{5}} \quad \text{for each $n \in \mathbb{N}_0$},
\]
where $g \coloneqq (1+\sqrt{5})/2$ is the golden ratio. We denote by $\mathcal{L}$ the Lebesgue measure on $\mathbb{R}$, by $\mathcal{H}^s$ the $s$-dimensional Hausdorff measure, and by $\hdim$ the Hausdorff dimension.

\section{Preliminaries} \label{Preliminaries}

In this section, we recall some basic properties of continued fractions. Arithmetic properties can be found in classical references such as \cite{HW08, IK02, Khi97}, while topological properties of the CF digit coding map $x \mapsto (d_1(x), d_2(x), \dotsc)$ are drawn from \cite{Ahn25}.

The relative error-sum function $P \colon \mathbb{R} \to \mathbb{R}$ is periodic with period $1$ (\cite[Proposition~1.2]{RP00}). Likewise, the unweighted error-sum function $\mathcal{E} \colon \mathbb{R} \to \mathbb{R}$ is also $1$-periodic.

\begin{proposition} \label{E is periodic}
For any $x \in \mathbb{R}$, we have $\mathcal{E}(x+1) = \mathcal{E}(x)$.
\end{proposition}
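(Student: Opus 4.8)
The plan is to exploit the fact that translation by $1$ leaves the fractional part invariant, so that the continued fraction algorithm produces identical digits from index $1$ onward. Concretely, since $\{x+1\} = \{x\}$, the defining relations \eqref{CF algorithm 1} and \eqref{CF algorithm 2} give $T(x+1) = T(x)$ and hence $d_n(x+1) = d_n(x)$ for every $n \in \mathbb{N}$, while the zeroth digit shifts by one: $d_0(x+1) = \lfloor x+1 \rfloor = d_0(x) + 1$.

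First I would record the effect of this digit shift on the convergents. Using the standard recurrences $p_n(x) = d_n(x) p_{n-1}(x) + p_{n-2}(x)$ and $q_n(x) = d_n(x) q_{n-1}(x) + q_{n-2}(x)$ (with $p_{-1} = 1$, $q_{-1} = 0$, $p_0(x) = d_0(x)$, $q_0(x) = 1$), together with the fact that the denominators depend only on the digits $d_1(x), d_2(x), \dotsc$, I would prove by induction on $n$ that
\begin{align*}
q_n(x+1) = q_n(x) \quad \text{and} \quad p_n(x+1) = p_n(x) + q_n(x) \qquad \text{for all } n \in \mathbb{N}_0.
\end{align*}
The base case $n = 0$ is immediate from $p_0(x+1) = d_0(x) + 1 = p_0(x) + q_0(x)$ and $q_0(x+1) = 1 = q_0(x)$, and the inductive step follows by substituting the induction hypothesis into the recurrences and invoking $d_n(x+1) = d_n(x)$.

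With this transformation law in hand, periodicity reduces to a term-by-term computation. For each $n \in \mathbb{N}_0$,
\begin{align*}
(x+1) - \frac{p_n(x+1)}{q_n(x+1)} = (x+1) - \frac{p_n(x) + q_n(x)}{q_n(x)} = x - \frac{p_n(x)}{q_n(x)},
\end{align*}
so every summand in the series defining $\mathcal{E}(x+1)$ equals the corresponding summand for $\mathcal{E}(x)$; summing over $n \geq 0$ yields $\mathcal{E}(x+1) = \mathcal{E}(x)$. Since the classical inequality $|x - p_n(x)/q_n(x)| < 1/(q_n(x) q_{n+1}(x))$ forces absolute convergence of the defining series (the denominators growing at least like the Fibonacci numbers), there is no subtlety in comparing the two series term by term.

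The only point demanding genuine care, and the place I expect to be the main obstacle, is the boundary regime where the expansion is finite, i.e. when $x \in \mathbb{Q}$ and some digit equals $\infty$. For rational $x$ only finitely many summands are nonzero, and under the conventions $1/\infty = 0$ the convergent $p_n(x)/q_n(x)$ stabilizes at $x$; I would verify separately that the shift law $q_n(x+1) = q_n(x)$ and $p_n(x+1) = p_n(x) + q_n(x)$ persists in this degenerate case, so that the displayed identity still holds entrywise. Checking that the induction interacts correctly with the terminal digit is the one step that merits explicit attention, after which the computation above goes through unchanged and completes the argument.
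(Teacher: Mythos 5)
Your proof is correct and follows essentially the same route as the paper: both arguments rest on the fact that the digits $d_n$ for $n \geq 1$ are unchanged under integer translation, so each summand $x - p_n(x)/q_n(x)$ is invariant term by term (the paper phrases this as $\mathcal{E}(x) = \mathcal{E}(\{x\})$, you as a direct comparison of $x+1$ with $x$). Your explicit transformation law $q_n(x+1) = q_n(x)$, $p_n(x+1) = p_n(x) + q_n(x)$, proved by induction on the recurrences, is in fact a more careful rendering of the convergent relation that the paper only asserts in passing.
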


\begin{proof}
Let $x \in \mathbb{R}$, and write $x = \lfloor x \rfloor + \{ x \}$. Then $p_0(x) = \lfloor x \rfloor$ and $q_0(x) = 1$, while $p_0 ( \{ x \}) = 0$ and $q_0( \{ x \}) = 1$. Moreover, for all $n \geq 1$, we have $p_n(x) = p_n( \{ x \})$ and $q_n (x) = q_n (\{ x \})$. Hence,
\begin{align*}
\mathcal{E} (x)
&= \sum_{n \geq 0} \left( x - \frac{p_n(x)}{q_n(x)} \right) 
= ( x - \lfloor x \rfloor ) + \sum_{n \geq 1} \left( x - \frac{p_n(x)}{q_n(x)} \right) \\
&= \{ x \} + \sum_{n \geq 1} \left( x - \frac{p_n( \{ x \})}{q_n( \{ x \})} \right) \\
&= \left( \{ x \} - \frac{p_0 ( \{ x \})}{q_0( \{ x \})} \right) + \sum_{n \geq 1} \left( x - \frac{p_n( \{ x \})}{q_n( \{ x \})} \right) = \mathcal{E} (\{ x \}).
\end{align*}
Since $\{ x+1 \} = \{ x \}$ for any $x \in \mathbb{R}$, the result follows.
\end{proof}

In view of Proposition \ref{E is periodic}, we henceforth focus primarily on the half-open unit interval $[0,1)$. Moreover, since $d_0(x) = 0$ for all $x \in [0,1)$, we will adopt the shorthand notation $[d_1(x), d_2(x), \dotsc]$ in place of $[0; d_1(x), d_2(x), \dotsc]$ for convenience.

The following result is well known, but we include a proof to emphasize that permitting $\infty$ as a digit does not affect this basic property.

\begin{proposition} \label{last digit}
For any $x \in [0,1)$, if its CF expansion is of length $n \in \mathbb{N}$, then $d_n(x) > 1$.
\end{proposition}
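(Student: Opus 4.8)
The plan is to decode the hypothesis ``CF expansion of length $n$'' directly through the transformation $T$ and its defining formula \eqref{CF algorithm 1}. By definition, the expansion of $x$ having length $n$ means that $d_n(x) < \infty$ while $d_{n+1}(x) = \infty$; since $x \in [0,1)$ we have $d_0(x) = 0$, so the substance lies entirely in the digits $d_1, d_2, \dotsc$. Recalling from \eqref{CF algorithm 1} that $d_1(z) = \infty$ precisely when $z = 0$, together with the relation $d_k(x) = d_1(T^{k-1}(x))$ from \eqref{CF algorithm 2}, I would first translate these two conditions into statements about the forward orbit of $x$ under $T$: namely $T^{n-1}(x) \neq 0$ (encoding $d_n(x) < \infty$) and $T^n(x) = 0$ (encoding $d_{n+1}(x) = \infty$).

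Next, I would set $y \coloneqq T^{n-1}(x)$. Since $T$ maps $[0,1)$ into $[0,1)$ and $y \neq 0$, we have $y \in (0,1)$, so that $\{ y \} = y$ and $1/y > 1$. The key step is to exploit the relation $T(y) = T^n(x) = 0$. Unwinding the definition of $T$ in \eqref{CF algorithm 1}, the equation $T(y) = 1/\{y\} - d_1(y) = 0$ forces $1/y = d_1(y)$; in particular, $1/y$ is a positive integer.

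Finally, I would combine these observations. On the one hand, $d_n(x) = d_1(T^{n-1}(x)) = d_1(y) = \lfloor 1/y \rfloor$; on the other hand, the previous step shows $\lfloor 1/y \rfloor = 1/y$. Since $y \in (0,1)$ gives $1/y > 1$, and $1/y$ is an integer, we conclude $d_n(x) = 1/y \geq 2 > 1$, as required. I do not anticipate a genuine obstacle here: the argument is a short unwinding of definitions, and the only point demanding care is the bookkeeping with the convention $d_1(0) = \infty$, which is exactly what makes a finite expansion of length $n$ correspond to the orbit condition $T^{n-1}(x) \neq 0 = T^n(x)$.
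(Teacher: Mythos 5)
Your proof is correct and takes essentially the same approach as the paper's: both reduce the hypothesis to the orbit conditions $T^{n-1}(x)\neq 0$ and $T^n(x)=0$ and then unwind the definition of $T$. The only cosmetic difference is that you argue directly that $1/T^{n-1}(x)$ must be an integer $\geq 2$, whereas the paper runs the contrapositive, assuming $d_n(x)=1$ and deriving $T^n(x)\in(0,1)$.
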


\begin{proof}
This follows directly from the definitions \eqref{CF algorithm 1} and \eqref{CF algorithm 2}. Assume for contradiction that $d_n(x) = 1$. Since the expansion terminates at the $n$th digit, we must have $d_{n+1}(x) = \infty$, which implies $T^n(x) = 0$. But by definition of $T$, if $d_n(x)=1$, then $T^{n-1}(x) \in (1/2, 1)$, and hence $T^n(x) \in (0,1)$. This contradicts $T^n(x) = 0$, completing the proof.
\end{proof}

Define the {\em CF digit coding map} $f \colon [0,1) \to \mathbb{N}_\infty^\mathbb{N}$ by
\[
f(x) \coloneqq (d_k(x))_{k \in \mathbb{N}} = (d_1(x), d_2(x), d_3(x), \dotsc)
\]
for each $x \in [0,1)$. The function $f$ is clearly well defined by the algorithmic construction of the digits $d_k(x)$.

We now define a symbolic space closely related to the digits of CF expansions. This symbolic space, denoted by $\Sigma$, was extensively studied in our earlier work \cite{Ahn25} in connection with the topological properties of the mapping $f$. Let $\Sigma_0 \coloneqq \{ \infty \}^\mathbb{N}$, and for each $n \in \mathbb{N}$, define
\[
\Sigma_n \coloneqq \mathbb{N}^n \times \{ \infty \}^{\mathbb{N} \setminus \{ 1, \dotsc, n \}}.
\]
For notational convenience, we will occasionally write an element $(\sigma_k)_{k \in \mathbb{N}} \in \Sigma_n$ as a finite sequence $(\sigma_1, \dotsc, \sigma_n)$, identifying it with the infinite sequence $(\sigma_1, \dotsc, \sigma_n, \allowbreak \infty, \infty, \dotsc)$. Define also
\[
\Sigma_\infty \coloneqq \mathbb{N}^\mathbb{N},
\]
and set
\[
\Sigma \coloneqq \bigcup_{n \in \mathbb{N}_0} \Sigma_n \cup \Sigma_\infty.
\]
We refer to each $\sigma \in \Sigma$ as a {\em CF sequence}. By the construction of the CF expansion (see \eqref{CF algorithm 1} and \eqref{CF algorithm 2}), any digit sequence arising from a CF expansion belongs to $\Sigma$. A sequence $\sigma \coloneqq (\sigma_k)_{k \in \mathbb{N}} \in \Sigma$ is called {\em realizable} if there exists $x \in [0,1)$ such that $d_k(x) = \sigma_k$ for all $k \in \mathbb{N}$. Let $\Sigma_{\realizable} \subseteq \Sigma$ denote the set of all realizable CF sequences.

For a given $\sigma \coloneqq (\sigma_k)_{k \in \mathbb{N}} \in \Sigma$, and each $n \in \mathbb{N}$, define 
\[
\sigma^{(n)} \coloneqq (\tau_k)_{k \in \mathbb{N}}\in \Sigma
\quad \text{by} \quad
\tau_k
\coloneqq
\begin{cases}
\sigma_k, &\text{if } 1 \leq k \leq n; \\
\infty, &\text{if } k > n.
\end{cases}
\]
In other words, $\sigma^{(n)} = (\sigma_1, \dotsc, \sigma_n, \infty, \infty, \dots)$. For notational consistency, we also define $\sigma^{(0)} \coloneqq (\infty, \infty, \dotsc) \in \Sigma_0$.

Let $\sigma \coloneqq (\sigma_k)_{k \in \mathbb{N}} \in \Sigma$. Define $\varphi_0 (\sigma) \coloneqq 0$, and for each $k \in \mathbb{N}$, define
\[
\varphi_k (\sigma) \coloneqq [\sigma_1, \sigma_2, \dotsc, \sigma_k, \infty, \infty, \dotsc].
\]
We then define the map $\varphi \colon \Sigma \to [0,1]$ by
\[
\varphi (\sigma) \coloneqq \lim_{k \to \infty} \varphi_k (\sigma)
\]
for each $\sigma \in \Sigma$. For any $\sigma \in \bigcup_{n \in \mathbb{N}_0} \Sigma_n$, there exists $m \in \mathbb{N}_0$ such that $\varphi_n(\sigma) = \varphi_m(\sigma)$ for all $n \geq m$; hence, the limit on the right-hand side exists.

\begin{proposition} [{\cite[Proposition~1.1.2]{IK02}}] \label{definition of phi n}
Let $\sigma \in \Sigma_\infty$. Then the limit $\varphi (\sigma) \coloneqq \lim_{k \to \infty} \varphi_k (\sigma)$ exists and satisfies $\varphi (\sigma) \in (0,1) \setminus \mathbb{Q}$. Moreover, the CF expansion digits of $\varphi (\sigma)$ are precisely given by $\sigma$, i.e., $\sigma = f(\varphi (\sigma))$.
\end{proposition}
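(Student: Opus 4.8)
The plan is to analyze the convergents $p_k/q_k \coloneqq \varphi_k(\sigma) = [\sigma_1, \dotsc, \sigma_k]$ by means of the classical three-term recurrences together with the determinant identity. Setting $p_{-1} \coloneqq 1$, $q_{-1} \coloneqq 0$, $p_0 \coloneqq 0$, $q_0 \coloneqq 1$, one has $p_k = \sigma_k p_{k-1} + p_{k-2}$ and $q_k = \sigma_k q_{k-1} + q_{k-2}$ for $k \geq 1$, and hence $p_k q_{k-1} - p_{k-1} q_k = (-1)^{k-1}$. Since every $\sigma_k \geq 1$, a routine induction gives $q_k \geq F_k$, so $q_k \to \infty$, and the determinant identity yields
\[
\varphi_k(\sigma) - \varphi_{k-1}(\sigma) = \frac{(-1)^{k-1}}{q_k q_{k-1}}, \qquad \left| \varphi_k(\sigma) - \varphi_{k-1}(\sigma) \right| \leq \frac{1}{F_k F_{k-1}}.
\]

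From here the existence of the limit is immediate: the right-hand side is summable (by Binet's formula, $F_k F_{k-1}$ grows geometrically), so $(\varphi_k(\sigma))_k$ is Cauchy and $\varphi(\sigma)$ exists. The alternating sign in the displayed identity shows that the even-indexed convergents increase while the odd-indexed ones decrease, and since $q_k$ is strictly increasing for $k \geq 2$ the convergents are pairwise distinct; thus $\varphi(\sigma)$ lies strictly between any two consecutive convergents. In particular $0 = \varphi_0(\sigma) < \varphi_2(\sigma) < \varphi(\sigma) < \varphi_1(\sigma) = 1/\sigma_1 \leq 1$, which gives $\varphi(\sigma) \in (0,1)$.

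The decisive step is to record the self-similarity of $\varphi$. Writing $\sigma' \coloneqq (\sigma_{k+1})_{k \in \mathbb{N}} \in \Sigma_\infty$ for the shifted sequence and passing to the limit in the finite-level relation $[\sigma_1, \dotsc, \sigma_k] = 1/(\sigma_1 + [\sigma_2, \dotsc, \sigma_k])$, I would obtain
\[
\varphi(\sigma) = \frac{1}{\sigma_1 + \varphi(\sigma')}.
\]
Applying the previous paragraph to $\sigma'$ gives $\varphi(\sigma') \in (0,1)$, so $1/\varphi(\sigma) = \sigma_1 + \varphi(\sigma') \in (\sigma_1, \sigma_1 + 1)$, and therefore, by \eqref{CF algorithm 1} and $\varphi(\sigma) \in (0,1)$, one has $d_1(\varphi(\sigma)) = \lfloor 1/\varphi(\sigma) \rfloor = \sigma_1$ and $T(\varphi(\sigma)) = \varphi(\sigma')$. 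Iterating this relation and invoking \eqref{CF algorithm 2} gives $d_{n+1}(\varphi(\sigma)) = d_1(T^n(\varphi(\sigma))) = \sigma_{n+1}$ for every $n \geq 0$, i.e., $f(\varphi(\sigma)) = \sigma$. Finally, because all digits $\sigma_k$ are finite, the CF expansion of $\varphi(\sigma)$ never terminates; since every rational number in $[0,1)$ has a finite CF expansion, $\varphi(\sigma)$ must be irrational, completing the proof that $\varphi(\sigma) \in (0,1) \setminus \mathbb{Q}$.

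The main obstacle will be the self-similarity identity together with its inductive use. Taking the limit through the continued-fraction operation needs justification, and---more importantly---the digit-recovery induction depends on having the strict bound $\varphi(\tau) \in (0,1)$ for \emph{every} tail $\tau$ of $\sigma$, not merely for $\sigma$ itself, since it is precisely this strictness that forces the floor in \eqref{CF algorithm 1} to return $\sigma_1$ rather than $\sigma_1 \pm 1$. By contrast, the convergence of $(\varphi_k(\sigma))_k$ and the Fibonacci denominator estimates are routine.
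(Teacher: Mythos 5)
Your proof is correct. Note that the paper itself does not prove this proposition at all; it is imported wholesale from Iosifescu--Kraaikamp \cite[Proposition~1.1.2]{IK02}, so there is no in-paper argument to compare against. What you have written is a complete, self-contained derivation along the standard lines: recurrences and the determinant identity give convergence, the alternating-series structure with strictly decreasing increments gives the strict enclosure $\varphi_0(\sigma) < \varphi(\sigma) < \varphi_1(\sigma)$, and the self-similarity identity $\varphi(\sigma) = 1/(\sigma_1 + \varphi(\sigma'))$ applied to every tail recovers the digits one at a time. You correctly isolate the one genuinely delicate point, namely that the strict bound $\varphi(\tau) \in (0,1)$ for every shifted tail $\tau$ is what pins down the floor in \eqref{CF algorithm 1}. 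Two minor blemishes: first, your bound $\lvert \varphi_k(\sigma) - \varphi_{k-1}(\sigma)\rvert \leq 1/(F_k F_{k-1})$ is vacuous at $k=1$ since $F_0 = 0$; using the sharper estimate $q_k \geq F_{k+1}$ (which is what the paper records in Proposition \ref{metric properties}(\ref{metric properties(iii)})) or simply restricting to $k \geq 2$ repairs this, and summability of the tail is all you need anyway. Second, the final irrationality step leans on the unproved assertion that every rational in $[0,1)$ has a finite CF expansion; this is the classical termination of the Euclidean algorithm and is fair to cite, though an alternative that avoids it entirely is to note that $\lvert \varphi(\sigma) - p_k/q_k \rvert < 1/(q_k q_{k+1})$ forces $q_{k+1} < b$ for all large $k$ if $\varphi(\sigma) = a/b$ were rational, contradicting $q_k \to \infty$.
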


It is clear that $\varphi_n (\sigma)$ is rational for each $\sigma \in \Sigma$ and $n \in \mathbb{N}$. Define $p_0^*(\sigma) \coloneqq 0$ and $q_0^*(\sigma) \coloneqq 1$, and write
\[
\varphi_n (\sigma) \coloneqq \frac{p_n^* (\sigma)}{q_n^* (\sigma)},
\]
where $\gcd(p_n^*(\sigma), q_n^*(\sigma)) = 1$ for each $n \in \mathbb{N}$. We also define $p_{-1}^*(\sigma) \coloneqq 1$ and $q_{-1}^*(\sigma) \coloneqq 0$.

We now summarize several standard properties of the convergents, which will be frequently used throughout the paper.

\begin{proposition} [See \cite{Ahn25, HW08, IK02, Khi97, RP00}] \label{metric properties}
For any $\sigma \coloneqq (\sigma_k)_{k \in \mathbb{N}} \in \Sigma$, the following hold.
\begin{enumerate}[label = \upshape(\roman*), ref = \roman*, leftmargin=*, widest=iii]
\item \label{metric properties(i)}
If $\sigma \in \Sigma_\infty$, then 
\begin{align*}
p_k^* (\sigma) = \sigma_k p_{k-1}^* (\sigma) + p_{k-2}^* (\sigma), \quad q_k^* (\sigma) = \sigma_k q_{k-1}^* (\sigma) + q_{k-2}^* (\sigma)
\end{align*}
for each $k \in \mathbb{N}$.

If $\sigma \in \Sigma_n$ for some $n \in \mathbb{N}$, then the same equalities hold for all $1 \leq k \leq n$, and
\[
p_k^* (\sigma) = p_n^* (\sigma), \quad q_k^* (\sigma) = q_n^* (\sigma) 
\]
for all $k \geq n$.

\item \label{metric properties(ii)}
Suppose $\sigma \coloneqq (\sigma_1, \dotsc, \sigma_{n-1}, \sigma_n) \in \Sigma_n$ for some $n \in \mathbb{N}$ with $\sigma_n > 1$, and define 
\[
\sigma' \coloneqq (\sigma_1, \dotsc, \sigma_{n-1}, \sigma_n-1, 1) \in \Sigma_{n+1}.
\]
Then
\[
\begin{cases}
p_k^* (\sigma') = p_k^* (\sigma), \\
p_n^* (\sigma') = p_n^* (\sigma) - p_{n-1}^* (\sigma), \\
p_k^* (\sigma') = p_n^* (\sigma),
\end{cases}
\hspace{-0.5cm}
\begin{array}{l}
q_k^* (\sigma') = q_k^* (\sigma) \\
q_n^* (\sigma') = q_n^* (\sigma) - q_{n-1}^* (\sigma); \\
q_k^* (\sigma') = q_n^* (\sigma)
\end{array}
\begin{array}{l}
\text{for } 1 \leq k \leq n-1; \\
\\
\text{for } k \geq n+1.
\end{array}
\]

\item \label{metric properties(iii)}
If $\sigma \in \Sigma_\infty$, then $q_k^* (\sigma) \geq F_{k+1}$ for each $k \in \mathbb{N}_0$.

If $\sigma \in \Sigma_n$ for some $n \in \mathbb{N}$, then the same inequality holds for all $0 \leq k \leq n$.

\item \label{metric properties(iv)}
If $\sigma \in \Sigma_\infty$, then 
\[
p_{k+1}^*(\sigma) q_k^*(\sigma) - p_k^*(\sigma) q_{k+1}^*(\sigma) = (-1)^k
\]
for each $k \geq -1$.

If $\sigma \in \Sigma_n$ for some $n \in \mathbb{N}$, then the same identity holds for all $-1 \leq k \leq n-1$.

\item \label{metric properties(v)}
If $\sigma \in \Sigma_\infty$, then
\[
\frac{1}{q_k^*(\sigma) (q_{k+1}^*(\sigma) + q_k^*(\sigma))} < (-1)^k ( \varphi (\sigma) - \varphi_k (\sigma) ) < \frac{1}{q_k^* (\sigma) q_{k+1}^*(\sigma)}
\]
for each $k \in \mathbb{N}_0$.

If $\sigma \in \Sigma_n$ for some $n \in \mathbb{N}$, then the inequality holds for each $1 \leq k \leq n-2$, and
\[
\varphi (\sigma) - \varphi_{n-1} (\sigma) = \frac{(-1)^{n-1}}{q_{n-1}^* (\sigma) q_n^*(\sigma)}.
\]

In either case,
\[
0 \leq (-1)^k (\varphi (\sigma) - \varphi_k (\sigma)) q_k^* (\sigma) \leq \frac{1}{F_{k+2}} \leq \frac{1}{g^k}
\]
for all $k \in \mathbb{N}_0$.
\end{enumerate}
\end{proposition}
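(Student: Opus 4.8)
The plan is to take the three-term recurrence in part~(i) as the foundation and to deduce parts~(ii)--(v) from it by induction and elementary manipulation, treating the finite-expansion case (in which $\infty$ eventually occurs as a digit) uniformly with the infinite case throughout.

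First I would prove part~(i). For $\sigma \in \Sigma_\infty$ this is the classical recurrence for continued fraction convergents, which I would establish by induction on $k$ via the matrix identity
\[
\begin{pmatrix} p_k^*(\sigma) & p_{k-1}^*(\sigma) \\ q_k^*(\sigma) & q_{k-1}^*(\sigma) \end{pmatrix}
=
\begin{pmatrix} p_{k-1}^*(\sigma) & p_{k-2}^*(\sigma) \\ q_{k-1}^*(\sigma) & q_{k-2}^*(\sigma) \end{pmatrix}
\begin{pmatrix} \sigma_k & 1 \\ 1 & 0 \end{pmatrix},
\]
whose base case is fixed by the initial values $p_{-1}^* = q_0^* = 1$ and $p_0^* = q_{-1}^* = 0$. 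For $\sigma \in \Sigma_n$ the identical computation yields the recurrence for $1 \leq k \leq n$, while for $k > n$ the convention $1/\infty = 0$ forces $\varphi_k(\sigma) = \varphi_n(\sigma)$, so that $p_k^*(\sigma) = p_n^*(\sigma)$ and $q_k^*(\sigma) = q_n^*(\sigma)$; verifying this stabilization is precisely what guarantees that admitting $\infty$ as a digit does no harm.

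With part~(i) available, part~(iv) follows at once by taking determinants of the matrix product above, each factor $\bigl(\begin{smallmatrix} \sigma_k & 1 \\ 1 & 0\end{smallmatrix}\bigr)$ contributing a factor of $-1$, and the Fibonacci bound (iii) follows by a second induction from $q_k^* = \sigma_k q_{k-1}^* + q_{k-2}^* \geq q_{k-1}^* + q_{k-2}^*$ together with $q_0^* = 1 = F_1$ and $q_1^* = \sigma_1 \geq 1 = F_2$. The splitting identity (ii) is then a direct unwinding of the recurrence for $\sigma' = (\sigma_1, \dotsc, \sigma_{n-1}, \sigma_n - 1, 1)$: the first $n-1$ convergents coincide because the leading digits agree, replacing $\sigma_n$ by $\sigma_n - 1$ subtracts $(p_{n-1}^*, q_{n-1}^*)$ at level $n$, appending the digit $1$ at level $n+1$ restores $(p_n^*, q_n^*)$, and stabilization handles $k \geq n+1$.

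Finally, for the approximation estimates (v) I would substitute the real tail $\xi_{k+1} \coloneqq [\sigma_{k+1}; \sigma_{k+2}, \dotsc]$ into the recurrence to obtain the exact identity
\[
\varphi(\sigma) - \varphi_k(\sigma) = \frac{(-1)^k}{q_k^*(\sigma)\,\bigl(\xi_{k+1}\,q_k^*(\sigma) + q_{k-1}^*(\sigma)\bigr)},
\]
whose numerator is computed using part~(iv) and whose existence for $\sigma \in \Sigma_\infty$ rests on Proposition \ref{definition of phi n}. Since $\sigma_{k+1} < \xi_{k+1} < \sigma_{k+1} + 1$, the denominator lies strictly between $q_k^* q_{k+1}^*$ and $q_k^*(q_{k+1}^* + q_k^*)$, which yields both the sign $(-1)^k$ and the two-sided bound; the uniform estimate then comes from multiplying by $q_k^*$, applying (iii), and comparing Fibonacci numbers with powers of $g$ through Binet's formula. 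The main obstacle I anticipate is bookkeeping the finite case coherently: for $\sigma \in \Sigma_n$ the tail $\xi_{k+1}$ is itself a finite continued fraction, and at the boundary index $k = n-1$ it degenerates to the single digit $\sigma_n$, collapsing the denominator to exactly $q_k^* q_n^*$ and turning the strict inequality into the stated exact equality---so the monotonicity and range arguments must be phrased to specialize correctly there.
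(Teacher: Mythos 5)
The paper does not actually prove this proposition---it is presented as a summary of classical facts with citations to Hardy--Wright, Iosifescu--Kraaikamp, Khinchin, and the author's earlier work---and your argument is precisely the standard development found in those references (matrix recurrence, determinant identity, Fibonacci induction, tail substitution $\xi_{k+1}$), so it is correct and in effect the same approach the paper is deferring to. The one point worth making explicit is that the paper defines $p_k^*(\sigma)$ and $q_k^*(\sigma)$ as the numerator and denominator of $\varphi_k(\sigma)$ in lowest terms, so you need the determinant identity of part (iv) to see that the recurrence-defined quantities are coprime and therefore coincide with $p_k^*(\sigma)$ and $q_k^*(\sigma)$; your ordering, which establishes (iv) immediately after (i) from the same matrix product, accommodates this.
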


It is well known that every non-integer rational number has two distinct CF expansions in the classical sense. Specifically, if $x \in (0,1) \cap \mathbb{Q}$, then
\[
[d_1(x), \dotsc, , d_{n-1}(x), d_n(x)]
=
[d_1(x), \dotsc, d_{n-1}(x), d_n(x)-1, 1].
\]
(See, e.g., \cite[Theorem~162]{HW08}.) Note that the sequence $(d_1(x), \dotsc, d_n(x)-1, 1)$ appearing in the second expansion is a non-realizable CF sequence in our framework, due to Proposition \ref{last digit}.

The following proposition restates this classical result in terms of the maps $f$ and $\varphi$, and describes the structure of the preimage of a real number under $\varphi$, distinguishing between the rational and irrational cases.

\begin{proposition} [{\cite[Proposition~2.4]{Ahn25}}] \label{inverse image of phi}
Let $x \in [0,1)$. Then the following hold.
\begin{enumerate}[label = \upshape(\roman*), ref = \roman*, leftmargin=*, widest=ii]
\item \label{inverse image of phi(i)}
If $x \in (0,1) \cap \mathbb{Q}$, then we have $\varphi^{-1}(\{ x \}) = \{ \sigma, \sigma' \}$, where
\begin{align*}
\sigma &\coloneqq (d_1(x), d_2(x), \dotsc, d_{n-1}(x), d_n(x)) = f(x) \in \Sigma_n \cap \Sigma_{\realizable}, \\
\sigma' &\coloneqq (d_1(x), d_2(x), \dotsc, d_{n-1}(x), d_n(x)-1, 1) \in \Sigma_{n+1} \setminus \Sigma_{\realizable},
\end{align*}
for some $n \in \mathbb{N}$.
\item \label{inverse image of phi(ii)}
If $x \in \{ 0 \} \cup \mathbb{I}$, then we have $\varphi^{-1}(\{ x \}) = \{ \sigma \}$, where $\sigma \coloneqq f(x)$. More precisely, $\sigma \in \Sigma_\infty$ if $x \in \mathbb{I}$, and $\sigma = (\infty, \infty, \dotsc) \in \Sigma_0$ if $x=0$.
\end{enumerate}
\end{proposition}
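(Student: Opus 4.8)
The plan is to treat the three cases separately — $x \in \mathbb{I}$, $x = 0$, and $x \in (0,1) \cap \mathbb{Q}$ — exploiting the dichotomy in the behavior of $\varphi$ on finite versus infinite CF sequences. The organizing principle is that $\varphi$ sends every finite sequence in $\bigcup_{n \in \mathbb{N}_0} \Sigma_n$ to a rational (since $\varphi(\sigma) = \varphi_n(\sigma) = p_n^*(\sigma)/q_n^*(\sigma)$ for $\sigma \in \Sigma_n$), whereas by Proposition \ref{definition of phi n} it sends every sequence in $\Sigma_\infty$ to an irrational in $(0,1)$. This immediately separates which strata of $\Sigma$ can possibly lie in $\varphi^{-1}(\{x\})$.

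For $x \in \mathbb{I}$, I would argue that any $\sigma \in \varphi^{-1}(\{x\})$ must lie in $\Sigma_\infty$, since a finite $\sigma \in \Sigma_n$ would force $\varphi(\sigma)$ to be rational, contradicting the irrationality of $x$; Proposition \ref{definition of phi n} then yields $\sigma = f(\varphi(\sigma)) = f(x)$, giving the singleton preimage. For $x = 0$, I would note that a positive-length finite sequence produces $[\sigma_1, \dots, \sigma_n] = 1/(\sigma_1 + \cdots) > 0$ and an infinite sequence produces an irrational, so only the all-$\infty$ sequence $\sigma^{(0)} \in \Sigma_0$ can map to $0$; since it plainly does, $\varphi^{-1}(\{0\}) = \{f(0)\} = \{(\infty, \infty, \dotsc)\}$.

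The substantive case is $x \in (0,1) \cap \mathbb{Q}$. Writing $\sigma = f(x) \in \Sigma_n$, Proposition \ref{last digit} gives $d_n(x) > 1$, so the modified sequence $\sigma' = (d_1(x), \dotsc, d_n(x)-1, 1) \in \Sigma_{n+1}$ is defined. I would verify $\varphi(\sigma) = x$ directly, and then apply Proposition \ref{metric properties}(\ref{metric properties(ii)}): it gives $p_k^*(\sigma') = p_n^*(\sigma)$ and $q_k^*(\sigma') = q_n^*(\sigma)$ for all $k \geq n+1$, whence $\varphi(\sigma') = p_n^*(\sigma)/q_n^*(\sigma) = x$, so $\sigma'$ also lies in the preimage. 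Since $\sigma'$ terminates in the digit $1$, Proposition \ref{last digit} forces $\sigma' \notin \Sigma_{\realizable}$, while $\sigma = f(x) \in \Sigma_{\realizable}$ by construction. To close the case I would show there is no third preimage: any $\tau \in \varphi^{-1}(\{x\})$ must be finite (an infinite sequence would give an irrational value), so $\varphi(\tau) = [\tau_1, \dotsc, \tau_m]$ is an ordinary finite continued fraction equal to $x$, and the classical uniqueness theorem for finite CF expansions of a rational (\cite[Theorem~162]{HW08}) exhibits exactly two such expansions — one with last partial quotient exceeding $1$, one ending in $1$ — which must coincide with $\sigma$ and $\sigma'$.

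The main obstacle I anticipate is this final step: confirming that $\varphi^{-1}(\{x\})$ contains nothing beyond $\sigma$ and $\sigma'$ and attaching the correct realizability labels. This requires a careful transfer of the classical two-expansion theorem into the present symbolic setting, verifying both that every finite CF sequence in $\Sigma$ with last digit $> 1$ is the canonical (realizable) expansion of its value and that the unique non-canonical finite representation is the one terminating in $1$. Some attention is also needed to rule out representations of differing lengths collapsing to the same rational; this can be controlled by the coprimality normalization $\gcd(p_n^*, q_n^*) = 1$ together with the monotone denominator growth $q_k^*(\sigma) \geq F_{k+1}$ from Proposition \ref{metric properties}(\ref{metric properties(iii)}).
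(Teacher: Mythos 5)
The paper does not prove this proposition at all: it is imported verbatim from \cite[Proposition~2.4]{Ahn25}, so there is no internal argument to compare yours against. Judged on its own, your reconstruction is sound. The stratification of $\Sigma$ by where $\varphi$ lands (finite strata $\mapsto$ rationals, $\Sigma_\infty \mapsto$ irrationals via Proposition \ref{definition of phi n}) correctly disposes of the cases $x \in \mathbb{I}$ and $x = 0$, and in the rational case the membership of $\sigma$ and $\sigma'$ in $\varphi^{-1}(\{x\})$ follows as you say from Proposition \ref{metric properties}(\ref{metric properties(ii)}). Two points deserve emphasis. First, you correctly avoid a circularity trap: the realizability labels must come from Proposition \ref{last digit} (proved directly from the algorithm) and from the tautology that $f(x)$ is realizable, \emph{not} from Proposition \ref{CF realizable sequence}, since the paper derives that characterization from the very proposition you are proving. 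Second, the exhaustiveness step --- that no third finite sequence $\tau$ satisfies $\varphi(\tau) = x$ --- genuinely does rest on the classical two-representation theorem for finite continued fractions (\cite[Theorem~162]{HW08}), which the paper itself quotes just before the statement; one should only add the small normalization check that for $x \in (0,1)$ the integer part $d_0 = 0$ is fixed, so the classical count of exactly two representations $[0; \tau_1, \dotsc, \tau_m]$ with $\tau_k \in \mathbb{N}$ transfers directly to the symbolic setting, and that $\Sigma_0$ is excluded since $\varphi(\sigma^{(0)}) = 0 \neq x$. With those details made explicit, the argument is complete.
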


The next proposition provides a complete characterization of realizable CF sequences within the symbolic space $\Sigma$. It encapsulates Proposition \ref{last digit} as a necessary condition for realizability of finite sequences. The result follows directly from Proposition \ref{inverse image of phi}, and the proof is omitted.

\begin{proposition} \label{CF realizable sequence}
Let $\sigma \in \Sigma$. Then $\sigma \in \Sigma_{\realizable}$ if and only if one of the following holds.
\begin{enumerate}[label = \upshape(\roman*), ref = \roman*, leftmargin=*, widest=iii]
\item
$\sigma \in \Sigma_0$.
\item
$\sigma \in \Sigma_n$ for some $n \in \mathbb{N}$ with $\sigma_n > 1$.
\item
$\sigma \in \Sigma_\infty$.
\end{enumerate}
\end{proposition}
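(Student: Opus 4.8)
The plan is to establish both implications by a case analysis on the trichotomy of $x \in [0,1)$ (namely $x = 0$, $x \in \mathbb{I}$, or $x \in (0,1) \cap \mathbb{Q}$), with Proposition~\ref{inverse image of phi} supplying the structure of the fibers $\varphi^{-1}(\{x\})$ and Proposition~\ref{last digit} disposing of the rational case. The organizing observation is that $\sigma \in \Sigma_{\realizable}$ holds exactly when $\sigma = f(x)$ for some $x \in [0,1)$, and that $\varphi(f(x)) = x$ for every such $x$ (the limit defining $\varphi$ stabilizes at $\varphi_n$ when $f(x) \in \Sigma_n$, while Proposition~\ref{definition of phi n} covers the irrational case and $x = 0$ the case $\Sigma_0$). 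Thus $f(x)$ is always the distinguished realizable member of the fiber $\varphi^{-1}(\{x\})$, and the task reduces to reading off, from Proposition~\ref{inverse image of phi}, which stratum $\Sigma_0$, $\Sigma_n$, or $\Sigma_\infty$ that member lands in.

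For the forward direction I would assume $\sigma = f(x)$ and split on $x$. If $x = 0$, then Proposition~\ref{inverse image of phi}(\ref{inverse image of phi(ii)}) gives $\sigma = (\infty, \infty, \dotsc) \in \Sigma_0$, which is (i). If $x \in \mathbb{I}$, the same part gives $\sigma \in \Sigma_\infty$, which is (iii). If $x \in (0,1) \cap \mathbb{Q}$, then Proposition~\ref{inverse image of phi}(\ref{inverse image of phi(i)}) identifies $\sigma = f(x)$ as the element of $\Sigma_n \cap \Sigma_{\realizable}$ whose last finite digit equals $d_n(x)$, and Proposition~\ref{last digit} forces $\sigma_n = d_n(x) > 1$, which is (ii).

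For the backward direction I would exhibit, in each case, an explicit realizer $x \in [0,1)$. For (i), taking $x = 0$ works, since the algorithms \eqref{CF algorithm 1}--\eqref{CF algorithm 2} yield $d_k(0) = \infty$ for all $k$. For (iii), Proposition~\ref{definition of phi n} provides $x = \varphi(\sigma) \in (0,1) \setminus \mathbb{Q}$ with $f(x) = \sigma$. For (ii), I would set $x = \varphi(\sigma) = p_n^*(\sigma)/q_n^*(\sigma) \in (0,1) \cap \mathbb{Q}$; since $\sigma \in \varphi^{-1}(\{x\})$, Proposition~\ref{inverse image of phi}(\ref{inverse image of phi(i)}) shows $\sigma$ is one of the two fiber elements, and as its last finite digit satisfies $\sigma_n > 1$---so that $\sigma$ is not the companion sequence ending in $\dotsc, d_n(x)-1, 1$---it must be the realizable element $f(x)$, whence $x$ realizes $\sigma$.

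Once Propositions~\ref{definition of phi n}, \ref{last digit}, and~\ref{inverse image of phi} are in hand, the argument is essentially bookkeeping, so I do not anticipate a substantial obstacle. The only delicate point is the rational case: the fiber $\varphi^{-1}(\{x\})$ then contains two sequences differing only in their tails, one ending in a digit greater than $1$ and the other ending in $\dotsc, d_n(x)-1, 1$, and one must verify that condition (ii) selects precisely the realizable one. This is exactly where Proposition~\ref{last digit} is decisive, and it simultaneously confirms that the omitted sequences---those in some $\Sigma_n$ with $\sigma_n = 1$---are genuinely non-realizable, so that the three listed conditions are both exhaustive and sharp.
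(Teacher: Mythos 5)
Your proposal is correct and follows exactly the route the paper indicates: the paper omits the proof, stating only that the result ``follows directly from Proposition~\ref{inverse image of phi},'' and your case analysis on $x=0$, $x\in\mathbb{I}$, and $x\in(0,1)\cap\mathbb{Q}$ via that proposition (with Proposition~\ref{last digit} handling the last-digit condition) is precisely the intended argument, written out in full.
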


Fix $\sigma \in \Sigma_n$ for some $n \in \mathbb{N}$. The {\em cylinder set} associated with $\sigma$ is defined by
\[
\Upsilon_\sigma \coloneqq \{ \upsilon \in \Sigma \colon \upsilon^{(n)} = \sigma \}.
\]
Correspondingly, the {\em fundamental interval} associated with $\sigma$ is defined by
\[
I_\sigma \coloneqq \{ x \in [0,1) : d_k(x) = \sigma_k \text{ for all } 1 \leq k \leq n \} = f^{-1} (\Upsilon_\sigma).
\]
For convenience, we also define the sequence $\widehat{\sigma} \in \Sigma_n$ by
\[
\widehat{\sigma} = (\sigma_1, \dotsc, \sigma_{n-1}, \sigma_n+1).
\]
The fact that $I_\sigma$ is indeed an interval is guaranteed by the following classical result.

\begin{proposition}[{\cite[Theorem~1.2.2]{IK02}}] \label{I sigma}
Let $n \in \mathbb{N}$ and $\sigma \coloneqq (\sigma_k)_{k \in \mathbb{N}} \in \Sigma_n$. Then
\begin{align*} 
I_\sigma =
\begin{cases}
(\varphi (\widehat{\sigma}), \varphi (\sigma)], &\text{if $n$ is odd}; \\
[\varphi (\sigma), \varphi (\widehat{\sigma})), &\text{if $n$ is even},
\end{cases}
\quad \text{or} \quad
I_\sigma = 
\begin{cases}
(\varphi (\widehat{\sigma}), \varphi (\sigma)), &\text{if $n$ is odd}; \\
(\varphi (\sigma), \varphi (\widehat{\sigma})), &\text{if $n$ is even},
\end{cases}
\end{align*}
depending on whether $\sigma \in \Sigma_{\realizable}$ or $\sigma \not \in \Sigma_{\realizable}$. Consequently, in either case,
\begin{align} \label{length of I sigma}
\mathcal{L} (I_\sigma) = |\varphi (\sigma) - \varphi (\widehat{\sigma})| = \frac{1}{q_n^* (\sigma) (q_n^* (\sigma) + q_{n-1}^* (\sigma) )}.
\end{align}
\end{proposition}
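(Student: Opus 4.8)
The plan is to present $I_\sigma$ as the image of the unit interval under the single M\"obius transformation determined by the last two convergents of $\sigma$, and then to extract the endpoints, their open/closed status, and the length from the sign and the value of the associated $2\times 2$ determinant. Throughout I write $p_k^* = p_k^*(\sigma)$, $q_k^* = q_k^*(\sigma)$ and set
\[
h(t) \coloneqq \frac{p_n^* + t\, p_{n-1}^*}{q_n^* + t\, q_{n-1}^*}, \qquad t \in [0,1].
\]

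First I would establish the forward parametrization. If $x \in I_\sigma$, then $d_k(x) = \sigma_k$ for $1 \le k \le n$, so the convergents of $x$ and of $\sigma$ agree up to index $n$; that is, $p_k(x) = p_k^*$ and $q_k(x) = q_k^*$ for $0 \le k \le n$ by Proposition~\ref{metric properties}(\ref{metric properties(i)}), since these quantities depend only on the first $n$ digits. Unfolding the recurrence with tail $t = T^n(x) \in [0,1)$ yields the classical identity $x = h(T^n(x))$, which already gives $I_\sigma \subseteq h([0,1])$. From this I read off the endpoints $h(0) = p_n^*/q_n^* = \varphi(\sigma)$ and, using $\widehat{\sigma} = (\sigma_1,\dotsc,\sigma_n + 1)$ together with the recurrence, $h(1) = (p_n^* + p_{n-1}^*)/(q_n^* + q_{n-1}^*) = \varphi(\widehat{\sigma})$.

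Next I would pin down the geometry of $h$. Differentiating gives $h'(t) = (p_{n-1}^* q_n^* - p_n^* q_{n-1}^*)/(q_n^* + t q_{n-1}^*)^2$, whose numerator equals $(-1)^n$ by Proposition~\ref{metric properties}(\ref{metric properties(iv)}); hence $h$ is strictly increasing for even $n$ and strictly decreasing for odd $n$, and is a homeomorphism from $[0,1]$ onto the closed interval with endpoints $\varphi(\sigma)$ and $\varphi(\widehat{\sigma})$. The length formula \eqref{length of I sigma} then drops out at once: $\mathcal{L}(I_\sigma) = |h(1) - h(0)| = |\varphi(\widehat{\sigma}) - \varphi(\sigma)|$, and a direct computation with the same determinant gives $1/(q_n^*(q_n^* + q_{n-1}^*))$.

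The remaining and most delicate step is the endpoint bookkeeping, which is precisely where the dichotomy $\sigma \in \Sigma_{\realizable}$ versus $\sigma \notin \Sigma_{\realizable}$ enters. For interior parameters $t \in (0,1)$ I would verify $h(t) \in I_\sigma$ by checking $T^n(h(t)) = t \in (0,1)$ and recovering the first $n$ digits as $\sigma_1,\dotsc,\sigma_n$, so that $h((0,1)) \subseteq I_\sigma$ and only the two boundary values need attention. The value $t=1$, namely $\varphi(\widehat{\sigma})$, is never attained since $T^n(x) \in [0,1)$, and moreover $\varphi(\widehat{\sigma}) \notin I_\sigma$ because $\widehat{\sigma}$ is realizable with $n$th digit $\sigma_n + 1 \ne \sigma_n$; thus this endpoint is always open. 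The value $t=0$, namely $\varphi(\sigma) = p_n^*/q_n^*$, is the crux: it lies in $I_\sigma$ exactly when its genuine CF expansion has length $n$ with $n$th digit $\sigma_n$. By Propositions~\ref{last digit} and \ref{CF realizable sequence} this occurs precisely when $\sigma_n > 1$, i.e.\ $\sigma \in \Sigma_{\realizable}$; if instead $\sigma_n = 1$, the two-expansion collapse $[\dotsc,\sigma_{n-1},1] = [\dotsc,\sigma_{n-1}+1]$ shortens the expansion so that $d_n(\varphi(\sigma)) \ne 1$, forcing the endpoint to be excluded. Combining this case analysis with the direction of monotonicity coming from the parity of $n$ yields exactly the four cases in the statement. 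The main obstacle is thus not any single estimate but the careful matching of the open/closed endpoints to realizability; everything else follows mechanically from Proposition~\ref{metric properties}.
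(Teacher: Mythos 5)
The paper gives no proof of this proposition, citing \cite[Theorem~1.2.2]{IK02}; your M\"obius-transformation argument (parametrizing $I_\sigma$ by $h(t)=(p_n^*+tp_{n-1}^*)/(q_n^*+tq_{n-1}^*)$ with $t=T^n(x)$, reading off monotonicity and length from the determinant $(-1)^n$, and settling the endpoints via realizability) is correct and is essentially the standard proof underlying that citation. The only part not literally in the cited theorem is the open-interval case for $\sigma\notin\Sigma_{\realizable}$, and your treatment of the endpoint $\varphi(\sigma)$ when $\sigma_n=1$ handles it correctly.
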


We illustrate the exclusion of the endpoint $\varphi (\sigma)$ in the case of a non-realizable sequence with a concrete example.

\begin{example}
Consider the sequences $\sigma \coloneqq (2) \in \Sigma_1 \cap \Sigma_{\realizable}$ and $\sigma' \coloneqq (1,1) \in \Sigma_2 \setminus \Sigma_{\realizable}$. Both yield the same value under $\varphi$, namely, $\varphi (\sigma) = \frac{1}{2}$ and $\varphi (\sigma') = \frac{1}{1+\frac{1}{1}} = \frac{1}{2}$, corresponding to the same CF expansion $[2]$. It follows from the definition of fundamental intervals that $I_\sigma$ contains the endpoint $\varphi (\sigma)$, while $I_{\sigma'}$ does not contain $\varphi (\sigma')$.
\end{example}

The following proposition summarizes key results from our earlier work \cite{Ahn25}, where we introduced a natural topology on the symbolic space $\Sigma$ that aligns with the analytic structure of continued fraction expansions.

\begin{proposition} [{\cite{Ahn25}}] \label{main results of Ahn25}
There exists a topology $\mathcal{T}_\Sigma$ on $\Sigma$ such that the topological space $(\Sigma, \mathcal{T}_\Sigma)$ is compact metrizable. With respect to this topology, the following hold.
\begin{enumerate}[label = \upshape(\roman*), ref = \roman*, leftmargin=*, widest=ii]
\item \label{cylinder set is compact}
For each $n \in \mathbb{N}$ and $\sigma \in \Sigma_n$, both the cylinder set $\Upsilon_\sigma$ and its complement $\Sigma \setminus \Upsilon_\sigma$ are closed and compact in $(\Sigma, \mathcal{T}_\Sigma)$.

\item \label{g is continuous}
Let $\mathbb{N}_\infty^{\mathbb{N}}$ be equipped with the product topology, where $\mathbb{N}_\infty$ is the one-point compactification of the discrete space $\mathbb{N}$. Define the map $g \colon \mathbb{N}_\infty^{\mathbb{N}} \to (\Sigma, \mathcal{T}_\Sigma)$ as follows. For $\sigma\coloneqq(\sigma_k)_{k\in\mathbb N}\in\mathbb N_\infty^{\mathbb N}$, if there exists the smallest $n\in\mathbb N_0$ with $\sigma_{n+1}=\infty$, set
\[
g(\sigma)\coloneqq(\sigma_1,\dots,\sigma_n,\infty,\infty,\dots)\in\Sigma_n.
\]
If no such $n$ exists (that is, if $\sigma_k\in\mathbb N$ for all $k$), set $g(\sigma)\coloneqq\sigma\in\Sigma_\infty$. Then $g$ is continuous.

\item \label{phi is continuous}
The map $\varphi \colon (\Sigma, \mathcal{T}_\Sigma) \to [0,1]$ is continuous.
\item \label{f continuity theorem}
The continued fraction digit coding map $f \colon [0,1) \to (\Sigma, \mathcal{T}_\Sigma)$ has the following continuity properties.
	\begin{enumerate}[label = \upshape(\alph*), ref = \alph*, leftmargin=*, widest=b]
	\item \label{f is continuous at irrational}
	$f$ is continuous at every irrational point and at the left endpoint $0$.
	\item \label{f is discontinuous at rational}
	At each rational point $x \in (0,1) \cap \mathbb{Q}$, the map $f$ is discontinuous, but one-sided limits exist. Specifically, if $\varphi^{-1} ( \{ x \} ) = \{ \sigma, \tau \}$, then
\[
\lim_{\substack{t \to x \\ t \in I_\sigma}} f(t) = \sigma
\quad \text{and} \quad
\lim_{\substack{t \to x \\ t \not \in I_\sigma}} f(t) = \tau.
\]
	\end{enumerate}
\end{enumerate}
\end{proposition}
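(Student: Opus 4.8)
My plan is to realize $\mathcal{T}_\Sigma$ as the quotient topology induced by the truncation map $g$. Since $\mathbb{N}_\infty$ is the one-point compactification of a countable discrete space, it is compact metrizable, and hence so is the countable product $\mathbb{N}_\infty^{\mathbb{N}}$ with the product topology. I would define $\mathcal{T}_\Sigma$ to be the finest topology making $g \colon \mathbb{N}_\infty^{\mathbb{N}} \to \Sigma$ continuous; with this choice part (\ref{g is continuous}) holds by construction, and $(\Sigma, \mathcal{T}_\Sigma)$ is compact as a continuous image of a compact space. The first routine task is to identify the fibers of $g$: for $\sigma \in \Sigma_n$ one has $g^{-1}(\{\sigma\}) = \{\sigma_1\} \times \cdots \times \{\sigma_n\} \times \{\infty\} \times \mathbb{N}_\infty^{\{n+2, n+3, \dotsc\}}$, while the fibers over $\Sigma_\infty$ are singletons; in particular every fiber is compact.

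For part (\ref{cylinder set is compact}) I would compute the saturated preimage $g^{-1}(\Upsilon_\sigma) = \{\tau : \tau_k = \sigma_k \text{ for } 1 \le k \le n\}$, which is clopen in $\mathbb{N}_\infty^{\mathbb{N}}$ because each coordinate factor $\{\sigma_k\}$ with $\sigma_k \in \mathbb{N}$ is clopen in $\mathbb{N}_\infty$. Being saturated and clopen, $\Upsilon_\sigma$ is clopen in the quotient, and since $(\Sigma, \mathcal{T}_\Sigma)$ is compact, both $\Upsilon_\sigma$ and its complement are closed and compact. The countable family $\{\Upsilon_\sigma : \sigma \in \bigcup_{n} \Sigma_n\}$ separates points: if two CF sequences first differ at index $k$, their common prefix consists of finite entries, and a depth-$k$ cylinder contains exactly one of them. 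A countable clopen separating family yields both the Hausdorff property and, for a compact space, metrizability: the topology it generates is a second-countable Hausdorff topology coarser than $\mathcal{T}_\Sigma$, and a continuous bijection from a compact space onto a Hausdorff space is a homeomorphism, so the two topologies coincide; Urysohn's metrization theorem then applies.

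Part (\ref{phi is continuous}) is where I expect the real work, and I regard it as the main obstacle. By the universal property of the quotient, $\varphi$ is continuous on $(\Sigma, \mathcal{T}_\Sigma)$ if and only if $\varphi \circ g$ is continuous on $\mathbb{N}_\infty^{\mathbb{N}}$, and one checks directly that $\varphi(g(\tau)) = [\tau_1, \tau_2, \dotsc]$ evaluated with the conventions $1/\infty = 0$, so that all coordinates past the first $\infty$ are irrelevant. Continuity at a sequence with all finite entries follows from the uniform geometric control on $|\varphi(\sigma) - \varphi_k(\sigma)|$ furnished by Proposition \ref{metric properties}(\ref{metric properties(v)}). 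The delicate case is a ``boundary'' sequence, where some coordinate equals $\infty$: a product-topology neighbor may carry a large but finite partial quotient $a$ in that slot, and I must show its continued fraction value stays close. This is controlled by the recurrence $q_{k+1}^\ast \ge a\, q_k^\ast$ together with Proposition \ref{metric properties}(\ref{metric properties(v)}), which forces the contribution of a large partial quotient to vanish as $a \to \infty$; assembling these estimates uniformly across all truncation depths is the crux.

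Finally, for part (\ref{f continuity theorem}) I would argue through the fundamental intervals of Proposition \ref{I sigma}. Since the endpoints $\varphi(\sigma)$ and $\varphi(\widehat{\sigma})$ of each fundamental interval are rational, every irrational $x$ lies in the interior of the generation-$n$ interval containing it for all $n$; as these intervals shrink to $\{x\}$, any sequence $x_j \to x$ eventually shares arbitrarily long digit prefixes with $x$, giving $f(x_j) \to f(x)$ and establishing (\ref{f is continuous at irrational}), while at $x = 0$ one uses instead that $d_1(x_j) \to \infty$. For (\ref{f is discontinuous at rational}) at a rational $x$ with $f(x) = \sigma \in \Sigma_n$ and companion $\sigma' = (\sigma_1, \dotsc, \sigma_{n-1}, \sigma_n - 1, 1)$, approaching $x$ from within $I_\sigma$ forces $d_{n+1} \to \infty$ and hence $f \to \sigma$, whereas approaching from outside $I_\sigma$ drives the digits to $\sigma'$; as $\sigma \ne \sigma'$ in $\Sigma$ this exhibits the jump, with the two one-sided limits as claimed. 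The bookkeeping of which side carries which companion sequence, read off from the parity cases in Proposition \ref{I sigma}, is the only fiddly part here.
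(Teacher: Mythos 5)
The paper offers no proof of this proposition at all: it is imported verbatim from the earlier work \cite{Ahn25}, so there is no internal argument to compare yours against, and your proposal is necessarily a genuinely different (self-contained) route. Since the statement only asserts the \emph{existence} of a topology with properties (\ref{cylinder set is compact})--(\ref{f continuity theorem}), you are free to construct your own, and your plan is sound: $g$ restricts to the identity on $\Sigma$, so it is a retraction and the final topology makes (\ref{g is continuous}) true by construction and gives compactness; the computation $g^{-1}(\Upsilon_\sigma)=\{\tau:\tau_k=\sigma_k \text{ for } 1\le k\le n\}$ is correct and yields (\ref{cylinder set is compact}); and the universal property of the quotient reduces (\ref{phi is continuous}) to continuity of $\tau\mapsto[\tau_1,\tau_2,\dotsc]$ on $\mathbb{N}_\infty^{\mathbb{N}}$, for which your two estimates (the uniform bound $|\varphi-\varphi_k|\le 1/(q_k^*q_{k+1}^*)$ at all-finite sequences, and $q_{n+1}^*\ge a\,q_n^*$ to kill the contribution of a large partial quotient at a boundary sequence) are exactly what is needed. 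Two points deserve tightening. First, for metrizability, the topology generated by the cylinders alone is not Hausdorff: the complement of a cylinder is not a union of cylinders, since for instance $(\infty,\infty,\dotsc)\in\Sigma_0$ lies in no cylinder at all. Either adjoin the complements to your generating family, or argue more cleanly by mapping $(\Sigma,\mathcal{T}_\Sigma)$ into $\{0,1\}^{\mathbb{N}}$ via the indicators of the countably many cylinders; this map is continuous because cylinders are clopen, injective because they separate points, and a continuous injection from a compact space into a Hausdorff space is an embedding, which gives compact metrizability at once. Second, part (\ref{f continuity theorem}) silently requires knowing a neighborhood base at a point $\sigma\in\Sigma_n$, namely the sets $\{\sigma\}\cup\bigcup_{m>M}\Upsilon_{(\sigma_1,\dotsc,\sigma_n,m)}$; in your quotient picture this follows from the tube lemma applied to the compact fiber $g^{-1}(\{\sigma\})=\{\sigma_1\}\times\dotsb\times\{\sigma_n\}\times\{\infty\}\times\mathbb{N}_\infty^{\{n+2,n+3,\dotsc\}}$, and it is precisely the fact that converts ``$d_{n+1}(t)\to\infty$'' into $\mathcal{T}_\Sigma$-convergence on either side of a rational point. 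With these two insertions the outline goes through.
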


To clarify the construction of the map $g$ in the above proposition, we observe the following. Under the map $g \colon \mathbb{N}_\infty^{\mathbb{N}} \to (\Sigma, \mathcal{T}_\Sigma)$, a sequence $\sigma \coloneqq (\sigma_k)_{k \in \mathbb{N}} \in \mathbb{N}_\infty^{\mathbb{N}}$ is modified by identifying the smallest index $n \in \mathbb{N}_0$ such that $\sigma_{n+1} = \infty$, setting all subsequent terms to $\infty$, i.e., $g(\sigma) = \sigma^{(n)}$. If no such index exists---that is, if $\sigma \in \Sigma_\infty$---then $g$ leaves the sequence unchanged.

We conclude this section with two well-known identities that will be used later. Both are classical consequences of M\"obius inversion. Recall that the Riemann zeta function $\zeta$ is defined by $\zeta (s) \coloneqq \sum_{n \geq 1} 1/n^s$ for each complex number $s$ with $\operatorname{Re}(s) > 1$. The M\"obius function $\mu \colon \mathbb{N} \to \{ -1, 0, 1 \}$ is defined for each $n \in \mathbb{N}$ by
\[
\mu (n)
\coloneqq
\begin{cases}
0, &\text{if $n$ has a squared prime factor}; \\
1, &\text{if $n=1$}; \\
(-1)^k, &\text{if $n$ is a product of $k$ distinct primes}.
\end{cases}
\]

\begin{proposition} [See {\cite[Theorem~263]{HW08}}] \label{Mobius identity}
For each $n \in \mathbb{N}$, we have
\[
\sum_{\substack{d \mid n}} \mu (d) = \begin{cases} 1, &\text{if } n = 1; \\ 0, &\text{if } n > 1. \end{cases}
\]
\end{proposition}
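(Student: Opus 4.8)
The plan is to split into the trivial case $n=1$ and the main case $n>1$, and in the latter case to reduce the divisor sum to a binomial cancellation using the multiplicative structure of $\mu$. For $n=1$, the only divisor is $d=1$, and since $\mu(1)=1$ by definition, the sum equals $1$, which matches the claim.

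For $n>1$, the first step is to discard all divisors that are not squarefree: by the definition of the M\"obius function, $\mu(d)=0$ whenever $d$ has a squared prime factor, so only squarefree divisors of $n$ contribute. Writing the prime factorization $n=p_1^{a_1}\cdots p_r^{a_r}$ with distinct primes $p_1,\dots,p_r$ and $r\geq 1$, the squarefree divisors of $n$ are exactly the products $\prod_{i\in S}p_i$ as $S$ ranges over subsets of $\{1,\dots,r\}$, with the empty subset corresponding to $d=1$. For a squarefree divisor built from $j=|S|$ distinct primes, the definition of $\mu$ gives $\mu(d)=(-1)^j$, and there are exactly $\binom{r}{j}$ subsets of that size.

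Grouping the surviving terms by the cardinality $j$ of the underlying subset, I would then write
\[
\sum_{d \mid n}\mu(d)=\sum_{j=0}^{r}\binom{r}{j}(-1)^j=(1-1)^r=0,
\]
where the middle equality collects squarefree divisors according to their number of prime factors and the final equality is the binomial theorem. Since $r\geq 1$, the right-hand side is $0$, which completes the argument.

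There is no genuine obstacle here, as the statement is entirely classical; the only step requiring a moment of care is the reduction to squarefree divisors, namely verifying that every divisor with a repeated prime factor is correctly removed because $\mu$ annihilates it. Once that reduction is in place, the bijection between squarefree divisors of $n$ and subsets of $\{p_1,\dots,p_r\}$ makes the binomial cancellation immediate.
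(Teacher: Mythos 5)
Your argument is correct and complete: reducing to squarefree divisors, identifying them with subsets of the prime set of $n$, and invoking $(1-1)^r = 0$ is exactly the classical proof. The paper itself gives no proof, citing Hardy and Wright (Theorem 263), whose argument is the same as yours, so there is nothing to reconcile.
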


\begin{proposition} [See {\cite[Theorem~287]{HW08}}] \label{Mobius Riemann identity}
For each real number $s>1$, we have
\[
\frac{1}{\zeta (s)} = \sum_{n \geq 1} \frac{\mu (n)}{n^s}.
\]
\end{proposition}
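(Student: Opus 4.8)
The plan is to verify the identity by forming the product of the Dirichlet series $\zeta(s) = \sum_{n \geq 1} n^{-s}$ with the candidate series $\sum_{n \geq 1} \mu(n) n^{-s}$ and showing that this product collapses to $1$ by means of the M\"obius identity recorded in Proposition \ref{Mobius identity}. The first ingredient is a convergence observation: since $s > 1$ is real, both series converge absolutely. The series for $\zeta(s)$ is absolutely convergent by definition, and the second series is dominated termwise, $|\mu(n)| n^{-s} \leq n^{-s}$, so it too converges absolutely. This absolute convergence of both factors is precisely what will license the term rearrangement carried out next.

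First I would form the product and regroup it as a Dirichlet convolution. Writing the product as a double sum over pairs $(a,b) \in \mathbb{N}^2$ and then collecting all contributions with a fixed product $n = ab$, I obtain
\[
\zeta(s) \sum_{b \geq 1} \frac{\mu(b)}{b^s} = \sum_{a \geq 1} \sum_{b \geq 1} \frac{\mu(b)}{(ab)^s} = \sum_{n \geq 1} \frac{1}{n^s} \sum_{d \mid n} \mu(d),
\]
where in the final step $d$ ranges over the divisors of $n$, corresponding to the choices $b = d$ and $a = n/d$. Applying Proposition \ref{Mobius identity}, the inner sum $\sum_{d \mid n} \mu(d)$ equals $1$ when $n = 1$ and vanishes for every $n > 1$, so the entire expression telescopes to the single surviving term $1/1^s = 1$.

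It then remains only to divide through by $\zeta(s)$, which is legitimate because $\zeta(s) = \sum_{n \geq 1} n^{-s} \geq 1 > 0$ for every real $s > 1$, so division introduces no difficulty. This yields $\sum_{n \geq 1} \mu(n) n^{-s} = 1/\zeta(s)$, as claimed.

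The sole point requiring genuine care---and hence the main, if modest, obstacle---is the justification of regrouping the product of the two series into the convolution indexed by $n = ab$. This step rests on the fact that an absolutely convergent double series may be summed in any order and grouped arbitrarily without changing its value; equivalently, it is the statement that the product of two absolutely convergent series equals the sum of their Cauchy product. Once the absolute convergence established in the first paragraph is in hand, this rearrangement is fully rigorous, and because we restrict throughout to real $s > 1$, no analytic continuation or complex-variable machinery is needed.
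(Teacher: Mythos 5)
Your proof is correct, and since the paper offers no proof of this proposition (it simply cites Hardy and Wright, Theorem~287), your argument---multiplying $\zeta(s)$ by $\sum_{n \geq 1} \mu(n) n^{-s}$, regrouping the absolutely convergent double series as a Dirichlet convolution, and collapsing the inner sum via Proposition~\ref{Mobius identity}---is precisely the classical argument of the cited reference. The two points needing care, the rearrangement licensed by absolute convergence and the division by $\zeta(s) \geq 1 > 0$, are both handled correctly, so nothing further is required.
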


\section{Auxiliary results} \label{Auxiliary results}

In this section, we present several auxiliary results that will be used in the proofs of the main theorems. Many of the statements and arguments are inspired by those in \cite{Ahn23}, where analogous results were established in the context of Pierce expansions.

\begin{lemma} \label{f image of fundamental interval}
Let $\sigma \in \Sigma_n$ for some $n \in \mathbb{N}$. Then the following hold.
\begin{enumerate}[label = \upshape(\roman*), ref = \roman*, leftmargin=*, widest=ii]
\item \label{f image of fundamental interval(i)}
We have the strict inclusion $f(I_\sigma) \subsetneq \Upsilon_\sigma$. More precisely,
\[
\Upsilon_\sigma \setminus f(I_\sigma) = \bigcup_{m \geq n} \left\{ \tau \coloneqq (\tau_k)_{k \in \mathbb{N}} \in \bigcup_{n \in \mathbb{N}_0} \Sigma_n : \tau^{(n)} = \sigma \text{ and } \tau_m = 1 \right\}.
\]
\item \label{f image of fundamental interval is dense}
We have $\overline{f(I_\sigma)} = \Upsilon_\sigma$; that is, the image $f(I_\sigma)$ is dense in the cylinder set $\Upsilon_\sigma$.
\end{enumerate}
\end{lemma}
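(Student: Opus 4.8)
The plan is to reduce both parts to the elementary set-theoretic identity $f(I_\sigma) = \Upsilon_\sigma \cap \Sigma_{\realizable}$, after which everything follows from the classification of realizable sequences in Proposition~\ref{CF realizable sequence} and the topology on $\Sigma$ supplied by Proposition~\ref{main results of Ahn25}. The starting point is the definition $I_\sigma = f^{-1}(\Upsilon_\sigma)$ together with $\Sigma_{\realizable} = f([0,1))$. For any map one has $f(f^{-1}(B)) = B \cap f([0,1))$; applying this with $B = \Upsilon_\sigma$ yields $f(I_\sigma) = \Upsilon_\sigma \cap \Sigma_{\realizable}$. This single identity is really the heart of the lemma, and I would establish it first.

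For part~\ref{f image of fundamental interval(i)}, the identity gives $\Upsilon_\sigma \setminus f(I_\sigma) = \Upsilon_\sigma \setminus \Sigma_{\realizable} = \Upsilon_\sigma \cap (\Sigma \setminus \Sigma_{\realizable})$. By Proposition~\ref{CF realizable sequence}, a sequence fails to be realizable precisely when it is a finite sequence $\tau \in \Sigma_m$ for some $m \in \mathbb{N}$ whose terminal digit satisfies $\tau_m = 1$. I would then intersect this description with the cylinder condition $\tau^{(n)} = \sigma$: since $\sigma_1, \dots, \sigma_n \in \mathbb{N}$ are finite, any $\tau \in \Sigma_m$ with $\tau^{(n)} = \sigma$ must have $m \geq n$ (otherwise position $m+1 \leq n$ would be forced simultaneously to equal $\infty$, from $\tau \in \Sigma_m$, and the finite value $\sigma_{m+1}$, from $\tau^{(n)} = \sigma$). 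This produces exactly the claimed union over $m \geq n$ of the finite sequences $\tau \in \Sigma_m$ with $\tau^{(n)} = \sigma$ and last entry $\tau_m = 1$. The strict inclusion $f(I_\sigma) \subsetneq \Upsilon_\sigma$ then follows by exhibiting one such sequence, e.g.\ $(\sigma_1, \dots, \sigma_n, 1) \in \Sigma_{n+1}$, which lies in $\Upsilon_\sigma$ but is non-realizable.

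For part~\ref{f image of fundamental interval is dense}, one inclusion is immediate: since $\Upsilon_\sigma$ is closed in $(\Sigma, \mathcal{T}_\Sigma)$ by Proposition~\ref{main results of Ahn25}\ref{cylinder set is compact} and $f(I_\sigma) \subseteq \Upsilon_\sigma$, we obtain $\overline{f(I_\sigma)} \subseteq \Upsilon_\sigma$. For the reverse inclusion it suffices, by part~\ref{f image of fundamental interval(i)}, to show that each non-realizable $\tau \in \Upsilon_\sigma$---a finite sequence of some length $m \geq n$ with $\tau_m = 1$---lies in $\overline{f(I_\sigma)}$. I would approximate such a $\tau$ by the sequences $\tau^{[j]} \coloneqq (\tau_1, \dots, \tau_m, j) \in \Sigma_{m+1}$ obtained by appending a digit $j \in \mathbb{N}$ with $j > 1$: each $\tau^{[j]}$ is realizable (its terminal digit exceeds $1$) and lies in $\Upsilon_\sigma$ (as $m \geq n$), so $\tau^{[j]} \in \Upsilon_\sigma \cap \Sigma_{\realizable} = f(I_\sigma)$. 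As $j \to \infty$ one has $\tau^{[j]} \to \tau$ in the product topology of $\mathbb{N}_\infty^{\mathbb{N}}$, since the only varying coordinate is position $m+1$, where $j \to \infty$ in the one-point compactification $\mathbb{N}_\infty$. Because $g$ fixes each $\tau^{[j]}$ and fixes $\tau$, continuity of $g$ (Proposition~\ref{main results of Ahn25}\ref{g is continuous}) together with the metrizability of $\mathcal{T}_\Sigma$ gives $\tau^{[j]} \to \tau$ in $(\Sigma, \mathcal{T}_\Sigma)$, whence $\tau \in \overline{f(I_\sigma)}$. This completes the density claim.

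The routine portions are the set-theoretic identity and the classification bookkeeping of part~\ref{f image of fundamental interval(i)}; the step demanding genuine care is the density argument, where convergence must be verified in the relatively abstract topology $\mathcal{T}_\Sigma$ rather than coordinatewise in $\Sigma$. Routing the convergence through the continuous surjection $g$ from the product space $\mathbb{N}_\infty^{\mathbb{N}}$, where convergence is transparent, is precisely what makes this tractable, so the main obstacle is setting up that approximation cleanly and confirming that $g$ acts as the identity on the sequences involved.
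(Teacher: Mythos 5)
Your proposal is correct and follows essentially the same route as the paper: part (\ref{f image of fundamental interval(i)}) is the direct consequence of Proposition \ref{CF realizable sequence} (your identity $f(I_\sigma)=\Upsilon_\sigma\cap\Sigma_{\realizable}$ just makes explicit what the paper leaves implicit), and your density argument for part (\ref{f image of fundamental interval is dense}) --- closedness of $\Upsilon_\sigma$, approximating a non-realizable finite sequence by appending a terminal digit greater than $1$, and transporting convergence from $\mathbb{N}_\infty^{\mathbb{N}}$ to $(\Sigma,\mathcal{T}_\Sigma)$ via the continuous map $g$, which fixes all sequences involved --- is exactly the paper's proof. No gaps.
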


\begin{proof}
The first part follows directly from Proposition \ref{CF realizable sequence}, so we prove only the second part. Let $\sigma \coloneqq (\sigma_k)_{k \in \mathbb{N}}$. Since $f(I_\sigma) \subseteq \Upsilon_\sigma$ by definition and $\Upsilon_\sigma$ is closed in $(\Sigma, \mathcal{T}_\Sigma)$ by Proposition \ref{main results of Ahn25}(\ref{cylinder set is compact}), it suffices to show that every point in $\Upsilon_\sigma \setminus f(I_\sigma)$ is a limit point of $f(I_\sigma)$. Take any $\upsilon \coloneqq (\upsilon_k)_{k \in \mathbb{N}} \in \Upsilon_\sigma \setminus f(I_\sigma)$. Then, by part (\ref{f image of fundamental interval(i)}), there exists an $m \geq n$ such that
\[
\upsilon = (\sigma_1, \dotsc, \sigma_{n-1}, \sigma_n, \dotsc, \upsilon_{m-1}, \upsilon_m) \in \Sigma_m \quad \text{with } \upsilon_m = 1.
\]
Define a sequence $(\bm{\tau}_k)_{k \in \mathbb{N}}$ in $\Sigma$ by
\[
\bm{\tau}_k \coloneqq (\sigma_1, \dotsc, \sigma_n, \dotsc, \upsilon_{m-1}, \upsilon_m, k+1) \in \Sigma_{m+1}, \quad k \in \mathbb{N}.
\]
By Proposition \ref{CF realizable sequence}, each $\bm{\tau}_k$ belongs to $f(I_\sigma)$, since its last digit $k+1 > 1$ ensures realizability. Clearly, $\bm{\tau}_k \to \upsilon$ as $k \to \infty$ in the product space $\mathbb{N}_\infty^{\mathbb{N}}$. Using the continuous map $g \colon \mathbb{N}_\infty^{\mathbb{N}} \to (\Sigma, \mathcal{T}_{\Sigma})$ from Proposition \ref{main results of Ahn25}(\ref{g is continuous}), we obtain $g(\bm{\tau}_k) \to g(\upsilon)$ as $k \to \infty$ in $(\Sigma, \mathcal{T}_\Sigma)$. Moreover, $g(\bm{\tau}_k) = \bm{\tau_k}$ and $g(\upsilon) = \upsilon$. Thus, $\upsilon \in \overline{f(I_\sigma)}$, and this completes the proof.
\end{proof}

\begin{lemma} [See {\cite[Theorem~164]{HW08}}] \label{sequence lemma}
For each $\sigma \in \Sigma$, the sequence $( (-1)^k (\varphi (\sigma) - \varphi_k(\sigma)) )_{k \in \mathbb{N}_0}$ is monotonically decreasing to $0$.
\end{lemma}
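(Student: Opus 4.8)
The plan is to work directly with the signed quantities $a_k \coloneqq (-1)^k(\varphi(\sigma) - \varphi_k(\sigma))$ for $k \in \mathbb{N}_0$ and to establish three things: each $a_k$ is non-negative, the sequence tends to $0$, and it is non-increasing. The first two are immediate from the final display of Proposition~\ref{metric properties}(\ref{metric properties(v)}): since $0 \le a_k\, q_k^*(\sigma) \le g^{-k}$ and $q_k^*(\sigma) \ge 1$, we obtain $0 \le a_k \le g^{-k}$, so $a_k \ge 0$ for all $k$ and $a_k \to 0$ as $k \to \infty$. It then remains only to prove the monotonicity $a_{k+1} \le a_k$; together with non-negativity this gives a monotonically decreasing sequence with limit $0$.

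The engine of the monotonicity argument will be a clean additive identity. Writing $\varphi_{k+1} - \varphi_k = (p_{k+1}^* q_k^* - p_k^* q_{k+1}^*)/(q_k^* q_{k+1}^*)$ and invoking the determinant identity $p_{k+1}^* q_k^* - p_k^* q_{k+1}^* = (-1)^k$ from Proposition~\ref{metric properties}(\ref{metric properties(iv)}), I would split $\varphi - \varphi_k = (\varphi - \varphi_{k+1}) + (\varphi_{k+1} - \varphi_k)$ and multiply through by $(-1)^k$ to obtain
\[
a_k + a_{k+1} = \frac{1}{q_k^*(\sigma)\, q_{k+1}^*(\sigma)}.
\]
This identity is valid for every index for which the determinant relation holds, namely all $k \ge 0$ when $\sigma \in \Sigma_\infty$ and all $0 \le k \le n-1$ when $\sigma \in \Sigma_n$.

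With this in hand the monotonicity follows by comparing two consecutive instances of the identity. Applying it at $k+1$ gives $a_{k+1} = (q_{k+1}^* q_{k+2}^*)^{-1} - a_{k+2} \le (q_{k+1}^* q_{k+2}^*)^{-1}$, since $a_{k+2} \ge 0$. The recurrence $q_{k+2}^* = \sigma_{k+2} q_{k+1}^* + q_k^*$ together with $\sigma_{k+2} \ge 1$ and the monotonicity $q_{k+1}^* \ge q_k^*$ of the denominators yields $q_{k+2}^* \ge q_{k+1}^* + q_k^* \ge 2 q_k^*$, and hence
\[
a_{k+1} \le \frac{1}{q_{k+1}^*(\sigma)\, q_{k+2}^*(\sigma)} \le \frac{1}{2\, q_k^*(\sigma)\, q_{k+1}^*(\sigma)} = \frac{a_k + a_{k+1}}{2}.
\]
Rearranging gives $a_{k+1} \le a_k$, as desired (the first inequality is in fact strict when $a_{k+2}>0$, giving strict decrease in the infinite case).

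Finally I would dispose of the boundary bookkeeping in the finite case $\sigma \in \Sigma_n$, which I expect to be the only delicate point. For $k \ge n$ the convergents have stabilized, so $\varphi_k = \varphi_n = \varphi$ and $a_k = 0$, making the tail constantly zero; for $0 \le k \le n-2$ the displayed comparison applies verbatim, as both instances of the identity and the recurrence at index $k+2 \le n$ are available; and the single remaining step $a_n \le a_{n-1}$ (the comparison at $k = n-1$) holds trivially because $a_n = 0$ while $a_{n-1} = (q_{n-1}^* q_n^*)^{-1} > 0$ by the exact formula in Proposition~\ref{metric properties}(\ref{metric properties(v)}). These cases exhaust all $k \ge 0$, so $(a_k)_{k \in \mathbb{N}_0}$ is non-increasing, which completes the argument.
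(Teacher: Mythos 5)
Your proposal is correct, and every step checks out: the identity $a_k+a_{k+1}=1/(q_k^*(\sigma)q_{k+1}^*(\sigma))$ follows from the determinant relation exactly as you say, the averaging step $a_{k+1}\le\tfrac12(a_k+a_{k+1})$ rearranges to the desired monotonicity, and your index bookkeeping in the finite case is accurate (the degenerate case $\sigma\in\Sigma_0$, where all $a_k=0$, is trivially subsumed). The route differs from the paper's in one genuine respect. The paper proves $a_{n+1}<a_n$ by sandwiching: it takes the upper bound $a_{n+1}<1/(q_{n+1}^*q_{n+2}^*)$ and the lower bound $1/(q_n^*(q_{n+1}^*+q_n^*))<a_n$ directly from the two-sided inequalities of Proposition \ref{metric properties}(\ref{metric properties(v)}), and then links them with $q_{n+1}^*q_{n+2}^*\ge q_n^*(q_{n+1}^*+q_n^*)$, which is the same recurrence-plus-monotonicity comparison you use (your version is $q_{k+2}^*\ge 2q_k^*$). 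You instead derive an exact additive identity from Proposition \ref{metric properties}(\ref{metric properties(iv)}) and use only the non-negativity of $a_{k+2}$, never the quantitative bounds of part (\ref{metric properties(v)}) beyond $a_k\ge 0$. What this buys is a self-contained, equality-based argument that also handles the finite-expansion boundary indices transparently (your explicit case split at $k=n-1$ and $k\ge n$), whereas the paper disposes of that case with the dichotomy ``$a_{n+1}=0$ or not'' and otherwise leans on part (\ref{metric properties(v)}) as a black box; the paper's version is correspondingly shorter. Both arguments ultimately rest on the same two facts about the denominators, so the difference is one of mechanism rather than of underlying input.
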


\begin{proof}
Define $a_k \coloneqq (-1)^k (\varphi (\sigma) - \varphi_k(\sigma))$ for each $k \in \mathbb{N}_0$. By Proposition \ref{metric properties}(\ref{metric properties(v)}), each $a_k \geq 0$. It is clear that $a_k \to 0$ as $k \to \infty$ (see Proposition \ref{definition of phi n} and its preceding paragraph). Fix $n \in \mathbb{N}$. If $a_{n+1} = 0$, then $\varphi_k (\sigma) = \varphi(\sigma)$ for all $k \geq n+1$, so $a_n \geq 0 = a_{n+1}$. Suppose now that $a_{n+1} \neq 0$. Then $a_k \neq 0$ for all $1 \leq k \leq n$ and $\sigma_{n+2} \neq \infty$. By Proposition \ref{metric properties}(\ref{metric properties(v)}), we have
\[
\frac{1}{q_{n+1}^* (\sigma)(q_{n+2}^* (\sigma) + q_{n+1}^* (\sigma))} 
< a_{n+1} 
< \frac{1}{q_{n+1}^* (\sigma)q_{n+2}^* (\sigma)}.
\]
Using the recurrence relation $q_{n+2}^*(\sigma) = \sigma_{n+2}q_{n+1}^*(\sigma) + q_{n}^*(\sigma)$ (Proposition \ref{metric properties}(\ref{metric properties(i)})), we compute
\begin{align*}
\frac{1}{q_{n+1}^* (\sigma)q_{n+2}^* (\sigma)}
&= \frac{1}{q_n^* (\sigma) (\sigma_{n+2} q_{n+1}^* (\sigma) + q_n^* (\sigma))} \\
&\leq \frac{1}{q_n^* (\sigma) (q_{n+1}^* (\sigma) + q_n^* (\sigma))} < a_n.
\end{align*}
Here, the first inequality uses $\sigma_{n+2} \geq 1$, and the second again applies Proposition \ref{metric properties}(\ref{metric properties(v)}). Therefore, $a_{n+1} < a_n$, and the lemma is proved.
\end{proof}

We define the {\em unweighted error-sum function of CF sequences} $\mathcal{E}^* \colon \Sigma \to \mathbb{R}$ by
\[
\mathcal{E}^*(\sigma) \coloneqq \sum_{n \geq 0} (\varphi (\sigma) - \varphi_n(\sigma) )
\]
for each $\sigma \in \Sigma$. By Proposition \ref{metric properties}(\ref{metric properties(v)}), the series on the right-hand side is absolutely and uniformly convergent on $\Sigma$, and thus $\mathcal{E}^*$ is well defined.

\begin{lemma} \label{commutative diagram for E}
We have $\mathcal{E} = \mathcal{E}^* \circ f$ on $[0,1)$; that is, the following diagram commutes:
\begin{center}
\begin{tikzcd}[column sep=small]
[0,1) \arrow{rr}{\mathcal{E}} \arrow[swap, shift right=.75ex]{dr}{f}& &\mathbb{R} \\
& \Sigma \arrow[swap]{ur}{\mathcal{E}^*}
\end{tikzcd}    
\end{center}
\end{lemma}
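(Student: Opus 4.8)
The plan is to reduce the identity $\mathcal{E} = \mathcal{E}^* \circ f$ to two pointwise correspondences between the convergent data attached to $x$ and the truncation data attached to $\sigma \coloneqq f(x)$, and then to substitute them termwise into the defining series of $\mathcal{E}^*$. Throughout I fix $x \in [0,1)$ and write $\sigma = f(x) = (d_1(x), d_2(x), \dotsc)$.

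First I would record the convergent-level identity: for every $n \in \mathbb{N}_0$,
\[
\varphi_n(f(x)) = \frac{p_n(x)}{q_n(x)}.
\]
This is immediate from the definitions. For $n \in \mathbb{N}$, the definition of $\varphi_n$ gives $\varphi_n(\sigma) = [d_1(x), \dotsc, d_n(x), \infty, \infty, \dotsc]$, which—since $d_0(x) = \lfloor x \rfloor = 0$ for $x \in [0,1)$—coincides with the $n$th convergent $p_n(x)/q_n(x) = [d_0(x); d_1(x), \dotsc, d_n(x)]$. The case $n = 0$ is handled trivially: $\varphi_0(\sigma) = 0 = 0/1 = p_0(x)/q_0(x)$, again using $d_0(x) = 0$.

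Second I would establish that $\varphi(f(x)) = x$ for every $x \in [0,1)$, arguing by the three cases in Proposition \ref{inverse image of phi}. If $x \in \mathbb{I}$ or $x \in (0,1) \cap \mathbb{Q}$, then $f(x) \in \varphi^{-1}(\{ x \})$ by parts (\ref{inverse image of phi(ii)}) and (\ref{inverse image of phi(i)}) respectively, so $\varphi(f(x)) = x$. The remaining case $x = 0$ is direct: $d_1(0) = \infty$ by \eqref{CF algorithm 1}, so $f(0) = (\infty, \infty, \dotsc) \in \Sigma_0$ and $\varphi(f(0)) = \varphi_0(f(0)) = 0$. Combining the two identities, and using that the series defining $\mathcal{E}^*$ converges (by the remark following its definition, which invokes Proposition \ref{metric properties}(\ref{metric properties(v)})), I may substitute termwise to obtain
\[
\mathcal{E}^*(f(x)) = \sum_{n \geq 0} \bigl( \varphi(f(x)) - \varphi_n(f(x)) \bigr) = \sum_{n \geq 0} \left( x - \frac{p_n(x)}{q_n(x)} \right) = \mathcal{E}(x),
\]
which is the asserted commutativity; convergence of the series defining $\mathcal{E}$ is inherited from that of $\mathcal{E}^*$.

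As for obstacles, there is essentially no analytic difficulty here: the entire content is careful bookkeeping of the definitions of $f$, $\varphi_n$, and $\varphi$. The one point requiring mild care is the non-injectivity of $\varphi$ at rational points, where the fiber $\varphi^{-1}(\{ x \})$ is the two-element set $\{ \sigma, \sigma' \}$; one must confirm that it is the \emph{realizable} representative $f(x) = \sigma$ that is being evaluated, and that this representative maps back to $x$. Proposition \ref{inverse image of phi}(\ref{inverse image of phi(i)}) makes both points explicit, so the verification remains routine.
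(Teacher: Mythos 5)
Your proposal is correct and follows essentially the same route as the paper: both reduce the claim to the two identities $\varphi(f(x)) = x$ and $\varphi_n(f(x)) = p_n(x)/q_n(x)$ and then substitute termwise into the series defining $\mathcal{E}^*$. Your only deviation is to justify $\varphi(f(x)) = x$ by casework through Proposition \ref{inverse image of phi}, where the paper simply reads it off from the definition of $\varphi$; this is a matter of explicitness, not of method.
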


\begin{proof}
Let $x \in [0,1)$. Recall that the map $f \colon [0,1) \to \Sigma$ sends a number to its sequence of CF digits. To be specific, if $x = [d_1(x), d_2(x), \dotsc]$, then $\sigma \coloneqq f(x) = ( d_1(x), d_2(x), \dotsc ) \in \Sigma$. By the definition of $\varphi$, we have
\[
\varphi (\sigma) = [d_1(x), d_2(x), \dotsc] = x,
\]
and by the definitions of $\varphi_n$ and the convergents of CF, we have for each $n \in \mathbb{N}$, 
\[
\varphi_n(\sigma) = [d_1(x), \dotsc, d_n(x), \infty, \infty, \dotsc] = p_n(x)/q_n(x).
\]
Therefore, 
\[
(\mathcal{E}^* \circ f)(x) = \mathcal{E}^*(\sigma) = \sum_{n \geq 0} (\varphi (\sigma) - \varphi_n(\sigma)) = \sum_{n \geq 0} \left( x - \frac{p_n(x)}{q_n(x)} \right) = \mathcal{E}(x),
\]
as desired.
\end{proof}

However, the converse identity $\mathcal{E} \circ \varphi = \mathcal{E}^*$ does not hold in general.

\begin{example}
Consider the sequence $\sigma \coloneqq (1,1) \in \Sigma_2 \setminus \Sigma_{\realizable}$. On one hand, we have $\varphi (\sigma) = \frac{1}{1+\frac{1}{1}} = \frac{1}{2} = [2]$, so that $(\mathcal{E} \circ \varphi)(\sigma) = \mathcal{E}([2]) = \left( \frac{1}{2} - 0 \right) + \left( \frac{1}{2} - \frac{1}{2} \right) = \frac{1}{2}$. On the other hand, $\mathcal{E}^*(\sigma) = \left( \frac{1}{2} - 0 \right) + \left( \frac{1}{2} - 1 \right) = 0$. This discrepancy illustrates that the identity $\mathcal{E} \circ \varphi = \mathcal{E}^*$ fails in general, particularly for sequences not in $\Sigma_{\realizable}$.
\end{example}

We now establish the continuity of $\mathcal{E}^*$ on $(\Sigma, \mathcal{T}_\Sigma)$, which is a CF analogue of \cite[Lemma~3.15]{Ahn23}.

\begin{lemma} \label{E star is continuous}
The map $\mathcal{E}^* \colon (\Sigma, \mathcal{T}_\Sigma) \to \mathbb{R}$ is continuous.
\end{lemma}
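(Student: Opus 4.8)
The plan is to realize $\mathcal{E}^*$ as the uniform limit of its partial sums and invoke the classical fact that a uniform limit of continuous functions is continuous. Writing $h_n \colon \Sigma \to \mathbb{R}$ for $h_n(\sigma) \coloneqq \varphi(\sigma) - \varphi_n(\sigma)$ and $S_N \coloneqq \sum_{n=0}^N h_n$, the task splits into two independent claims: (a) the series $\sum_{n \ge 0} h_n$ converges uniformly on $\Sigma$, and (b) each summand $h_n$ is continuous on $(\Sigma, \mathcal{T}_\Sigma)$. Granting both, every $S_N$ is continuous as a finite sum of continuous functions, and the uniform convergence $S_N \to \mathcal{E}^*$ transfers continuity to the limit.

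For (a) I would apply the Weierstrass $M$-test. The final inequality of Proposition \ref{metric properties}(\ref{metric properties(v)}) gives $0 \le (-1)^n(\varphi(\sigma) - \varphi_n(\sigma)) q_n^*(\sigma) \le 1/g^n$ for every $\sigma \in \Sigma$ and $n \in \mathbb{N}_0$; since $q_n^*(\sigma) \ge 1$, this yields the $\sigma$-independent bound $|h_n(\sigma)| \le 1/g^n$. As $g > 1$, the majorant series $\sum_{n \ge 0} 1/g^n$ converges, so $\sum_{n\ge0} h_n$ converges uniformly (and absolutely) on $\Sigma$. This is precisely the convergence already recorded when $\mathcal{E}^*$ was defined, now read as uniform convergence.

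The substance of the argument, and the step I expect to be the main obstacle, is (b), namely the continuity of $\varphi_n$ for each fixed $n$ (continuity of $\varphi$ itself is Proposition \ref{main results of Ahn25}(\ref{phi is continuous})). I would factor $\varphi_n = \varphi \circ T_n$, where $T_n \colon \Sigma \to \Sigma$ is the truncation $T_n(\sigma) \coloneqq \sigma^{(n)}$ (so that $\varphi(\sigma^{(n)}) = \varphi_n(\sigma)$), and reduce to showing $T_n$ is continuous. The delicate feature is that $T_n$ fails to be locally constant near the ``short'' sequences in $\bigcup_{m<n}\Sigma_m$, where a digit degenerates to $\infty$, so a purely cylinder-based, locally-constant argument does not suffice and one must genuinely use the topology $\mathcal{T}_\Sigma$. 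My plan is to exploit the map $g \colon \mathbb{N}_\infty^{\mathbb{N}} \to (\Sigma,\mathcal{T}_\Sigma)$ of Proposition \ref{main results of Ahn25}(\ref{g is continuous}). Since $\mathbb{N}_\infty^{\mathbb{N}}$ is compact (Tychonoff, $\mathbb{N}_\infty$ being a one-point compactification) and $(\Sigma,\mathcal{T}_\Sigma)$ is compact metrizable, hence Hausdorff, the continuous surjection $g$ (onto, since $g$ restricts to the identity on $\Sigma$) is closed, and therefore a quotient map; consequently a map out of $(\Sigma,\mathcal{T}_\Sigma)$ is continuous exactly when its precomposition with $g$ is continuous. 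It then suffices to check that $T_n \circ g$ is continuous on the product space. Writing $R_n \colon \mathbb{N}_\infty^{\mathbb{N}} \to \mathbb{N}_\infty^{\mathbb{N}}$ for the product-space truncation that keeps the first $n$ entries and sets the rest to $\infty$, which is continuous because each output coordinate is either a coordinate projection or a constant, a short case check, according to whether the first $\infty$ in $\sigma$ occurs before or after position $n$, establishes the identity $T_n \circ g = g \circ R_n$. The right-hand side is a composition of continuous maps, so $T_n \circ g$ is continuous, and the quotient property gives the continuity of $T_n$, hence of $\varphi_n = \varphi \circ T_n$.

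With (a) and (b) in hand the proof concludes: each partial sum $S_N$ is continuous, $S_N \to \mathcal{E}^*$ uniformly, and so $\mathcal{E}^*$ is continuous on $(\Sigma, \mathcal{T}_\Sigma)$. The only points demanding care are the verification of $T_n \circ g = g \circ R_n$ together with the quotient-map reduction; everything else is routine once the uniform bound has been recorded.
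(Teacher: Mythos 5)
Your proposal is correct and follows the same route as the paper: uniform convergence of the series (via the bound from Proposition \ref{metric properties}(\ref{metric properties(v)})) combined with continuity of each summand $\varphi - \varphi_n$. The only difference is that you carefully justify the continuity of $\varphi_n$ (via the quotient-map property of $g$ and the identity $T_n \circ g = g \circ R_n$, which checks out), whereas the paper simply asserts this ``by construction''; your extra detail is sound and arguably fills a gap the paper leaves to the reader.
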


\begin{proof}
As noted following the definition of $\mathcal{E}^*$, the series $\mathcal{E}^*(\sigma) \coloneqq \sum_{n \geq 1} ( \varphi (\sigma) - \varphi_n (\sigma))$ is absolutely and uniformly convergent on $\Sigma$. Each map $\varphi_n \colon (\Sigma, \mathcal{T}_\Sigma) \to [0,1]$ is continuous by construction, and $\varphi \colon (\Sigma, \mathcal{T}_\Sigma) \to [0,1]$ is continuous by Proposition \ref{main results of Ahn25}(\ref{phi is continuous}). Hence, each summand $\varphi-\varphi_n$ is continuous on $\Sigma$. The uniform convergence of continuous functions implies that their sum $\mathcal{E}^*$ is continuous as well.
\end{proof}

\begin{lemma} \label{E star value difference of sigma and sigma prime}
Let $\sigma \coloneqq (\sigma_k)_{k=1}^n \in \Sigma_n \cap \Sigma_{\realizable}$ for some $n \in \mathbb{N}$, and define $\sigma' \coloneqq (\sigma_1, \dotsc, \sigma_{n-1}, \sigma_n-1, 1) \in \Sigma_{n+1} \setminus \Sigma_{\realizable}$. Then
\[
\mathcal{E}^*(\sigma') = \mathcal{E}^*(\sigma) + \frac{(-1)^n}{q_n^* (\sigma)(q_n^* (\sigma) - q_{n-1}^* (\sigma))}.
\]
\end{lemma}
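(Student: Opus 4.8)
The plan is to compute the difference $\mathcal{E}^*(\sigma') - \mathcal{E}^*(\sigma)$ directly, exploiting the fact that $\sigma$ and $\sigma'$ represent the same rational number and that their convergents agree except at a single index. First I would record the relevant structural facts. Since $\sigma \in \Sigma_n$ and $\sigma' \in \Sigma_{n+1}$, Proposition \ref{inverse image of phi}(\ref{inverse image of phi(i)}) gives $\varphi(\sigma') = \varphi(\sigma)$, so both error sums are formed over a common limiting value, which I abbreviate as $x \coloneqq \varphi(\sigma) = p_n^*(\sigma)/q_n^*(\sigma)$. By Proposition \ref{metric properties}(\ref{metric properties(ii)}), the convergents satisfy $\varphi_k(\sigma') = \varphi_k(\sigma)$ for all $0 \leq k \leq n-1$, while at the index $n$ one has
\[
\varphi_n(\sigma') = \frac{p_n^*(\sigma) - p_{n-1}^*(\sigma)}{q_n^*(\sigma) - q_{n-1}^*(\sigma)},
\]
and $\varphi_k(\sigma') = x$ for all $k \geq n+1$.

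Next I would truncate both series. Because a finite CF sequence eventually stabilizes at its value, the identity $\varphi_k(\sigma) = x$ for $k \geq n$ forces $\mathcal{E}^*(\sigma) = \sum_{k=0}^{n-1}(x - \varphi_k(\sigma))$, while $\varphi_k(\sigma') = x$ for $k \geq n+1$ forces $\mathcal{E}^*(\sigma') = \sum_{k=0}^{n}(x - \varphi_k(\sigma'))$. Subtracting, and using that $\varphi_k(\sigma') = \varphi_k(\sigma)$ for $0 \leq k \leq n-1$, every term with index at most $n-1$ cancels, leaving the single surviving term
\[
\mathcal{E}^*(\sigma') - \mathcal{E}^*(\sigma) = x - \varphi_n(\sigma').
\]

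Finally I would evaluate this term by combining fractions. Substituting $x = p_n^*(\sigma)/q_n^*(\sigma)$ and the expression for $\varphi_n(\sigma')$ above, a cross-multiplication collapses the numerator to $q_n^*(\sigma) p_{n-1}^*(\sigma) - p_n^*(\sigma) q_{n-1}^*(\sigma)$, which by the determinant identity of Proposition \ref{metric properties}(\ref{metric properties(iv)}) (applied with $k = n-1$) equals $(-1)^n$; the denominator is $q_n^*(\sigma)(q_n^*(\sigma) - q_{n-1}^*(\sigma))$. This yields precisely the claimed formula. I do not anticipate any genuine obstacle, as the statement is essentially a bookkeeping computation: the only point requiring care is the index accounting---confirming that the convergents genuinely coincide for $k \leq n-1$ so that exactly one term survives---after which the evaluation is a routine application of the standard $(-1)^k$ relation between consecutive convergents.
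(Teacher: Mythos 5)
Your proposal is correct and follows essentially the same route as the paper's proof: split off the index-$n$ term, use Proposition \ref{metric properties}(\ref{metric properties(ii)}) to see that all other terms cancel or vanish, and evaluate $\varphi_n(\sigma)-\varphi_n(\sigma')$ via the determinant identity of Proposition \ref{metric properties}(\ref{metric properties(iv)}) with $k=n-1$. The index accounting and the sign $(-1)^n$ both check out.
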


\begin{proof}
By the definition of $\mathcal{E}^*$, we write
\begin{align*}
\mathcal{E}^* (\sigma')
&= \sum_{0 \leq k < n} (\varphi (\sigma') - \varphi_k(\sigma')) + (\varphi (\sigma')-\varphi_{n} (\sigma')) + \sum_{k \geq n+1} (\varphi (\sigma') - \varphi_k(\sigma')). 
\end{align*}
Note that 
\[
\begin{cases}
\varphi_k(\sigma) = \varphi_k(\sigma'), &\text{for } 0 \leq k \leq n-1; \\
\varphi(\sigma') = \varphi_{k+1}(\sigma') = \varphi_k (\sigma) = \varphi(\sigma), &\text{for } k \geq n.
\end{cases}
\]
Then, the infinite tail sum vanishes, and we obtain
\begin{align*}
\mathcal{E}^* (\sigma')
&= \sum_{0 \leq k < n} (\varphi(\sigma) - \varphi_k(\sigma)) + (\varphi_n (\sigma) - \varphi_{n} (\sigma')) \\
&= \mathcal{E}^*(\sigma) + (\varphi_n (\sigma) - \varphi_{n} (\sigma')).
\end{align*}
To compute the difference, note from Proposition \ref{metric properties}(\ref{metric properties(ii)}) that
\[
\varphi_n(\sigma') = \frac{p_n^*(\sigma')}{q_n^*(\sigma')} = \frac{p_n^* (\sigma) - p_{n-1}^* (\sigma)}{q_n^* (\sigma) - q_{n-1}^* (\sigma)}.
\]
Hence,
\begin{align*}
\varphi_{n} (\sigma) - \varphi_n (\sigma')
&= \frac{p_n^* (\sigma)}{q_n^* (\sigma)} - \frac{p_n^* (\sigma')}{q_n^* (\sigma')}
= \frac{p_n^* (\sigma)}{q_n^* (\sigma)} - \frac{p_n^* (\sigma) - p_{n-1}^* (\sigma)}{q_n^* (\sigma) - q_{n-1}^* (\sigma)} \\
&= \frac{- p_n^* (\sigma) q_{n-1}^* (\sigma) + p_{n-1}^* (\sigma) q_n^* (\sigma)}{q_n^* (\sigma) (q_n^* (\sigma) - q_{n-1}^* (\sigma))}
= \frac{(-1)^n}{q_n^* (\sigma) (q_n^* (\sigma) - q_{n-1}^* (\sigma))},
\end{align*}
where the last equality follows from Proposition \ref{metric properties}(\ref{metric properties(iv)}). This completes the proof.
\end{proof}

\begin{lemma} \label{bounds for E star in Upsilon sigma}
Let $\sigma \in \Sigma_n$ for some $n \in \mathbb{N}$. Then the following hold.
\begin{enumerate}[label = \upshape(\roman*), ref = \roman*, leftmargin=*, widest=ii]
\item
If $n$ is odd, then
\[
\max_{\tau \in \Upsilon_\sigma} \mathcal{E}^*(\tau) = \mathcal{E}^*(\sigma)
\quad \text{and} \quad
\min_{\tau \in \Upsilon_\sigma} \mathcal{E}^*(\tau) = \mathcal{E}^* (\sigma) - (n+1) \cdot \mathcal{L} (I_\sigma).
\]

\item
If $n$ is even, then
\[
\max_{\tau \in \Upsilon_\sigma} \mathcal{E}^*(\tau) = \mathcal{E}^*(\sigma) +  (n+1) \cdot \mathcal{L} (I_\sigma)
\quad \text{and} \quad
\min_{\tau \in \Upsilon_\sigma} \mathcal{E}^*(\tau) = \mathcal{E}^* (\sigma).
\]
\end{enumerate}
\end{lemma}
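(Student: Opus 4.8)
The plan is to combine the compactness of the cylinder set $\Upsilon_\sigma$ (Proposition \ref{main results of Ahn25}(\ref{cylinder set is compact})) with the continuity of $\mathcal{E}^*$ (Lemma \ref{E star is continuous}), so that the extrema are attained, and then to pin down both the extremal values and the extremizing sequences explicitly. The starting point is that every $\tau \in \Upsilon_\sigma$ agrees with $\sigma$ in its first $n$ digits, whence $\varphi_k(\tau) = \varphi_k(\sigma)$ for $0 \le k \le n$ and in particular $\varphi_n(\tau) = \varphi(\sigma)$. Splitting the defining series of $\mathcal{E}^*$ at the index $n$ and subtracting $\mathcal{E}^*(\sigma) = \sum_{k=0}^{n-1}(\varphi(\sigma) - \varphi_k(\sigma))$, whose tail vanishes because $\sigma \in \Sigma_n$, I would first establish the master identity
\[
\mathcal{E}^*(\tau) - \mathcal{E}^*(\sigma) = (n+1)\, b_n + S', \qquad b_n \coloneqq \varphi(\tau) - \varphi(\sigma), \quad S' \coloneqq \sum_{k \ge n+1}(\varphi(\tau) - \varphi_k(\tau)).
\]
Here the factor $n+1$ arises from the $n$ identical contributions $\varphi(\tau) - \varphi(\sigma)$ coming from the first $n$ summands, together with the single term indexed by $k = n$.

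The second step is to control the two pieces by means of Lemma \ref{sequence lemma}. Writing $\varphi(\tau) - \varphi_k(\tau) = (-1)^k a_k$ with $a_k \ge 0$ nonincreasing, I obtain $(-1)^n b_n \ge 0$ and $|b_{n+1}| \le |b_n|$, while $S'$ is the remainder of an alternating series with nonincreasing terms; by the standard alternating-series estimate it satisfies $(-1)^{n+1} S' \in [0, a_{n+1}]$, so $S'$ has the sign of $b_{n+1}$ and $|S'| \le |b_{n+1}| \le |b_n|$. I would also record that $|b_n| = |\varphi(\tau) - \varphi(\sigma)| \le \mathcal{L}(I_\sigma)$: indeed $\varphi(\Upsilon_\sigma) \subseteq \overline{I_\sigma}$ by the density of $f(I_\sigma)$ in $\Upsilon_\sigma$ (Lemma \ref{f image of fundamental interval}(\ref{f image of fundamental interval is dense})) together with the continuity of $\varphi$, and $\varphi(\sigma)$ is an endpoint of the interval $I_\sigma$ of length $\mathcal{L}(I_\sigma)$ by Proposition \ref{I sigma}.

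With these estimates in hand, the two cases follow by sign bookkeeping. When $n$ is even, $b_n \ge 0$ and $S' \le 0$, so on the one hand $\mathcal{E}^*(\tau) - \mathcal{E}^*(\sigma) \le (n+1)b_n \le (n+1)\mathcal{L}(I_\sigma)$, and on the other hand, using $S' \ge -|b_n| = -b_n$, we get $\mathcal{E}^*(\tau) - \mathcal{E}^*(\sigma) \ge n\, b_n \ge 0$; the case $n$ odd is entirely symmetric and yields $-(n+1)\mathcal{L}(I_\sigma) \le \mathcal{E}^*(\tau) - \mathcal{E}^*(\sigma) \le 0$. To see that the bounds are sharp I would exhibit the extremizers: the value $\mathcal{E}^*(\sigma)$ is attained at $\tau = \sigma$, and the opposite extreme at $\tau_\ast \coloneqq (\sigma_1, \dotsc, \sigma_n, 1) \in \Upsilon_\sigma$. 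For $\tau_\ast$ the tail $S'$ vanishes (since $\tau_\ast \in \Sigma_{n+1}$), and a short recurrence computation via Proposition \ref{metric properties}(\ref{metric properties(i)}) gives $\varphi(\tau_\ast) = \varphi(\widehat{\sigma})$, so the master identity collapses to $\mathcal{E}^*(\tau_\ast) - \mathcal{E}^*(\sigma) = (n+1)(\varphi(\widehat{\sigma}) - \varphi(\sigma)) = \pm (n+1)\mathcal{L}(I_\sigma)$, with the sign governed by the parity of $n$ through Proposition \ref{I sigma}. Note that realizability of $\sigma$ is never used, so the argument is uniform over $\Sigma_n$.

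I expect the principal difficulty to lie not in any single estimate but in the bookkeeping that couples the alternating-series remainder bound on $S'$ with the combinatorial factor $n+1$: one must verify that the potential loss $|S'| \le |b_n|$ is exactly offset, so that the global extremes coincide with the explicitly computed values at $\sigma$ and $\tau_\ast$ rather than being strictly interior, and that the parity conventions for $\overline{I_\sigma}$ supplied by Proposition \ref{I sigma} are threaded consistently through both cases. The remaining ingredients—attainment from compactness and continuity, the containment $\varphi(\Upsilon_\sigma) \subseteq \overline{I_\sigma}$, and the identity $\varphi(\tau_\ast) = \varphi(\widehat{\sigma})$—are routine.
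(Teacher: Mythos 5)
Your proof is correct and follows essentially the same route as the paper: both decompose $\mathcal{E}^*(\tau)$ at index $n$ into $\mathcal{E}^*(\sigma)+(n+1)(\varphi(\tau)-\varphi(\sigma))$ plus an alternating tail controlled by Lemma \ref{sequence lemma}, bound $|\varphi(\tau)-\varphi(\sigma)|$ by $\mathcal{L}(I_\sigma)$ via Proposition \ref{I sigma}, and exhibit the extremizers $\sigma$ and $(\sigma_1,\dotsc,\sigma_n,1)$. The only cosmetic difference is that you run both inequalities through the single master identity (using $|S'|\leq|b_n|$ for the bound at $\mathcal{E}^*(\sigma)$), whereas the paper proves that direction by comparing $\varphi(\tau)$ with $\varphi_n(\tau)$ directly; the compactness/continuity remark is harmless but not needed once the extremizers are explicit.
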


\begin{proof}
Since the two cases are symmetric, we prove only the first part. Suppose $n$ is odd, and let $\tau \in \Upsilon_\sigma$. Then $\tau^{(n)} = \sigma$ by definition of $\Upsilon_\sigma$, so $q_k^*(\sigma) = q_k^*(\tau)$ for all $1 \leq k \leq n$.

We begin with the upper bound. By Lemma \ref{sequence lemma}, the tail sum $\sum_{k \geq n} (\varphi(\tau) - \varphi_k(\tau))$ alternates in sign and decreases in absolute value, so we can group the terms to obtain
\begin{align*}
\mathcal{E}^*(\tau)
&= \sum_{0 \leq k < n} (\varphi (\tau) - \varphi_k (\tau)) - \sum_{j \geq 0} (|\varphi (\tau)-\varphi_{n+2j}(\tau)| -  |\varphi(\tau) - \varphi_{n+(2j+1)}(\tau)|) \\
&\leq \sum_{0 \leq k < n} (\varphi (\tau) - \varphi_k (\tau)).
\end{align*}
Now, since $\varphi(\tau) - \varphi_n (\tau) \leq 0$ by Proposition \ref{metric properties}(\ref{metric properties(v)}), we have
\[
\sum_{0 \leq k < n} (\varphi (\tau) - \varphi_k (\tau)) \leq \sum_{0 \leq k < n} (\varphi_n (\tau) - \varphi_k (\tau)) = \sum_{0 \leq k < n} (\varphi_n (\sigma) - \varphi_k (\sigma)),
\]
where the last equality holds since $\sigma^{(n-1)} = \tau^{(n-1)}$. Clearly, the rightmost term equals $\sum_{0 \leq k \leq n} (\varphi_n (\sigma) - \varphi_k (\sigma))$, and since $\sigma \in \Sigma_n$, we have $\varphi_n(\sigma) = \varphi(\sigma)$, so
\[
\sum_{0 \leq k \leq n} (\varphi_n (\sigma) - \varphi_k (\sigma))
= \sum_{0 \leq k \leq n} (\varphi (\sigma) - \varphi_k (\sigma))
= \mathcal{E}^*(\sigma).
\]
Therefore, we conclude that $\mathcal{E}^*(\tau) \leq \mathcal{E}^*(\sigma)$ for any $\tau \in \Upsilon_\sigma$, with the equality attained when $\tau = \sigma$.

We now prove the lower bound. For any $\tau \in \Upsilon_\sigma$, we have
\begin{align*}
\mathcal{E}^*(\tau)
&= \sum_{0 \leq k \leq n} (\varphi (\tau) - \varphi_k (\tau)) + \sum_{k \geq n+1} (\varphi (\tau) - \varphi_k (\tau)) \\
&= \sum_{0 \leq k \leq n} (\varphi (\sigma) - \varphi_k (\sigma)) +  (n+1) (\varphi (\tau) - \varphi (\sigma)) + \sum_{k \geq n+1} (\varphi (\tau) - \varphi_k (\tau)).
\end{align*}
Again applying Lemma \ref{sequence lemma}, the final infinite sum is non-negative; that is,
\[
\sum_{k \geq n+1} (\varphi (\tau) - \varphi_k (\tau)) = \sum_{j \geq 1} \big( |\varphi (\tau) - \varphi_{n+(2j-1)}(\tau)| -  |\varphi (\tau) - \varphi_{n+(2j)}(\tau)| \big) \geq 0.
\]
Hence,
\[
\mathcal{E}^*(\tau) \geq \mathcal{E}^*(\sigma) +  (n+1) (\varphi (\tau) - \varphi (\sigma)).
\]
Now, since $\varphi (\tau) \in \overline{I_\sigma}$ and $\varphi (\sigma)$ is the right endpoint of the interval $I_\sigma$ by Proposition \ref{I sigma}, we have $\varphi(\tau) - \varphi(\sigma) \geq - \mathcal{L} (I_\sigma)$. Therefore, we conclude that $\mathcal{E}^*(\tau) \geq \mathcal{E}^*(\sigma) - (n+1) \mathcal{L} (I_\sigma)$ for any $\tau \in \Upsilon_\sigma$. Moreover, equality is attained when $\tau = (\sigma_1, \dotsc, \sigma_n, 1) \in \Upsilon_\sigma \cap \Sigma_{n+1}$, since in this case $\varphi (\tau) = \varphi (\widehat{\sigma})$ is the left endpoint of the interval $I_\sigma$, where $\widehat{\sigma} = (\sigma_1, \dotsc, \sigma_{n-1}, \sigma_n+1) \in \Sigma_n$ by Proposition \ref{I sigma}. Thus, $\varphi (\tau) - \varphi (\sigma) = -\mathcal{L} (I_\sigma)$, and this completes the proof of the lemma.
\end{proof}

\begin{lemma} \label{sup of difference of E in I sigma}
Let $\sigma \in \Sigma_n$ for some $n \in \mathbb{N}$. Then
\[
\sup_{t,u \in I_\sigma} |\mathcal{E}(t) - \mathcal{E}(u)| =  (n+1) \mathcal{L} (I_\sigma) .
\]
\end{lemma}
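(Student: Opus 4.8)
The plan is to transfer the problem from the interval $I_\sigma$ to the cylinder set $\Upsilon_\sigma$ via the coding map $f$ and the factorization $\mathcal{E} = \mathcal{E}^* \circ f$ from Lemma~\ref{commutative diagram for E}. Since $f(I_\sigma) \subseteq \Upsilon_\sigma$, rewriting $\mathcal{E}(t) = \mathcal{E}^*(f(t))$ shows that the quantity to be computed is exactly the diameter (in $\mathbb{R}$) of the set $\mathcal{E}^*(f(I_\sigma))$, namely $\sup_{\alpha,\beta \in f(I_\sigma)} |\mathcal{E}^*(\alpha) - \mathcal{E}^*(\beta)|$. The target value $(n+1)\mathcal{L}(I_\sigma)$ is precisely the spread $\max_{\tau \in \Upsilon_\sigma}\mathcal{E}^*(\tau) - \min_{\tau \in \Upsilon_\sigma}\mathcal{E}^*(\tau)$ supplied by Lemma~\ref{bounds for E star in Upsilon sigma}, where both parities of $n$ yield the same difference. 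So the whole argument reduces to comparing the diameter of $\mathcal{E}^*$ over the dense subset $f(I_\sigma)$ with its diameter over the entire cylinder $\Upsilon_\sigma$.

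For the upper bound I would argue directly: for every $t,u \in I_\sigma$ we have $f(t), f(u) \in \Upsilon_\sigma$, so $|\mathcal{E}(t) - \mathcal{E}(u)| = |\mathcal{E}^*(f(t)) - \mathcal{E}^*(f(u))| \le \max_{\tau \in \Upsilon_\sigma}\mathcal{E}^*(\tau) - \min_{\tau \in \Upsilon_\sigma}\mathcal{E}^*(\tau)$, which equals $(n+1)\mathcal{L}(I_\sigma)$ by Lemma~\ref{bounds for E star in Upsilon sigma}. Taking the supremum over $t,u \in I_\sigma$ gives $\sup_{t,u \in I_\sigma} |\mathcal{E}(t) - \mathcal{E}(u)| \le (n+1)\mathcal{L}(I_\sigma)$.

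For the matching lower bound, the key point---and the main subtlety---is that the two extremizers of $\mathcal{E}^*$ on $\Upsilon_\sigma$ are typically \emph{not} in the image $f(I_\sigma)$: the extremizing sequence $(\sigma_1,\dots,\sigma_n,1)$ identified in Lemma~\ref{bounds for E star in Upsilon sigma} has trailing digit $1$ and is therefore non-realizable by Proposition~\ref{CF realizable sequence}, while $\sigma$ itself lies in $f(I_\sigma)$ only when $\sigma_n > 1$. Consequently the value $(n+1)\mathcal{L}(I_\sigma)$ is genuinely a supremum, not a maximum, and cannot be attained by any single pair $t,u \in I_\sigma$. I would resolve this by density and continuity: by Lemma~\ref{f image of fundamental interval}(\ref{f image of fundamental interval is dense}) we have $\overline{f(I_\sigma)} = \Upsilon_\sigma$, so there exist sequences in $f(I_\sigma)$ converging in $(\Sigma,\mathcal{T}_\Sigma)$ to the extremizers $\tau_{\max}$ and $\tau_{\min}$; since $\mathcal{E}^*$ is continuous (Lemma~\ref{E star is continuous}), the corresponding values of $\mathcal{E}^* \circ f$ converge to $\max_{\tau \in \Upsilon_\sigma}\mathcal{E}^*(\tau)$ and $\min_{\tau \in \Upsilon_\sigma}\mathcal{E}^*(\tau)$. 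Choosing preimages $t_k, u_k \in I_\sigma$ of these approximating codes then gives $|\mathcal{E}(t_k) - \mathcal{E}(u_k)| \to (n+1)\mathcal{L}(I_\sigma)$, forcing the supremum to be at least $(n+1)\mathcal{L}(I_\sigma)$.

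Combining the two bounds yields the claimed equality. Equivalently, one can phrase the entire computation as the chain $\operatorname{diam}\mathcal{E}^*(f(I_\sigma)) = \operatorname{diam}\overline{\mathcal{E}^*(f(I_\sigma))} = \operatorname{diam}\mathcal{E}^*(\Upsilon_\sigma)$, using that the diameter of a subset of $\mathbb{R}$ equals that of its closure, that $\mathcal{E}^*$ is continuous with $\Upsilon_\sigma$ compact (Proposition~\ref{main results of Ahn25}(\ref{cylinder set is compact})), and that $\overline{f(I_\sigma)} = \Upsilon_\sigma$; the final diameter is then read off directly from Lemma~\ref{bounds for E star in Upsilon sigma}. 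I expect the only real work to be the density-plus-continuity step, which is exactly what upgrades the trivial inclusion bound into an equality even though the extremizers lie outside $f(I_\sigma)$.
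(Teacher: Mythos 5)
Your proposal is correct and follows essentially the same route as the paper: factor $\mathcal{E} = \mathcal{E}^* \circ f$, use the density of $f(I_\sigma)$ in $\Upsilon_\sigma$ together with the continuity of $\mathcal{E}^*$ to equate the supremum and infimum over $f(I_\sigma)$ with those over $\Upsilon_\sigma$, and read off the difference from Lemma~\ref{bounds for E star in Upsilon sigma}. Your explicit discussion of why the extremizers lie outside $f(I_\sigma)$ (so the value is a supremum, not a maximum) makes precise a point the paper leaves implicit.
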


\begin{proof}
By Lemma \ref{commutative diagram for E}, we have $\mathcal{E} = \mathcal{E}^* \circ f$ on $[0,1)$. Hence, for all $t \in I_\sigma$, we have $\mathcal{E}(t) = \mathcal{E}^*(f(t))$. Since $f(I_\sigma)$ is dense in $\Upsilon_\sigma$ by Lemma \ref{f image of fundamental interval}(\ref{f image of fundamental interval is dense}), the supremum and infimum over $f(I_\sigma)$ equal those over $\Upsilon_\sigma$. Therefore,
\[
\sup_{t \in I_\sigma} \mathcal{E}(t) = \sup_{\tau \in \Upsilon_\sigma} \mathcal{E}^*(\tau)
\quad \text{and} \quad
\inf_{t \in I_\sigma} \mathcal{E}(t) = \inf_{\tau \in \Upsilon_\sigma} \mathcal{E}^*(\tau),
\]
and thus
\[
\sup_{t,u \in I_\sigma} |\mathcal{E}(t) - \mathcal{E}(u)|
= \max_{\tau \in \Upsilon_\sigma} \mathcal{E}^*(\tau) - \min_{\tau \in \Upsilon_\sigma} \mathcal{E}^*(\tau)
= (n+1) \mathcal{L}(I_\sigma),
\]
by Lemma \ref{bounds for E star in Upsilon sigma}, as claimed.
\end{proof}

\section{Proofs of the main results} \label{Proofs of the main results}

In this section, we prove the main theorems stated in the introduction. Subsection \ref{sec:graphE} is devoted to the proving that the graph of $\mathcal{E}(x)$ has Hausdorff dimension $1$ (Theorem \ref{Hausdorff dimension of graph of E}), which is the central result of this paper. In Subsection \ref{sec:graphP}, we briefly revisit the relative error-sum function $P(x)$ and present a known upper bound for the Hausdorff dimension of its graph (Theorem \ref{Hausdorff dimension of graph of P}).

\subsection{Hausdorff dimension of the graph of $\mathcal{E}(x)$} \label{sec:graphE}

In computing the $s$-dimensional Hausdorff measure of the graph of $\mathcal{E}$, we analyze the double sum
\[
S \coloneqq \sum_{n \geq 1} \sum_{\sigma \in \Sigma_n} \Phi \left(q_n^*(\sigma), q_{n-1}^*(\sigma)\right),
\]
where $\Phi$ is a non-negative function on $\mathbb{R}^2$. For each fixed integer $j \coloneqq q_n^*(\sigma)$ (with some $\sigma \in \Sigma_n$), each integer $k \in \{0,\dotsc,j-1\}$ with $\gcd(j,k)=1$ appears exactly twice in the double sum---except for the special case when $q_1^*(\sigma)=1$ for $\sigma\in\Sigma_1$, which contributes only once. We illustrate this fact with an example below; see also \cite[p.~9]{Els11}.

We now justify the counting claim used above.

\begin{lemma}\label{pair correspondence}
Fix coprime integers $j>k\ge0$. Then the following hold.
\begin{enumerate}[label = \upshape(\roman*), ref = \roman*, leftmargin=*, widest=ii]
\item \label{pair correspondence(i)}
If $1\le k<j$, then there exist exactly two finite continued–fraction (CF) sequences
$\sigma\in\Sigma_n$ and $\sigma'\in\Sigma_{n+1}$ such that
\[
\left(q_n^*(\sigma),q_{n-1}^*(\sigma)\right)=\left(q_{n+1}^*(\sigma'),q_{n}^*(\sigma')\right)=(j,k).
\]
\item \label{pair correspondence(ii)}
The pair $(1,1)$ occurs exactly once, via $\sigma=(1)\in\Sigma_1$.
\item \label{pair correspondence(iii)}
The pair $(1,0)$ does not occur for any $\sigma\in\bigcup_{n\ge1}\Sigma_n$.
\end{enumerate}
\end{lemma}

\begin{proof}
(\ref{pair correspondence(i)})
Let $1\le k<j$ with $\gcd(j,k)=1$. Write the CF expansion of $k/j$:
\[
\frac{k}{j}=[b_1,\dots,b_m]\quad\text{with }b_m>1.
\]
Rationals admit exactly two finite simple continued fractions: the regular one above and the variant obtained by decreasing the last digit and appending $1$,
\[
\frac{k}{j}=[b_1,\dots,b_m-1,1]
\]
(see \cite[Theorem~162]{HW08}). Consider the two digit lists reversed:
\[
\sigma\coloneqq(b_m,\dots,b_1)\in\Sigma_m,
\qquad
\sigma'\coloneqq(1,b_m-1,b_{m-1},\dots,b_1)\in\Sigma_{m+1}.
\]
By the reversal identity (see \cite[p.~15]{IK02}),
\[
\frac{q_{m-1}^*(\sigma)}{q_m^*(\sigma)}=[b_1,\dots,b_m]=\frac{k}{j},
\]
hence $( q_m^*(\sigma),q_{m-1}^*(\sigma) )=(j,k)$. Using the elementary relation for replacing the last digit by $(a_n-1,1)$ (cf. Proposition~\ref{metric properties}(\ref{metric properties(ii)})), we get $( q_{m+1}^*(\sigma'),q_m^*(\sigma') )=(j,k)$ as well.

For uniqueness, suppose $\tau\in\Sigma_{n}$ also satisfies $(q_n^*(\tau),q_{n-1}^*(\tau))=(j,k)$. Then the reversal identity gives
\[
\frac{k}{j}=\frac{q_{n-1}^*(\tau)}{q_n^*(\tau)}=[c_n,\dots,c_1],
\]
so $[c_n,\dotsc,c_1]$ is a simple finite continued fraction of $k/j$. Since a rational has exactly the two finite expressions listed above, $\tau$ must be one of $\sigma$ or $\sigma'$. Thus there are exactly two such sequences.

(\ref{pair correspondence(ii)})
For $\sigma=(1)\in\Sigma_1$ we have $q_1^*(\sigma)=1$ and $q_0^*(\sigma)=1$, giving the single occurrence of $(1,1)$; no longer sequence can produce $(1,1)$ because $q_{n-1}^*\ge1$ and $q_n^*>q_{n-1}^*$ for $n\ge2$.

(\ref{pair correspondence(iii)})
For any $\sigma\in\Sigma_1$, $q_0^*(\sigma)=1$ by definition, so $(1,0)$ cannot occur at level $n=1$; for $n\ge2$, $q_{n-1}^*\ge1$, so $(1,0)$ never occurs.
\end{proof}

This explains precisely why each coprime pair $(j,k)$ with $1\le k<j$
contributes twice in the double sum, and why $(1,0)$ contributes only once.

\begin{example} \label{double sum counting}
Let $q_n^*(\sigma)=10$ for some $\sigma \in \Sigma_n$, and consider $k=3$, which satisfies $\gcd(10,3)=1$ and lies in $\{0,\dotsc,9\}$. We observe that
\[
\frac{3}{10} = [0; 3,3, \infty, \infty, \dotsc] = [0; 3, 2, 1, \infty, \infty, \dotsc].
\]
From this, we find that
\[
q_2^*((3,3)) = 10,\; q_1^*((3,3)) = 3, \quad \text{and} \quad q_3^*((1,2,3)) = 10,\; q_2^*((1,2,3)) = 3.
\]
Thus, the $\Phi$-value for the pair $(10,3)$ appears exactly twice in the double sum, as claimed.

Now consider the case $q_n^*(\sigma)=1$ for some $\sigma \coloneqq (\sigma_k)_{k \in \mathbb{N}} \in \Sigma_n$. If $n \geq 2$, then by Proposition \ref{metric properties}(\ref{metric properties(iii)}), we have $q_2^*(\sigma) \geq F_3 = 2$, contradicting $q_n^*(\sigma)=1$. Therefore, we must have $n=1$, in which case $q_1^*(\sigma) = \sigma_1 = 1$, implying $\sigma=(1)\in\Sigma_1$. This explains the unique appearance of the exceptional case.
\end{example}

By Lemma \ref{pair correspondence}, each coprime pair $(j,k)$ with $1\le k<j$
is represented twice among the pairs $\bigl(q_n^*(\sigma),q_{n-1}^*(\sigma)\bigr)$, while
$(1,0)$ appears only once. The next lemma therefore reformulates the
double sum $S$ accordingly; it generalizes equation~(9) in \cite[p.~9]{Els11}.

\begin{lemma} \label{summation conversion formula lemma}
Let $\Phi \colon \mathbb{R}^2 \to [0, \infty)$ be a non-negative function. Then
\begin{align} \label{summation conversion formula}
S = -2 \Phi (1,0) + \Phi (1,1) + 2 \sum_{j \geq 1} \sum_{\substack{0 \leq k < j \\ \gcd(j,k)=1}} \Phi (j,k).
\end{align}
\end{lemma}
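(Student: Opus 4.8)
The plan is to reorganize the double sum $S$ as a single sum over all finite CF sequences and then determine, for each admissible pair of denominators, how many sequences produce it. Writing $\mathcal{S} \coloneqq \bigcup_{n \geq 1} \Sigma_n$ and, for $\sigma \in \Sigma_n$, setting $Q(\sigma) \coloneqq (q_n^*(\sigma), q_{n-1}^*(\sigma))$, I would first record that every pair $(j,k) \coloneqq Q(\sigma)$ satisfies $\gcd(j,k) = 1$ and $1 \leq k \leq j$, with $k = j$ occurring only for $\sigma = (1) \in \Sigma_1$ (so that $Q(\sigma) = (1,1)$). Coprimality is immediate from the recurrence in Proposition \ref{metric properties}(\ref{metric properties(i)}) together with $q_0^*(\sigma) = 1$ (or from the determinant identity in Proposition \ref{metric properties}(\ref{metric properties(iv)})); the bound $q_{n-1}^* \leq q_n^*$, strict for $n \geq 2$, follows from $q_n^* = \sigma_n q_{n-1}^* + q_{n-2}^*$ and $q_{n-2}^* \geq 1$, while for $n = 1$ one has $q_1^* = \sigma_1 \geq 1 = q_0^*$. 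In particular $k \geq 1$ always, so the pair $(1,0)$ never arises. Since $\Phi \geq 0$, all rearrangements are unconditionally justified, and we may write $S = \sum_{(j,k)} N(j,k)\,\Phi(j,k)$, where $N(j,k) \coloneqq \#\,Q^{-1}(\{(j,k)\})$.

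The crux --- and the step I expect to be the main obstacle --- is the exact evaluation of the fibre counts $N(j,k)$. Here I would exploit the classical identity $q_n^*(\sigma)/q_{n-1}^*(\sigma) = [\sigma_n; \sigma_{n-1}, \dotsc, \sigma_1]$, a consequence of the matrix representation of convergents (see \cite{HW08}); equivalently, $q_{n-1}^*(\sigma)/q_n^*(\sigma) = [0; \sigma_n, \dotsc, \sigma_1]$. Because $\gcd(q_n^*, q_{n-1}^*) = 1$, this says precisely that $Q(\sigma) = (j,k)$ if and only if the reversed sequence $(\sigma_n, \dotsc, \sigma_1)$ is a finite CF expansion of $k/j$ into positive integers. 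As reversal is a bijection of $\mathcal{S}$, the count $N(j,k)$ equals the number of finite CF expansions of the rational $k/j$ with all partial quotients in $\mathbb{N}$. For $j \geq 2$ and $1 \leq k < j$ with $\gcd(j,k) = 1$, so that $k/j \in (0,1)$, the classical two-representation theorem for rationals (\cite[Theorem~162]{HW08}; cf.\ Proposition~\ref{inverse image of phi}(\ref{inverse image of phi(i)})) furnishes exactly two such expansions, whence $N(j,k) = 2$. This is exactly the doubling phenomenon illustrated in Example \ref{double sum counting}, read through the reversal. For the boundary value $k/j = 1$, i.e.\ $(j,k) = (1,1)$, the only expansion into positive integers is $[0;1]$, so $N(1,1) = 1$.

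Finally I would assemble the pieces. The two previous paragraphs give
\[
S = 2 \sum_{j \geq 2} \sum_{\substack{1 \leq k < j \\ \gcd(j,k) = 1}} \Phi(j,k) + \Phi(1,1).
\]
To match the asserted right-hand side I would reconcile the index sets: in the sum $\sum_{j \geq 1}\sum_{0 \leq k < j,\, \gcd(j,k)=1}$, the value $j = 1$ contributes only the term $k = 0$ (namely $\Phi(1,0)$), while for every $j \geq 2$ the choice $k = 0$ is excluded, since $\gcd(j,0) = j > 1$. Hence the double sum over $1 \leq k < j$, $j \geq 2$ equals $\sum_{j \geq 1}\sum_{0 \leq k < j,\, \gcd(j,k)=1} \Phi(j,k) - \Phi(1,0)$, and substituting this into the displayed expression for $S$ yields
\[
S = -2\Phi(1,0) + \Phi(1,1) + 2 \sum_{j \geq 1} \sum_{\substack{0 \leq k < j \\ \gcd(j,k) = 1}} \Phi(j,k),
\]
which is \eqref{summation conversion formula}. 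The only genuine content lies in the fibre count $N(j,k) = 2$; the corrections $-2\Phi(1,0)$ and $+\Phi(1,1)$ are exactly the bookkeeping for the absent pair $(1,0)$ and the singly counted pair $(1,1)$.
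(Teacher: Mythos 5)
Your proof is correct and follows essentially the same route as the paper: reorganize $S$ by the multiplicity of each coprime pair $(j,k)$, establish that this multiplicity is $2$ for $1 \leq k < j$, $1$ for $(1,1)$, and $0$ for $(1,0)$, and then adjust the index set to match the stated formula. The only difference is that where the paper justifies the crucial fibre count by an example and a citation to \cite{Els11}, you actually prove it via the reversal identity $q_{n-1}^*(\sigma)/q_n^*(\sigma) = [0;\sigma_n,\dotsc,\sigma_1]$ and the two-representation theorem for rationals, which is a welcome tightening rather than a departure.
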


\begin{proof}
We begin by noting that for each $n \in \mathbb{N}$ and $\sigma \in \Sigma_n$, the recurrence relations (Proposition \ref{metric properties}(\ref{metric properties(i)})) imply $q_n^*(\sigma) > q_{n-1}^*(\sigma)$, unless $\sigma = (1) \in \Sigma_1$, in which case $q_1^*(\sigma) = q_0^*(\sigma) = 1$. Moreover, by Proposition \ref{metric properties}(\ref{metric properties(iv)}), we have $\gcd(q_n^*(\sigma), q_{n-1}^*(\sigma)) = 1$ for all $\sigma \in \Sigma_n$.

We now isolate the case $n = 1$ to obtain
\begin{align*}
S 
&= \sum_{n \geq 2} \sum_{\sigma \in \Sigma_n} \Phi \left( q_n^*(\sigma), q_{n-1}^*(\sigma) \right) +  \sum_{\sigma \in \Sigma_1} \Phi \left( q_1^*(\sigma), q_0^*(\sigma) \right) \\
&= \sum_{n \geq 2} \sum_{\sigma \in \Sigma_n} \Phi \left( q_n^*(\sigma), q_{n-1}^*(\sigma) \right) + \sum_{\sigma \in \Sigma_1 \setminus \{ (1) \}} \Phi \left( q_1^*(\sigma), q_0^*(\sigma) \right) + \Phi (1,1).
\end{align*}
In the first sum, since $n \geq 2$, we have $q_{n-1}^*(\sigma) \geq F_n \geq F_2 = 1$ by Proposition \ref{metric properties}(\ref{metric properties(iii)}), so no pair with $q_{n-1}^*(\sigma) = 0$ appears. Similarly, in the second sum, $q_0^*(\sigma) = 1$ for all $\sigma \in \Sigma_1 \setminus \{(1)\}$. Thus, the pair $(1,0)$ does not appear in the total sum $S$.

Now consider
\[
T \coloneqq \sum_{j \geq 1} \sum_{\substack{0 \leq k < j \\ \gcd(j,k)=1}} \Phi (j,k).
\]
Note that the pair $(1,1)$ does not appear in $T$, since for $j=1$, the inner sum only includes $k=0$. As illustrated in Example \ref{double sum counting}, each pair $(j,k)$ with $\gcd(j,k) = 1$ and $1 \leq k < j$ appears exactly twice among the pairs $(q_n^*(\sigma), q_{n-1}^*(\sigma))$, and the pair $(1,1)$ appears exactly once. However, the pair $(1,0)$, although it satisfies $\gcd(1,0)=1$, appears in $T$ but not appear in the original double sum $S$. Since $T$ counts $(1,0)$ once, doubling $T$ to reconstruct $S$ would erroneously count the pair $(1,0)$ twice. We must therefore subtract $2 \Phi(1,0)$ to remove this overcounting. Putting everything together, we conclude that
\[
S = 2T - 2 \Phi(1,0) + \Phi(1,1),
\]
as desired.
\end{proof}

Define $G_{\mathcal{E}} \colon \mathbb{R} \to \mathbb{R}^2$ by $G_{\mathcal{E}} (x) \coloneqq (x, \mathcal{E}(x))$ for $x \in \mathbb{R}$. Then the graph of $\mathcal{E}$ is given by $G_{\mathcal{E}} (\mathbb{R}) = \{ ( x, \mathcal{E}(x) ) : x \in \mathbb{R} \}$.

\begin{lemma} \label{Hausdorff dimension of restriction 2}
We have $\hdim G_{\mathcal{E}} ([0,1)) = 1$.
\end{lemma}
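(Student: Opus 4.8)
The plan is to establish the two inequalities $\hdim G_{\mathcal{E}}([0,1)) \geq 1$ and $\hdim G_{\mathcal{E}}([0,1)) \leq 1$ separately. The lower bound is immediate: the orthogonal projection $\pi \colon \mathbb{R}^2 \to \mathbb{R}$ given by $\pi(x,y) = x$ is $1$-Lipschitz and satisfies $\pi(G_{\mathcal{E}}([0,1))) = [0,1)$. Since Lipschitz maps do not increase Hausdorff dimension, I get $\hdim G_{\mathcal{E}}([0,1)) \geq \hdim[0,1) = 1$. All of the real work is therefore in the upper bound.

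For the upper bound I fix $s > 1$ and aim to show $\mathcal{H}^s(G_{\mathcal{E}}([0,1))) = 0$. For each $n \in \mathbb{N}$, the fundamental intervals $\{ I_\sigma : \sigma \in \Sigma_n \}$ partition $[0,1)$ up to a countable set (those rationals whose CF expansion is shorter than $n$); the graph over this exceptional set is countable and hence negligible for Hausdorff dimension. Over each $I_\sigma$ with $\sigma \in \Sigma_n$, Lemma \ref{sup of difference of E in I sigma} shows that the graph of $\mathcal{E}$ is confined to a rectangle of width $\ell_\sigma \coloneqq \mathcal{L}(I_\sigma)$ and height $(n+1)\ell_\sigma$, which can be covered by at most $n+2$ axis-parallel squares of side $\ell_\sigma$, each of diameter $\sqrt{2}\,\ell_\sigma$. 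Using the length formula \eqref{length of I sigma}, namely $\ell_\sigma = (q_n^*(\sigma)(q_n^*(\sigma)+q_{n-1}^*(\sigma)))^{-1}$, together with the Fibonacci bound $q_n^*(\sigma) \geq F_{n+1}$ from Proposition \ref{metric properties}(\ref{metric properties(iii)}), the level-$n$ cover has mesh at most $\sqrt{2}\,F_{n+1}^{-2} \to 0$, and it yields
\[
\mathcal{H}^s_{\delta_n}(G_{\mathcal{E}}([0,1))) \leq 2^{s/2} \sum_{\sigma \in \Sigma_n} (n+2)\,\ell_\sigma^s \eqqcolon a_n .
\]
It then suffices to prove that $\sum_{n \geq 1} a_n < \infty$, for this forces $a_n \to 0$ and hence $\mathcal{H}^s(G_{\mathcal{E}}([0,1))) = \lim_{n} \mathcal{H}^s_{\delta_n} = 0$.

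To bound the series I first remove the dependence on $n$ from the weight. Since $q_n^*(\sigma) \geq F_{n+1} \geq g^{n-1}$, there is a uniform constant $C$ with $n+2 \leq C\log(q_n^*(\sigma)+2)$, so setting $\Phi(j,k) \coloneqq \log(j+2)\,(j(j+k))^{-s}$ gives $(n+2)\ell_\sigma^s \leq C\,\Phi(q_n^*(\sigma),q_{n-1}^*(\sigma))$ termwise. Applying the summation conversion formula \eqref{summation conversion formula} of Lemma \ref{summation conversion formula lemma} reduces $\sum_{n}\sum_{\sigma \in \Sigma_n}\Phi(q_n^*(\sigma),q_{n-1}^*(\sigma))$ to the finite boundary terms $-2\Phi(1,0)+\Phi(1,1)$ plus twice the coprime double sum $\sum_{j \geq 1}\sum_{0 \leq k < j,\ \gcd(j,k)=1}\Phi(j,k)$. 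Bounding $j+k \geq j$ and noting there are at most $j$ admissible residues $k$, the main sum is dominated by $2\sum_{j \geq 1}\log(j+2)\,j^{1-2s}$, which converges exactly because $2s-1 > 1$. Letting $s \downarrow 1$ then gives $\hdim G_{\mathcal{E}}([0,1)) \leq 1$, and combined with the lower bound completes the proof.

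The conceptual heart of the argument, and the reason the dimension is precisely $1$, is the oscillation estimate $(n+1)\mathcal{L}(I_\sigma)$: the number of squares needed over each $I_\sigma$ grows only linearly in $n$, that is, logarithmically in the denominator $q_n^*(\sigma)$, so the box count contributes only a harmless logarithmic factor against the $j^{-2s}$ decay coming from $\ell_\sigma \asymp (q_n^*)^{-2}$. The main technical obstacle I anticipate is the bookkeeping that turns the family of single-level covering sums into one convergent double series over coprime denominator pairs: one must legitimately replace the $n$-dependent weight $n+2$ by $\log q_n^*(\sigma)$, verify that the two CF expansions of each reduced fraction account for exactly the twofold multiplicity encoded in Lemma \ref{summation conversion formula lemma}, and confirm that the totient-type count $\varphi(j) \leq j$ produces the critical exponent $2s-1$, which exceeds $1$ if and only if $s > 1$.
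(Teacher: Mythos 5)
Your proposal is correct, and its architecture is the same as the paper's: projection for the lower bound, and for the upper bound a level-$n$ cover of $G_{\mathcal{E}}(\mathbb{I})$ by rectangles $I_\sigma \times J_\sigma$ of height $(n+1)\mathcal{L}(I_\sigma)$ (Lemma \ref{sup of difference of E in I sigma}), subdivided into squares and fed into the summation conversion formula \eqref{summation conversion formula}. The two places where you genuinely diverge are both in how convergence is extracted. First, the paper keeps the weight $n+1$ outside the inner sum, proves only $\sum_n a_n < \infty$ for the unweighted $a_n = \sum_{\sigma \in \Sigma_n} \mathcal{L}(I_\sigma)^{1+\varepsilon}$, and then invokes the elementary fact that convergence of $\sum_n a_n$ forces $\liminf_n (n+1)a_n = 0$ (which is why the Hausdorff measure estimate there is phrased with a $\liminf$); you instead absorb the weight into the summand via $n+2 \leq C\log(q_n^*(\sigma)+2)$, which follows from $q_n^*(\sigma) \geq F_{n+1}$, so that the full weighted double series converges and you get an honest limit $a_n \to 0$. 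Second, the paper evaluates the coprime double sum by M\"obius inversion (Propositions \ref{Mobius identity} and \ref{Mobius Riemann identity}), producing the factor $1/\zeta(1+2\varepsilon)$ that cancels against $\zeta(1+2\varepsilon)$; since $\Phi \geq 0$, you simply drop the coprimality constraint and bound the sum by $2\sum_j \log(j+2)\,j^{1-2s}$, which converges for $s>1$. Your route is slightly more economical (no M\"obius machinery, no $\liminf$ subtlety), at the cost of an unspecified constant $C$ and a logarithmic factor; the paper's route yields the cleaner closed-form bound $\sum_n a_n \leq 2^{-(1+\varepsilon)}$ and is the version that generalizes to the $P(x)$ estimate in Subsection \ref{sec:graphP}, where the zeta-function bookkeeping is actually needed to locate the threshold $\varepsilon > 1/2$. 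Two cosmetic points: $n+1$ squares already suffice for a rectangle of height exactly $(n+1)\mathcal{L}(I_\sigma)$, and it is cleaner to fix the exceptional set as all of $[0,1)\cap\mathbb{Q}$ once and for all (as the paper does with $F = G_{\mathcal{E}}(\mathbb{I})$) rather than letting it vary with $n$; neither affects the argument.
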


\begin{proof} 
{\sc Lower bound.}
The inequality $\hdim G_{\mathcal{E}} ([0,1)) \geq 1$ is classical (see, e.g., \cite[Theorem~4.1]{Ahn23}). Consider the projection map onto the first coordinate, $p \colon \mathbb{R}^2 \to \mathbb{R}^2$, defined by $p (x,y) \coloneqq (x,0)$ for each $(x,y) \in \mathbb{R}^2$. Since $\hdim p(F) \leq \min \{ \hdim F, 1 \}$ for any $F \subseteq \mathbb{R}^2$ (see \cite[Equation (6.1)]{Fal14}), we deduce that
\[
1 = \hdim [0,1) = \hdim p(G_{\mathcal{E}} ([0,1))) \leq \hdim G_{\mathcal{E}}([0,1)),
\]
as claimed.

{\sc Upper bound.}
To establish the desired upper bound, we construct an appropriate covering of $G_{\mathcal{E}}([0,1))$. For each $n \in \mathbb{N}$ and $\sigma \in \Sigma_n$, define the closed interval
\[
J_\sigma \coloneqq 
\left[ \inf_{t \in I_\sigma} \mathcal{E}(t), \sup_{t \in I_\sigma} \mathcal{E}(t) \right].
\]
Then $I_\sigma \times \mathcal{E}(I_\sigma) \subseteq I_\sigma \times J_\sigma$. We claim that for each $n \in \mathbb{N}$, we have $\mathbb{I} \subseteq \bigcup_{\sigma \in \Sigma_n} I_\sigma$. Indeed, given any $x \in \mathbb{I}$, we have $\tau \coloneqq f(x) \in \Sigma_\infty$ by Proposition \ref{inverse image of phi}(\ref{inverse image of phi(ii)}), and $\tau^{(n)} \in \Sigma_n$ satisfies $x \in f^{-1}(\Upsilon_{\tau^{(n)}}) = I_{\tau^{(n)}}$. Hence $x \in \bigcup_{\sigma \in \Sigma_n} I_\sigma$, proving the claim. Thus, for each $n$, the collection
\[
\mathcal{J} \coloneqq \{ I_\sigma \times J_\sigma : \sigma \in \Sigma_n \}
\]
forms a covering of $F \coloneqq G_{\mathcal{E}}(\mathbb{I})$, which differs from $G_{\mathcal{E}}([0,1))$ only by a countable set. By Lemma \ref{sup of difference of E in I sigma}, we have $\mathcal{L}(J_\sigma) = (n+1) \mathcal{L}(I_\sigma)$. Each rectangle $I_\sigma \times J_\sigma$ is thus covered by $n+1$ axis-aligned squares of side length $\mathcal{L}(I_\sigma)$ and diagonal $\sqrt{2} \mathcal{L}(I_\sigma)$.

Let $\varepsilon > 0$. Recall from \eqref{length of I sigma} that $\mathcal{L}(I_\sigma) = 1 / [q_n^*(\sigma)(q_n^*(\sigma) + q_{n-1}^*(\sigma))]$. Then the $(1+\varepsilon)$-dimensional Hausdorff measure of $F$ is bounded above as
\begin{align} \label{estimate of Hausdorff measure}
\begin{aligned}
\mathcal{H}^{1+\varepsilon}(F)
&\leq \liminf_{n \to \infty} \sum_{\sigma \in \Sigma_n} (n+1) \left( \sqrt{2} \mathcal{L}(I_\sigma) \right)^{1+\varepsilon} \\
&= (\sqrt{2})^{1+\varepsilon} \liminf_{n \to \infty} (n+1) a_n,
\end{aligned}
\end{align}
where
\[
a_n \coloneqq \sum_{\sigma \in \Sigma_n} \left( \frac{1}{q_n^*(\sigma) (q_n^*(\sigma) + q_{n-1}^*(\sigma))} \right)^{1+\varepsilon}.
\]
By the summation conversion formula \eqref{summation conversion formula},
\begin{align} \label{series of an}
\sum_{n \geq 1} a_n = -2 + \frac{1}{2^{1+\varepsilon}} + 2 \sum_{j \geq 1} \sum_{\substack{0 \leq k < j \\ \gcd(j,k)=1}} \frac{1}{j^{1+\varepsilon}(j+k)^{1+\varepsilon}}.
\end{align}

To estimate the double sum, we apply M\"obius inversion (Proposition \ref{Mobius identity}). For each $j \in \mathbb{N}$,
\begin{align*}
\sum_{\substack{0 \leq k < j \\ \gcd(j,k)=1}} \frac{1}{j^{1+\varepsilon}(j+k)^{1+\varepsilon}}
&= \sum_{0 \leq k < j} \frac{1}{j^{1+\varepsilon}(j+k)^{1+\varepsilon}} \sum_{\substack{d \mid \gcd(j,k)}} \mu(d) \\
&= \sum_{\substack{d \mid j}} \mu(d) \sum_{\substack{0 \leq k < j \\ d \mid k}} \frac{1}{j^{1+\varepsilon}(j+k)^{1+\varepsilon}}.
\end{align*}
Rewriting the double sum as a sum over common divisors $d$, we obtain
\begin{align*}
\sum_{j \geq 1} \sum_{\substack{0 \leq k < j \\ \gcd(j,k)=1}} \frac{1}{j^{1+\varepsilon} (j+k)^{1+\varepsilon}}
&= \sum_{j \geq 1} \sum_{\substack{d \mid j}} \mu (d) \sum_{\substack{0 \leq k < j \\ d \mid k}} \frac{1}{j^{1+\varepsilon}(j+k)^{1+\varepsilon}} \\
&= \sum_{d \geq 1} \mu(d) \sum_{\substack{j \geq 1 \\ d \mid j}} \sum_{\substack{0 \leq k < j \\ d \mid k}} \frac{1}{j^{1+\varepsilon} (j+k)^{1+\varepsilon}}.
\end{align*}
Set $j = dj'$ and $k = dk'$. Then $j' \in \mathbb{N}$, and $k' \in \left\{ 0, \dotsc, \left\lfloor \frac{j-1}{d} \right\rfloor = \left\lfloor j' - \frac{1}{d} \right\rfloor = j'-1 \right\}$. Thus, we can write
\begin{align*}
\sum_{j \geq 1} \sum_{\substack{0 \leq k < j \\ \gcd(j,k)=1}} \frac{1}{j^{1+\varepsilon} (j+k)^{1+\varepsilon}}
&= \sum_{d \geq 1} \mu (d) \sum_{j' \geq 1} \sum_{0 \leq k' < j'} \frac{1}{(j'd)^{1+\varepsilon} (j'd+k'd)^{1+\varepsilon}} \\
&= \sum_{d \geq 1} \frac{\mu (d)}{d^{2+2\varepsilon}} \sum_{j \geq 1} \sum_{0 \leq k < j} \frac{1}{j^{1+\varepsilon} (j+k)^{1+\varepsilon}}, \\
&= \frac{1}{\zeta (2+2\varepsilon)} \sum_{j \geq 1} \sum_{0 \leq k < j} \frac{1}{j^{1+\varepsilon} (j+k)^{1+\varepsilon}},
\end{align*}
where the last equality holds by Proposition \ref{Mobius Riemann identity}. We now estimate the remaining double sum as follows.
\begin{align*}
\sum_{j \geq 1} \sum_{0 \leq k < j} \frac{1}{j^{1+\varepsilon} (j+k)^{1+\varepsilon}}
&= \sum_{j \geq 1} \frac{1}{j^{1+\varepsilon}} \sum_{j \leq m < 2j} \frac{1}{m^{1+\varepsilon}} \\
&\leq \sum_{j \geq 1} \frac{1}{j^{1+\varepsilon}} \cdot \left( j \cdot \frac{1}{j^{1+\varepsilon}} \right) 
= \sum_{j \geq 1} \frac{1}{j^{1+2\varepsilon}}
= \zeta (1 + 2 \varepsilon)
\end{align*}
Putting everything together and substituting back into \eqref{series of an}, we obtain
\[
\sum_{n \geq 1} a_n \leq -2 + \frac{1}{2^{1+\varepsilon}} + 2 \cdot \frac{1}{\zeta (2+2 \varepsilon)} \cdot \zeta (1 + 2 \varepsilon) < \infty.
\]
Since $\sum_{n \geq 1}a_n$ converges, we conclude from a standard result in real analysis that $\liminf_{n \to \infty} (n+1) a_n = 0$. Indeed, suppose for contradiction that $\delta \coloneqq \liminf_{n \to \infty} (n+1) a_n > 0$. Then there exists $N \in \mathbb{N}$ such that for all $n \geq N$, we have $(n+1) a_n > \frac{\delta}{2}$, so that $a_n > \frac{\delta}{2(n+1)}$. This contradicts the convergence of the series $\sum_{n \geq 1} a_n$.

Substituting $\liminf_{n \to \infty} (n+1)a_n =0 $ into \eqref{estimate of Hausdorff measure}, we conclude $\mathcal{H}^{1+\varepsilon}(F) = 0$. Then $\hdim F \leq 1+\varepsilon$. Since $\varepsilon>0$ was arbitrary, it follows that $\hdim F \leq 1$. Since $F = G_{\mathcal{E}} (\mathbb{I})$ differs from $G_{\mathcal{E}}([0,1))$ only by the countable set $G_{\mathcal{E}}([0,1) \cap \mathbb{Q})$, and since any countable set has Hausdorff dimension $0$, the countable stability property of the Hausdorff dimension (see \cite[pp.~48--49]{Fal14}) implies that $\hdim G_{\mathcal{E}} ([0,1)) = \hdim F \leq 1$. This completes the proof of the lemma.
\end{proof}

\begin{remarks}
We make two remarks concerning the above proof of Lemma \ref{Hausdorff dimension of restriction 2}.
\begin{enumerate}[label = \upshape(\arabic*), ref = \arabic*, leftmargin=*, widest=2]
\item
In the estimate
\[
\sum_{j \leq m < 2j} \frac{1}{m^{1+\varepsilon}} \leq \sum_{j \leq m < 2j} \frac{1}{j^{1+\varepsilon}} = \frac{j}{j^{1+\varepsilon}} = \frac{1}{j^{\varepsilon}}
\]
a sharper bound can be obtained via integration as follows.
\[
\sum_{j \leq m < 2j} \frac{1}{m^{1+\varepsilon}} \leq \int_{j-1}^{2j-1} \frac{1}{x^{1+\varepsilon}} \, dx = \frac{1}{\varepsilon} \left[ \frac{1}{(j-1)^{\varepsilon}} - \frac{1}{(2j-1)^{\varepsilon}} \right].
\]
However, the coarser estimate used in the proof suffices for the desired result.

\item
We could have obtained $\mathcal{H}^{1+\varepsilon}(F) = 0$ for any $\varepsilon > 0$ by using the Fibonacci bounds for the denominators $q_n^*(\sigma)$ given in Proposition \ref{metric properties}(\ref{metric properties(iii)}).

Let $\varepsilon > 0$. From \eqref{estimate of Hausdorff measure}, we have
\[
\frac{\mathcal{H}^{1+\varepsilon}(F)}{(\sqrt{2})^{1+\varepsilon}}
\leq \liminf_{n \to \infty} (n+1) \sum_{\sigma \in \Sigma_n} \left( \frac{1}{q_n^*(\sigma)(q_n^*(\sigma) + q_{n-1}^*(\sigma))} \right)^{1+\varepsilon}.
\]
By Proposition \ref{metric properties}(\ref{metric properties(iii)}), for all $\sigma \in \Sigma_n$, we have
\[
q_n^*(\sigma) \geq F_{n+1} \quad \text{and} \quad q_n^*(\sigma) + q_{n-1}^*(\sigma) \geq F_{n+1} + F_n = F_{n+2},
\]
so that
\[
\frac{1}{q_n^*(\sigma)(q_n^*(\sigma) + q_{n-1}^*(\sigma))} \leq \frac{1}{F_{n+1}F_{n+2}}.
\]
Hence,
\begin{align*}
\frac{\mathcal{H}^{1+\varepsilon}(F)}{(\sqrt{2})^{1+\varepsilon}}
&\leq \liminf_{n \to \infty} (n+1) \sum_{\sigma \in \Sigma_n} \left[ \mathcal{L}(I_\sigma) \cdot \left( \frac{1}{F_{n+1}F_{n+2}} \right)^{\varepsilon} \right] \\
&= \liminf_{n \to \infty} \left[ (n+1) \left( \frac{1}{F_{n+1}F_{n+2}} \right)^{\varepsilon} \sum_{\sigma \in \Sigma_n} \mathcal{L}(I_\sigma) \right] \\
&= \liminf_{n \to \infty} \left[ (n+1) \left( \frac{1}{F_{n+1}F_{n+2}} \right)^\varepsilon \right],
\end{align*}
where the last equality follows since
\[
\sum_{\sigma \in \Sigma_n} \mathcal{L}(I_\sigma) = \mathcal{L} \left( \bigcup_{\sigma \in \Sigma_n} I_\sigma \right) \geq \mathcal{L}(\mathbb{I}) = 1,
\]
and the left-hand side is at most $1$ by disjointness of the intervals.

Using the Binet formula, we find
\[
F_{n+1}F_{n+2} = \frac{1}{5} \left( g^{2n+3} + (-g)^{-(2n+3)} - (-1)^{n+1} \right) \geq \frac{g^{2n+3}}{10},
\]
so that
\[
\liminf_{n \to \infty} \left[ (n+1) \left( \frac{1}{F_{n+1}F_{n+2}} \right)^{\varepsilon} \right]
\leq \liminf_{n \to \infty} \left[ (n+1) \left( \frac{10}{g^{2n+3}} \right)^{\varepsilon} \right] = 0.
\]
Thus, $\mathcal{H}^{1+\varepsilon}(F) = 0$.
\end{enumerate}
\end{remarks}

\begin{proof} [Proof of Theorem \ref{Hausdorff dimension of graph of E}]
We decompose $\mathbb{R}$ as $\mathbb{R} = \bigcup_{n \in \mathbb{Z}} [n, n+1)$, so that $G_{\mathcal{E}} (\mathbb{R}) = \bigcup_{n \in \mathbb{Z}} G_{\mathcal{E}} ([n,n+1))$. Since $\mathcal{E}$ is periodic with period $1$ by Proposition \ref{E is periodic}, each set $G_{\mathcal{E}} ([n,n+1))$ is a horizontal translation of $G_{\mathcal{E}} ([0,1))$. In particular, we have $\hdim G_{\mathcal{E}}([n, n+1)) = \hdim G_{\mathcal{E}}([0,1))$ for all $n \in \mathbb{Z}$. Thus, by the countable stability of the Hausdorff dimension (see \cite[pp.~48--49]{Fal14}), it follows that
\[
\hdim G_{\mathcal{E}} (\mathbb{R}) = \sup_{n \in \mathbb{Z}} \hdim G_{\mathcal{E}} ([n,n+1)) = \hdim G_{\mathcal{E}} ([0,1)) = 1,
\]
where the last equality follows from Lemma \ref{Hausdorff dimension of restriction 2}.
\end{proof}

\subsection{Hausdorff dimension of the graph of $P(x)$} \label{sec:graphP}

In this subsection, we present results concerning the relative error-sum function $P \colon \mathbb{R} \to \mathbb{R}$. All results stated here can be derived in essentially the same manner as those in Subsection \ref{sec:graphE}, by making straightforward modifications to the arguments used for the unweighted error-sum functions $\mathcal{E}(x)$ and $\mathcal{E}^*(\sigma)$. For this reason, we omit the proofs of all lemmas in this subsection---namely, Lemmas \ref{P is periodic}--\ref{sup of difference of P in I sigma}, whose respective counterparts for the unweighted error-sum functions are Proposition \ref{E is periodic} and Lemmas \ref{commutative diagram for E}--\ref{sup of difference of E in I sigma}.

\begin{lemma} [{\cite[Proposition~1.2]{RP00}}] \label{P is periodic}
For any $x \in \mathbb{R}$, we have $P(x+1) = P(x)$.
\end{lemma}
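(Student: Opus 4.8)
The plan is to follow the template of Proposition \ref{E is periodic}, reducing the periodicity of $P$ to the invariance $P(x) = P(\{x\})$ together with the trivial identity $\{x+1\} = \{x\}$. The one structural difference from the unweighted case is that each summand $q_n(x)x - p_n(x)$ of $P$ depends on $x$ not only through the convergents but also through the explicit linear weight $q_n(x)\,x$; the whole argument hinges on the fact that this extra weight is exactly cancelled by the shift in the numerators when the integer part of $x$ changes.

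First I would record how the convergents transform under addition of an integer. Since the CF digits $d_1(x), d_2(x), \dotsc$ depend only on $\{x\}$ and only $d_0(x) = \lfloor x \rfloor$ is altered, the denominator recurrence gives $q_n(x) = q_n(\{x\})$ for every $n \geq 0$. For the numerators I would prove, by induction on $n$ using the three-term recurrence $p_n = d_n p_{n-1} + p_{n-2}$ (cf. Proposition \ref{metric properties}(\ref{metric properties(i)})), the identity
\[
p_n(x) = p_n(\{x\}) + \lfloor x \rfloor\, q_n(\{x\}) \qquad (n \geq 0),
\]
whose base case is $p_0(x) = \lfloor x \rfloor = p_0(\{x\}) + \lfloor x \rfloor\, q_0(\{x\})$ and whose inductive step uses $q_n(x) = q_n(\{x\})$.

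With these two relations in hand the computation is immediate and term-by-term. The $n = 0$ summand is $q_0(x)x - p_0(x) = x - \lfloor x \rfloor = \{x\}$, which equals $q_0(\{x\})\{x\} - p_0(\{x\})$. For $n \geq 1$ the linear weight produces the desired cancellation:
\[
q_n(x)\,x - p_n(x) = q_n(\{x\})\,x - p_n(\{x\}) - \lfloor x \rfloor\, q_n(\{x\}) = q_n(\{x\})\{x\} - p_n(\{x\}).
\]
Summing over $n \geq 0$ yields $P(x) = P(\{x\})$ (the series converge by the bounds of Proposition \ref{metric properties}(\ref{metric properties(v)}), since the summands coincide with those of $P(\{x\})$), and then $P(x+1) = P(\{x+1\}) = P(\{x\}) = P(x)$.

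The main obstacle --- really the only non-bookkeeping point --- is establishing the numerator identity above, and in particular verifying that the correction term is precisely $\lfloor x \rfloor\, q_n(\{x\})$ rather than some $n$-dependent expression. Once this exact linear shift is confirmed, its cancellation against the weight $q_n(x)\,x$ is forced, and the remainder of the proof is a direct transcription of the argument for Proposition \ref{E is periodic}. Alternatively, one may bypass the reduction to $\{x\}$ altogether: specializing to the shift $x \mapsto x+1$ gives $q_n(x+1) = q_n(x)$ and $p_n(x+1) = p_n(x) + q_n(x)$, whence $q_n(x+1)(x+1) - p_n(x+1) = q_n(x)x - p_n(x)$ for every $n$, and periodicity follows at once.
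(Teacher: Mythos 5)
Your proof is correct and is exactly the ``straightforward modification'' of Proposition \ref{E is periodic} that the paper invokes when it omits the proof of Lemma \ref{P is periodic}: reduce to $P(x)=P(\{x\})$ via the transformation law for the convergents under an integer shift. In fact your numerator identity $p_n(x) = p_n(\{x\}) + \lfloor x\rfloor\, q_n(\{x\})$ is the correct one (the paper's own proof of Proposition \ref{E is periodic} misstates it as $p_n(x)=p_n(\{x\})$ for $n\ge 1$), and either of your two routes---the reduction to $\{x\}$ or the direct term-by-term cancellation $q_n(x+1)(x+1)-p_n(x+1)=q_n(x)x-p_n(x)$---is valid.
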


We define the {\em relative error-sum function of CF sequences} $P^* \colon \Sigma \to \mathbb{R}$ by
\[
P^*(\sigma) \coloneqq \sum_{n \geq 0} (\varphi (\sigma) - \varphi_n (\sigma)) q_n^* (\sigma)
\]
for each $\sigma \in \Sigma$. By Proposition \ref{metric properties}(\ref{metric properties(v)}), the function $P^*$ is well defined, and the series on the right-hand side converges absolutely and uniformly on $\Sigma$.

\begin{lemma} \label{commutative diagram for P}
We have $P = P^* \circ f$ on $[0,1)$; that is, the following diagram commutes:
\begin{center}
\begin{tikzcd}[column sep=small]
[0,1) \arrow{rr}{P} \arrow[swap, shift right=.75ex]{dr}{f}& &\mathbb{R} \\
& \Sigma \arrow[swap]{ur}{P^*}  
\end{tikzcd}    
\end{center}
\end{lemma}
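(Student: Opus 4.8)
The plan is to follow the proof of Lemma \ref{commutative diagram for E} almost verbatim, inserting one extra bookkeeping step to handle the weighting factor $q_n^*(\sigma)$. First I would fix $x \in [0,1)$ and set $\sigma \coloneqq f(x) = (d_1(x), d_2(x), \dotsc) \in \Sigma$. Exactly as in the unweighted case, the defining properties of $\varphi$ and $\varphi_n$ give $\varphi(\sigma) = [d_1(x), d_2(x), \dotsc] = x$ and $\varphi_n(\sigma) = [d_1(x), \dotsc, d_n(x), \infty, \infty, \dotsc] = p_n(x)/q_n(x)$ for each $n \in \mathbb{N}$.

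The one genuinely new point, absent from the proof of Lemma \ref{commutative diagram for E}, is to verify that the convergent denominators themselves agree, namely $q_n^*(\sigma) = q_n(x)$ for all $n \geq 0$. This follows from the uniqueness of the reduced representation of a rational: both $p_n^*(\sigma)/q_n^*(\sigma)$ and $p_n(x)/q_n(x)$ express the same rational $\varphi_n(\sigma)$ in lowest terms with positive denominator, so $p_n^*(\sigma) = p_n(x)$ and $q_n^*(\sigma) = q_n(x)$. For $n = 0$ this is immediate from the initial conventions $q_0^*(\sigma) = 1 = q_0(x)$.

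With this identification in hand, the conclusion reduces to a single substitution into the defining series, which converges absolutely and uniformly by Proposition \ref{metric properties}(\ref{metric properties(v)}) (as noted just after the definition of $P^*$), so termwise manipulation is justified:
\[
(P^* \circ f)(x) = P^*(\sigma) = \sum_{n \geq 0} (\varphi(\sigma) - \varphi_n(\sigma)) q_n^*(\sigma) = \sum_{n \geq 0} \left( x - \frac{p_n(x)}{q_n(x)} \right) q_n(x) = \sum_{n \geq 0} (q_n(x) x - p_n(x)) = P(x).
\]
I do not anticipate any real obstacle here; the argument is a routine transcription of the unweighted case, and the only point requiring genuine care is the denominator matching $q_n^*(\sigma) = q_n(x)$ described above, which legitimizes pulling the factor $q_n(x)$ inside the telescoped error terms.
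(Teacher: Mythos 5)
Your proposal is correct and is precisely the ``straightforward modification'' of the proof of Lemma \ref{commutative diagram for E} that the paper itself invokes when it omits the proofs in Subsection \ref{sec:graphP}. The one extra step you single out---matching $q_n^*(\sigma) = q_n(x)$ via the uniqueness of the reduced representation with positive denominator---is exactly the bookkeeping the weighted case requires, so nothing further is needed.
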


\begin{lemma} \label{P star is continuous}
The map $P^* \colon (\Sigma, \mathcal{T}_\Sigma) \to \mathbb{R}$ is continuous.
\end{lemma}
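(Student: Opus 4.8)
The plan is to follow the three-step template of the proof of Lemma~\ref{E star is continuous}: (i) establish uniform convergence of the defining series, (ii) prove that each summand is continuous on $(\Sigma, \mathcal{T}_\Sigma)$, and (iii) invoke the fact that a uniform limit of continuous functions is continuous. For step (i) the uniform bound is immediate from the last inequality in Proposition~\ref{metric properties}(\ref{metric properties(v)}), which gives $|(\varphi(\sigma) - \varphi_n(\sigma)) q_n^*(\sigma)| \le 1/F_{n+1} \le 1/g^n$ for every $\sigma \in \Sigma$; since $\sum_{n \ge 0} g^{-n} < \infty$, the Weierstrass $M$-test yields absolute and uniform convergence on $\Sigma$.

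The genuine difficulty, and the point where the argument departs from the $\mathcal{E}^*$ case, lies in step (ii). Writing $h_n(\sigma) \coloneqq (\varphi(\sigma) - \varphi_n(\sigma)) q_n^*(\sigma)$, the new feature is the weight $q_n^*(\sigma)$. Unlike $\varphi$ and $\varphi_n$, the map $\sigma \mapsto q_n^*(\sigma)$ is \emph{not} continuous on $(\Sigma, \mathcal{T}_\Sigma)$: at any $\sigma \in \Sigma_m$ with $m < n$ it is in fact locally unbounded, since every neighborhood of such $\sigma$ contains sequences whose $(m+1)$st digit is an arbitrarily large finite integer, driving $q_n^*$ to infinity through the recurrence in Proposition~\ref{metric properties}(\ref{metric properties(i)}). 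Thus I cannot simply declare $h_n$ continuous as a product of continuous functions; the decay of the error $\varphi - \varphi_n$ must be shown to offset the blow-up of $q_n^*$.

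I would handle step (ii) by splitting $\Sigma$ according to where $q_n^*$ is locally constant. On $\Sigma_\infty$ and on $\Sigma_m$ with $m \ge n$, the clopen cylinder $\Upsilon_{\sigma^{(n)}}$ (Proposition~\ref{main results of Ahn25}(\ref{cylinder set is compact})) is a neighborhood of $\sigma$ on which $q_n^*$ is constant, so $h_n$ is continuous there as the product of the continuous function $\varphi - \varphi_n$ (Proposition~\ref{main results of Ahn25}(\ref{phi is continuous})) with a locally constant one. At the remaining points $\sigma \in \Sigma_m$ with $m < n$ one has $\varphi_n(\sigma) = \varphi(\sigma)$, hence $h_n(\sigma) = 0$, and continuity reduces to showing $h_n(\tau) \to 0$ as $\tau \to \sigma$. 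Here I would use the sharper estimate $|h_n(\tau)| \le 1/q_{n+1}^*(\tau)$, again from Proposition~\ref{metric properties}(\ref{metric properties(v)}), together with a compactness argument: if $h_n(\tau) \not\to 0$ along some sequence converging to $\sigma$, then $q_{n+1}^*(\tau)$ stays bounded, which forces the first $n+1$ digits of $\tau$ to be bounded (and finite, since $h_n(\tau) \ne 0$ rules out early termination); passing to a subsequence with constant first $n+1$ digits and using that cylinders are closed would force $\sigma$ itself to have $n+1$ finite digits, contradicting $m < n$.

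With each $h_n$ continuous, step (iii) is routine: the uniformly convergent series $P^* = \sum_{n \ge 0} h_n$ is a uniform limit of continuous partial sums and is therefore continuous. I expect the main obstacle to be exactly the continuity of the individual summand $h_n$ at the early-terminating sequences $\sigma \in \Sigma_m$ with $m < n$, where the discontinuity of $q_n^*$ is tamed only by the quantitative decay of the convergent error; everything else mirrors the proof of Lemma~\ref{E star is continuous}.
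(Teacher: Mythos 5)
Your proof is correct, and it follows the same three-step template (uniform convergence, termwise continuity, uniform limit of continuous functions) that the paper intends: the paper in fact omits the proof of this lemma entirely, referring the reader back to Lemma~\ref{E star is continuous} with ``straightforward modifications.'' The value of your write-up is that you correctly identify the one place where the modification is \emph{not} straightforward: the summand $h_n(\sigma) = (\varphi(\sigma)-\varphi_n(\sigma))\,q_n^*(\sigma)$ is no longer a product of continuous functions, since $q_n^*$ blows up near any $\sigma \in \Sigma_m$ with $m<n$ (as witnessed by $(\sigma_1,\dotsc,\sigma_m,k+1) \to \sigma$ with $q_n^* \to \infty$). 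Your resolution is sound: on $\Sigma_\infty$ and on $\Sigma_m$ with $m \ge n$ the clopen cylinder $\Upsilon_{\sigma^{(n)}}$ makes $q_n^*$ locally constant, and at an early-terminating $\sigma$ one has $h_n(\sigma)=0$ while the bound $|h_n(\tau)| \le 1/q_{n+1}^*(\tau)$ from Proposition~\ref{metric properties}(\ref{metric properties(v)}) plus the pigeonhole/closed-cylinder argument forces $h_n(\tau)\to 0$; since $(\Sigma,\mathcal{T}_\Sigma)$ is metrizable, sequential continuity suffices. So your argument supplies a genuine missing detail rather than diverging from the paper's route; the only cosmetic caveat is that the quantitative inequality in Proposition~\ref{metric properties}(\ref{metric properties(v)}) is stated for $k \le n-2$ (with the exact formula at $k=n-1$ and $h_k=0$ for $k \ge n$ when $\tau \in \Sigma_n$), but the bound $|h_k(\tau)|\le 1/q_{k+1}^*(\tau)$ you use holds in all these cases, so nothing breaks.
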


\begin{lemma} [{\cite[Lemma~1.1]{RP00}}] \label{P star value difference of sigma and sigma prime}
Let $\sigma \coloneqq (\sigma_k)_{k=1}^n \in \Sigma_n \cap \Sigma_{\realizable}$ for some $n \in \mathbb{N}$, and define $\sigma' \coloneqq (\sigma_1, \dotsc, \sigma_{n-1}, \sigma_n-1, 1) \in \Sigma_{n+1} \setminus \Sigma_{\realizable}$. Then
\[
P^*(\sigma') = P^*(\sigma) + \frac{(-1)^n}{q_n^*(\sigma)}.
\]
\end{lemma}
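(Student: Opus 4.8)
The plan is to mirror the argument of Lemma~\ref{E star value difference of sigma and sigma prime} almost verbatim, the only new feature being the bookkeeping of the convergent denominators $q_k^*$ that weight each summand of $P^*$. First I would split the defining series for $P^*(\sigma')$ at the index $n$, writing $P^*(\sigma') = \sum_{0 \leq k \leq n-1} (\varphi(\sigma') - \varphi_k(\sigma')) q_k^*(\sigma') + (\varphi(\sigma') - \varphi_n(\sigma')) q_n^*(\sigma') + \sum_{k \geq n+1} (\varphi(\sigma') - \varphi_k(\sigma')) q_k^*(\sigma')$, so that the increment $P^*(\sigma') - P^*(\sigma)$ can be isolated term by term.

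Then I would record three facts. Since $\sigma' \in \Sigma_{n+1}$, every convergent from index $n+1$ onward stabilizes at $\varphi(\sigma')$, so the tail sum over $k \geq n+1$ vanishes summand by summand. Since $\sigma$ and $\sigma'$ agree on their first $n-1$ entries, Proposition~\ref{metric properties}(\ref{metric properties(ii)}) gives both $\varphi_k(\sigma') = \varphi_k(\sigma)$ and $q_k^*(\sigma') = q_k^*(\sigma)$ for $0 \leq k \leq n-1$; and because $\sigma$ and $\sigma'$ are the two CF representations of a single rational (Proposition~\ref{inverse image of phi}), we have $\varphi(\sigma') = \varphi(\sigma) = \varphi_n(\sigma)$. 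Consequently the head sum over $0 \leq k \leq n-1$ coincides with the corresponding partial sum of $P^*(\sigma)$. Since $\sigma \in \Sigma_n$ forces the tail $\sum_{k \geq n}(\varphi(\sigma) - \varphi_k(\sigma)) q_k^*(\sigma)$ of $P^*(\sigma)$ to vanish as well, these head sums are in fact both equal to the full value $P^*(\sigma)$, leaving only the single surviving middle term.

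This reduces the claim to the identity $P^*(\sigma') - P^*(\sigma) = (\varphi_n(\sigma) - \varphi_n(\sigma')) q_n^*(\sigma')$. Here both ingredients are already available: Proposition~\ref{metric properties}(\ref{metric properties(ii)}) gives $q_n^*(\sigma') = q_n^*(\sigma) - q_{n-1}^*(\sigma)$, and the computation carried out inside the proof of Lemma~\ref{E star value difference of sigma and sigma prime} (via the determinant identity of Proposition~\ref{metric properties}(\ref{metric properties(iv)})) gives $\varphi_n(\sigma) - \varphi_n(\sigma') = (-1)^n / [q_n^*(\sigma)(q_n^*(\sigma) - q_{n-1}^*(\sigma))]$. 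Multiplying these two quantities, the factor $q_n^*(\sigma) - q_{n-1}^*(\sigma)$ cancels cleanly and leaves $(-1)^n / q_n^*(\sigma)$, as required.

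There is essentially no hard step here: the result is a weighted reprise of Lemma~\ref{E star value difference of sigma and sigma prime}, and the one point meriting care is verifying that inserting the weights $q_k^*(\sigma')$ disturbs neither the vanishing of the tail nor the agreement of the head sums---which it does not, precisely because the weights themselves stabilize for $k \geq n+1$ and agree index-by-index with those of $\sigma$ for $0 \leq k \leq n-1$. The pleasant cancellation of $q_n^*(\sigma') = q_n^*(\sigma) - q_{n-1}^*(\sigma)$ against the denominator produced by the convergent difference is exactly what converts the $\mathcal{E}^*$-increment into the simpler $P^*$-increment.
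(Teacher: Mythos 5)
Your proposal is correct and is precisely the argument the paper intends: the paper omits the proof of this lemma, stating that it follows by straightforward modification of Lemma~\ref{E star value difference of sigma and sigma prime}, and your weighted reprise (head sums agree, tail vanishes, and the surviving middle term $(\varphi_n(\sigma)-\varphi_n(\sigma'))\,q_n^*(\sigma')$ simplifies via the cancellation of $q_n^*(\sigma)-q_{n-1}^*(\sigma)$) is exactly that modification.
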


\begin{lemma} \label{bounds for P star in Upsilon sigma}
Let $\sigma \in \Sigma_n$ for some $n \in \mathbb{N}$. Then the following hold.
\begin{enumerate}[label = \upshape(\roman*), ref = \roman*, leftmargin=*, widest=ii]
\item
If $n$ is odd, then
\[
\max_{\tau \in \Upsilon_\sigma} P^*(\tau) = P^*(\sigma)
\quad \text{and} \quad
\min_{\tau \in \Upsilon_\sigma} P^*(\tau) = P^*(\sigma) - \mathcal{L} (I_\sigma) \sum_{0 \leq k \leq n} q_k^*(\sigma).
\]

\item
If $n$ is even, then
\[
\max_{\tau \in \Upsilon_\sigma} P^*(\tau) = P^*(\sigma) + \mathcal{L} (I_\sigma) \sum_{0 \leq k \leq n} q_k^*(\sigma)
\quad \text{and} \quad
\min_{\tau \in \Upsilon_\sigma} P^*(\tau) = P^*(\sigma).
\]
\end{enumerate}
\end{lemma}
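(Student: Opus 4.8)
The plan is to follow the proof of Lemma~\ref{bounds for E star in Upsilon sigma} essentially line by line, replacing the unweighted differences $\varphi(\tau)-\varphi_k(\tau)$ by the weighted quantities
\[
w_k \coloneqq (\varphi(\tau)-\varphi_k(\tau))\,q_k^*(\tau), \qquad \text{so that} \qquad P^*(\tau)=\sum_{k\ge 0} w_k .
\]
As in that lemma it suffices to treat $n$ odd, the even case being entirely symmetric. Throughout I fix $\tau\in\Upsilon_\sigma$, so that $\tau^{(n)}=\sigma$ and hence $\varphi_k(\tau)=\varphi_k(\sigma)$ and $q_k^*(\tau)=q_k^*(\sigma)$ for all $0\le k\le n$.

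The one genuinely new ingredient is the weighted analogue of Lemma~\ref{sequence lemma}: the sequence $((-1)^k w_k)_k$ is non-negative and monotonically decreasing to $0$. Non-negativity and the limit are immediate from Proposition~\ref{metric properties}(\ref{metric properties(v)}), and monotonicity follows by repeating the estimate in the proof of Lemma~\ref{sequence lemma}. Indeed, multiplying the bounds of Proposition~\ref{metric properties}(\ref{metric properties(v)}) by $q_k^*(\tau)$ gives $|w_{k+1}|<1/q_{k+2}^*(\tau)$ and $|w_k|>1/(q_{k+1}^*(\tau)+q_k^*(\tau))$, while the recurrence $q_{k+2}^*(\tau)=\tau_{k+2}q_{k+1}^*(\tau)+q_k^*(\tau)\ge q_{k+1}^*(\tau)+q_k^*(\tau)$ of Proposition~\ref{metric properties}(\ref{metric properties(i)}) forces $|w_{k+1}|<|w_k|$. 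This is exactly the classical statement that the best-approximation errors $|q_k^*(\tau)\varphi(\tau)-p_k^*(\tau)|$ strictly decrease.

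With this in hand the two bounds go through as before. For the upper bound, since $n$ is odd the tail $\sum_{k\ge n}w_k$ opens with the non-positive term $w_n$, so pairing consecutive terms gives $\sum_{k\ge n}w_k\le 0$ and hence $P^*(\tau)\le\sum_{0\le k<n}w_k$; using $\varphi(\tau)\le\varphi_n(\tau)=\varphi(\sigma)$ and $q_k^*(\sigma)\ge 0$ to bound each summand by $(\varphi(\sigma)-\varphi_k(\sigma))q_k^*(\sigma)$, then appending the vanishing terms with $k\ge n$, recovers $P^*(\sigma)$, with equality at $\tau=\sigma$. For the lower bound I split off the tail at $k=n$, expand $\varphi(\tau)-\varphi_k(\sigma)=(\varphi(\sigma)-\varphi_k(\sigma))+(\varphi(\tau)-\varphi(\sigma))$ in the finite part, and use $q_k^*(\tau)=q_k^*(\sigma)$ for $k\le n$ to obtain
\[
\sum_{0\le k\le n}w_k = P^*(\sigma)+(\varphi(\tau)-\varphi(\sigma))\sum_{0\le k\le n}q_k^*(\sigma).
\]
Here the factor $n+1$ of Lemma~\ref{bounds for E star in Upsilon sigma} is replaced by $\sum_{0\le k\le n}q_k^*(\sigma)$, which is the structural reason for the form of the claim. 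Since $n+1$ is even the remaining tail is non-negative, and since $\varphi(\sigma)$ is the right endpoint of $I_\sigma$ by Proposition~\ref{I sigma} we have $\varphi(\tau)-\varphi(\sigma)\ge-\mathcal{L}(I_\sigma)$, giving $P^*(\tau)\ge P^*(\sigma)-\mathcal{L}(I_\sigma)\sum_{0\le k\le n}q_k^*(\sigma)$, with equality at $\tau=(\sigma_1,\dots,\sigma_n,1)$, for which $\varphi(\tau)=\varphi(\widehat{\sigma})$ is the left endpoint and the tail vanishes.

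The main obstacle is the weighted monotonicity established in the second paragraph, since the paper records Lemma~\ref{sequence lemma} only for the unweighted differences; as shown, however, it costs only the extra inequality $q_{k+2}^*\ge q_{k+1}^*+q_k^*$, so in the write-up I would either insert a one-line weighted restatement of Lemma~\ref{sequence lemma} or inline this estimate. All other steps are the verbatim bookkeeping of Lemma~\ref{bounds for E star in Upsilon sigma} with the weights $q_k^*(\sigma)$ carried along, and the case $n$ even follows by the symmetric argument, using that $\varphi(\sigma)$ is then the left endpoint of $I_\sigma$.
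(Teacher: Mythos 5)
Your proposal is correct and follows exactly the route the paper intends: the paper omits this proof as a ``straightforward modification'' of Lemma~\ref{bounds for E star in Upsilon sigma}, and your adaptation carries the weights $q_k^*(\sigma)$ through each step faithfully. You also correctly identify and supply the one ingredient the paper leaves implicit, namely the weighted analogue of Lemma~\ref{sequence lemma} (strict decrease of $|q_k^*(\tau)\varphi(\tau)-p_k^*(\tau)|$), and your derivation of it from Proposition~\ref{metric properties}(\ref{metric properties(i)}) and (\ref{metric properties(v)}) is sound.
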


\begin{lemma} \label{sup of difference of P in I sigma}
Let $\sigma \in \Sigma_n$ for some $n \in \mathbb{N}$. Then
\[
\sup_{t,u \in I_\sigma} |P(t) - P(u)| = \mathcal{L} (I_\sigma) \sum_{0 \leq k \leq n} q_k^*(\sigma).
\]
\end{lemma}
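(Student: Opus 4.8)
The plan is to transcribe, almost verbatim, the proof of Lemma \ref{sup of difference of E in I sigma}, substituting the relative quantities $P$ and $P^*$ for the unweighted ones $\mathcal{E}$ and $\mathcal{E}^*$ throughout. First I would invoke Lemma \ref{commutative diagram for P}, which gives $P = P^* \circ f$ on $[0,1)$, so that $P(t) = P^*(f(t))$ for every $t \in I_\sigma$. This identity reduces the study of the oscillation of $P$ on $I_\sigma$ to that of $P^*$ on the image $f(I_\sigma)$, which is contained in the cylinder $\Upsilon_\sigma$.

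The second step is to replace $f(I_\sigma)$ by the full cylinder $\Upsilon_\sigma$. By Lemma \ref{f image of fundamental interval}(\ref{f image of fundamental interval is dense}), the image $f(I_\sigma)$ is dense in $\Upsilon_\sigma$, and by Lemma \ref{P star is continuous} the map $P^*$ is continuous on $(\Sigma, \mathcal{T}_\Sigma)$; since $\Upsilon_\sigma$ is compact by Proposition \ref{main results of Ahn25}(\ref{cylinder set is compact}), the supremum and infimum of $P^*$ over $f(I_\sigma)$ are attained on $\Upsilon_\sigma$ and coincide with the extrema over $\Upsilon_\sigma$. Consequently,
\[
\sup_{t \in I_\sigma} P(t) = \max_{\tau \in \Upsilon_\sigma} P^*(\tau) \quad \text{and} \quad \inf_{t \in I_\sigma} P(t) = \min_{\tau \in \Upsilon_\sigma} P^*(\tau).
\]

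Finally, I would read off the values of these extrema from Lemma \ref{bounds for P star in Upsilon sigma}. In the odd case the maximum is $P^*(\sigma)$ and the minimum is $P^*(\sigma) - \mathcal{L}(I_\sigma)\sum_{0 \leq k \leq n} q_k^*(\sigma)$, while in the even case the roles are reversed; in both cases the difference equals $\mathcal{L}(I_\sigma)\sum_{0 \leq k \leq n} q_k^*(\sigma)$. Since
\[
\sup_{t,u \in I_\sigma} |P(t) - P(u)| = \sup_{t \in I_\sigma} P(t) - \inf_{t \in I_\sigma} P(t),
\]
the desired identity follows immediately. I do not expect any genuine obstacle here: every ingredient has already been established, and the argument is pure bookkeeping. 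The only point requiring a modicum of care is the density-to-compactness transfer in the second step, where one must verify that the extrema are attained rather than merely approached; this is precisely where the continuity of $P^*$ and the compactness of $\Upsilon_\sigma$ enter, in complete parallel with the unweighted case.
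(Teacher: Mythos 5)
Your proposal is correct and follows exactly the route the paper intends: the paper explicitly omits this proof on the grounds that it is obtained from the proof of Lemma \ref{sup of difference of E in I sigma} by replacing $\mathcal{E}, \mathcal{E}^*$ with $P, P^*$ and citing Lemmas \ref{commutative diagram for P}, \ref{f image of fundamental interval}(\ref{f image of fundamental interval is dense}), \ref{P star is continuous}, and \ref{bounds for P star in Upsilon sigma}, which is precisely what you do. Your added remark that continuity of $P^*$ and compactness of $\Upsilon_\sigma$ justify passing from the dense subset $f(I_\sigma)$ to $\Upsilon_\sigma$ only makes explicit a step the paper leaves implicit.
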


Define $G_P \colon \mathbb{R} \to \mathbb{R}^2$ by $G_P (x) \coloneqq (x, P(x))$ for each $x \in \mathbb{R}$, so that $G_P (\mathbb{R}) = \{ ( x, P(x) ) : x \in \mathbb{R} \}$ is the graph of $P$. We now proceed to prove that $\hdim G_P(\mathbb{R}) \leq 3/2$.

\begin{proof}[Proof of Theorem \ref{Hausdorff dimension of graph of P}]
As in the proof of Theorem \ref{Hausdorff dimension of graph of E}, it suffices---by the countable stability property of the Hausdorff dimension (see \cite[pp.~48--49]{Fal14})---to show that $\hdim G_{P} ([0,1)) \leq 3/2$. 

To establish this upper bound, we construct a suitable cover for $G_P([0,1))$. For each $n \in \mathbb{N}$ and $\sigma \in \Sigma_n$, we define the closed interval
\[
K_\sigma \coloneqq 
\left[ \inf_{t \in I_\sigma} P(t), \sup_{t \in I_\sigma} P(t) \right].
\]
A similar argument to that in the proof of Theorem \ref{Hausdorff dimension of graph of E} shows that, for any $n \in \mathbb{N}$, the collection $\mathcal{K} \coloneqq \{ I_\sigma \times K_\sigma : \sigma \in \Sigma_n \}$ covers the set $G \coloneqq G_{P}(\mathbb{I})$. Moreover, each rectangle $I_\sigma \times K_\sigma \in \mathcal{K}$ can be covered by $\sum_{0 \leq k \leq n} q_k^*(\sigma)$ axis-aligned sqaures of side length $\mathcal{L} (I_\sigma)$.

Now, let $\varepsilon > 1/2$ be given. Recall from \eqref{length of I sigma} that $\mathcal{L} (I_\sigma) = 1/[q_n^*(\sigma) (q_n^*(\sigma) + q_{n-1}^*(\sigma))]$. Since $q_0^*(\sigma) \leq q_1^*(\sigma) \leq \dotsb \leq q_n^*(\sigma)$ for any $\sigma \in \Sigma_n$ and $n \in \mathbb{N}$ by Proposition \ref{metric properties}(\ref{metric properties(i)}), it follows that
\begin{align} \label{estimate of Hausdorff measure of G}
\begin{aligned}
\mathcal{H}^{1+\varepsilon}(G)
&\leq \liminf_{n \to \infty} \sum_{\sigma \in \Sigma_n} \left[ (\sqrt{2} \mathcal{L} (I_\sigma))^{1+\varepsilon} \sum_{0 \leq k \leq n} q_k^*(\sigma) \right] \\
&\leq \liminf_{n \to \infty} \sum_{\sigma \in \Sigma_n} \left[ (\sqrt{2} \mathcal{L} (I_\sigma))^{1+\varepsilon} (n+1) q_n^*(\sigma) \right] \\
&= (\sqrt{2})^{1+\varepsilon} \liminf_{n \to \infty} (n+1)b_n,
\end{aligned}
\end{align}
where
\[
b_n \coloneqq \sum_{\sigma \in \Sigma_n} \frac{1}{(q_n^* (\sigma))^\varepsilon (q_n^*(\sigma)+q_{n-1}^*(\sigma))^{1+\varepsilon}}, \quad n \in \mathbb{N}.
\]
By \eqref{summation conversion formula} and a similar calculation to the one used in the proof of Theorem \ref{Hausdorff dimension of graph of E}, we obtain
\begin{align} \label{series of bn}
\begin{aligned}
\sum_{n \geq 1} b_n
&= -2 + \frac{1}{2^{1+\varepsilon}} + 2 \sum_{j \geq 1} \sum_{\substack{0 \leq k < j \\ \gcd(j,k)=1}} \frac{1}{j^{\varepsilon} (j+k)^{1+\varepsilon}} \\
&= -2 + \frac{1}{2^{1+\varepsilon}} + \frac{2}{\zeta (1+2 \varepsilon)} \sum_{j \geq 1} \frac{1}{j^{\varepsilon}} \sum_{j \leq m < 2j} \frac{1}{m^{1+\varepsilon}} \\
&\leq -2 + \frac{1}{2^{1+\varepsilon}} + \frac{2}{\zeta (1+2 \varepsilon)} \sum_{j \geq 1} \frac{1}{j^{\varepsilon}} \left( j \cdot \frac{1}{j^{1+\varepsilon}} \right) \\
&= -2 + \frac{1}{2^{1+\varepsilon}} + \frac{2 \zeta (2 \varepsilon)}{\zeta (1+2 \varepsilon)}.
\end{aligned}
\end{align}
Since $\varepsilon > 1/2$, both $\zeta (2\varepsilon)$ and $\zeta (1+2\varepsilon)$ are finite, so the series $\sum_{n \geq 1} b_n < \infty$ converges. Hence $\liminf_{n \to \infty} (n+1) b_n = 0$. Substituting into \eqref{estimate of Hausdorff measure of G}, we deduce that $\mathcal{H}^{1+\varepsilon}(G) = 0$, and therefore $\hdim G \leq 1 + \varepsilon$. Since $\varepsilon > 1/2$ was arbitrary, we conclude that $\hdim G \leq 1+1/2 = 3/2$. 
\end{proof}

\begin{remarks}
We conclude this paper with two remarks concerning the convergence analysis used in the proof of Theorem \ref{Hausdorff dimension of graph of P}, and how it relates to earlier work by Ridley and Petruska \cite{RP00}. The first remark highlights a limitation in improving the current argument, while the second outlines an alternative proof strategy from the literature.
\begin{enumerate} [label = \upshape(\arabic*), ref = \arabic*, leftmargin=*, widest=2]
\item
We note that
\[
\sum_{j \leq m < 2j} \frac{1}{m^{1+\varepsilon}} \geq \int_j^{2j} \frac{1}{x^{1+\varepsilon}} \, dx = \frac{1}{\varepsilon} \left( 1 - \frac{1}{2^\varepsilon} \right) \frac{1}{j^\varepsilon}.
\]
Then
\[
\sum_{j \geq 1} \frac{1}{j^{\varepsilon}} \sum_{j \leq m < 2j} \frac{1}{m^{1+\varepsilon}} \geq \sum_{j \geq 1} \frac{1}{j^{\varepsilon}} \cdot \frac{1}{\varepsilon} \left( 1 - \frac{1}{2^\varepsilon} \right) \frac{1}{j^\varepsilon} = \frac{1}{\varepsilon} \left( 1 - \frac{1}{2^\varepsilon} \right) \zeta (2 \varepsilon),
\]
Hence, if $\varepsilon \leq 1/2$, then by \eqref{series of bn} and the above estimate, we conclude that $\sum_{n \geq 1} b_n = \infty$. However, this does not necessarily imply $\liminf_{n \to \infty} (n+1) b_n = 0$. (For example, consider $c_n \coloneqq 1/n$, which satisfies $\sum_{n \geq 1} c_n = \infty$, yet $\liminf_{n \to \infty} (n+1) c_n = 1 \neq 0$.) 

Therefore, improving the bounds for $\sum_{n \geq 1} b_n$ alone will not be helpful in future work. If one seeks to establish $\hdim G_P([0,1)) \leq C$ for some $C \in [1,3/2)$, then a more promising direction may be to sharpen the upper bound for $\sum_{0 \leq k \leq n} q_k^*(\sigma)$, which is currently estimated by $\sum_{0 \leq k \leq n} q_k^*(\sigma) \leq (n+1) q_n^*(\sigma)$.

\item
Ridley and Petruska \cite{RP00} obtained the upper bound $3/2$ as follows. (What we present here is not exactly the same as their argument, but the underlying idea is essentially the same.) They used the same covering $\mathcal{K}$ and the same estimate \eqref{series of bn}. Whereas we considered the sum $\sum_{n \geq 1} b_n$, they proved directly that $\liminf_{n \to \infty} (n+1)b_n = 0$.

For the sum defining $b_n$---namely, $b_n = \sum_{\sigma \in \Sigma_n}1/[(q_n^*(\sigma))^\varepsilon (q_n^*(\sigma)+q_{n-1}^*(\sigma))^{1+\varepsilon}]$---they used the Fibonacci lower bound $q_n^*(\sigma) \geq F_{n+1}$ (Proposition \ref{metric properties}(\ref{metric properties(iii)})) and the monotonicity $q_n^*(\sigma) > q_{n-1}^*(\sigma)$ (Proposition \ref{metric properties}(\ref{metric properties(i)})). Furthermore, the pair $(q_n^*, q_{n-1}^*)$ uniquely determines the sequence $\sigma \in \Sigma_n$, since
\[
\frac{q_{n-1}^*(\sigma)}{q_n^*(\sigma)} = [\sigma_n, \sigma_{n-1}, \dotsc, \sigma_1],
\]
where $\sigma = (\sigma_1, \dotsc, \sigma_n) \in \Sigma_n$.

Let $\varepsilon > 1/2$. Summing over all $j \geq F_{n+1}$ and $k \in \{ 1, \dotsc, j-1 \}$, we obtain
\begin{align*}
b_n
&\leq \sum_{j \geq F_{n+1}} \sum_{1 \leq k < j} \frac{1}{j^\varepsilon (j+k)^{1+\varepsilon}} 
= \sum_{j \geq F_{n+1}} \frac{1}{j^\varepsilon} \sum_{1 \leq k < j} \frac{1}{(j+k)^{1+\varepsilon}} \\
&\leq \sum_{j \geq F_{n+1}} \frac{1}{j^\varepsilon} \cdot (j-1) \cdot \frac{1}{(j+1)^{1+\varepsilon}} 
\leq \sum_{j \geq F_{n+1}} \frac{1}{j^{2\varepsilon}} \\
&\leq \int_{F_{n+1}-1}^\infty \frac{1}{x^{2\varepsilon}} \, dx
\leq \int_{F_n}^\infty \frac{1}{x^{2\varepsilon}} \, dx
= \frac{1}{2\varepsilon - 1} \cdot \frac{1}{F_n^{2\varepsilon - 1}}.
\end{align*}
Hence,
\[
(n+1)b_n \leq \frac{n+1}{2\varepsilon - 1} \cdot \frac{1}{F_n^{2\varepsilon - 1}}.
\]
Since $2\varepsilon - 1 > 0$, the Binet formula implies that the right-hand side tends to $0$ as $n \to \infty$. Thus, $\liminf_{n \to \infty} (n+1)b_n = 0$, as desired.

As noted in the introduction, the fact that the same upper bound $\hdim G_P(\mathbb{R}) \leq 3/2$ can be obtained via two distinct approaches lends support to the conjecture $\hdim G_P(\mathbb{R}) = 3/2$. Although the present work does not address a possible lower bound $\hdim G_P(\mathbb{R}) \geq 3/2$, this remains an interesting and open question in the study of fractal geometry and number theory.
\end{enumerate}
\end{remarks}

\section*{Concluding remark}

Although the function $\mathcal{E}(x)$ is of independent interest, our primary motivation stems from the longstanding open question concerning the Hausdorff dimension of the graph of the relative error-sum function $P(x)$. In this sense, our result for $\mathcal{E}$ serves as an analytic foundation for a new approach to studying $P(x)$. We hope this perspective will stimulate further progress on this intriguing problem.

\end{document}